\newtheorem{thm}{Theorem}
\newtheorem*{thm*}{Theorem}
\newtheorem{lem}{Lemma}
\newtheorem{cor}[thm]{Corollary}
\theoremstyle{definition}
\newtheorem{defn}{Definition}
\theoremstyle{remark}
\DeclareMathOperator{\suppo}{supp}
\DeclareMathOperator{\hind}{hind}
\DeclareMathOperator{\conv}{conv}
\DeclareMathOperator{\bd}{bd}
\DeclareMathOperator{\dist}{dist}
\renewcommand{\int}{\mathop{\rm int}}
\renewcommand{\epsilon}{\varepsilon}
\begin{document}

\title{Theorems of Borsuk-Ulam type for flats and common transversals}
\author{R.N.~Karasev}

\thanks{This research was partially supported by the Dynasty Foundation.}

\email{r\_n\_karasev@mail.ru}
\address{
Roman Karasev, Dept. of Mathematics, Moscow Institute of Physics
and Technology, Institutskiy per. 9, Dolgoprudny, Russia 141700}

\begin{abstract}
In this paper some results on the topology of the space of $k$-flats in $\mathbb R^n$ are proved, similar to the Borsuk-Ulam theorem on coverings of sphere. Some corollaries on common transversals for families of compact sets in $\mathbb R^n$, and on measure partitions by hyperplanes, are deduced. 
\end{abstract}

\subjclass[2000]{52A35,55M30,55M35}
\keywords{Borsuk-Ulam theorem, common transversals, Helly-type theorems}

\maketitle

\section{Introduction}

Let us remind some classical results, which are generalized in this paper. One of the most important results is the Borsuk-Ulam theorem on coverings of the sphere~\cite{bor1933}.

\begin{thm*}[The Borsuk-Ulam theorem]
If the sphere $S^n$ is covered by a family of $n+1$ closed (or open) sets $X_1,\ldots,X_{n+1}$, then at least one of $X_i$ contains a pair of antipodal points of $S^n$.
\end{thm*}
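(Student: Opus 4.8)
The plan is to deduce this covering version from the \emph{mapping} form of the Borsuk--Ulam theorem: for every continuous $f\colon S^n\to\mathbb R^n$ there is a point $x$ with $f(x)=f(-x)$. The reduction is elementary; the genuine topological content sits in the mapping form, which I would establish by the standard degree argument (or simply invoke as classical).

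First, granting the mapping form, suppose $X_1,\dots,X_{n+1}$ are closed and cover $S^n$, and set $f(x)=\bigl(\dist(x,X_1),\dots,\dist(x,X_n)\bigr)$, a continuous map $S^n\to\mathbb R^n$. Pick $x_0$ with $f(x_0)=f(-x_0)$, so $\dist(x_0,X_i)=\dist(-x_0,X_i)$ for every $i\le n$. If $x_0\in X_i$ for some $i\le n$, then $\dist(-x_0,X_i)=0$, and closedness of $X_i$ gives $-x_0\in X_i$ --- an antipodal pair. Otherwise $x_0,-x_0\notin X_1\cup\dots\cup X_n$, so both lie in $X_{n+1}$, again an antipodal pair. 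The open case reduces to this one: given an open cover $X_1,\dots,X_{n+1}$, a closed shrinking $Y_i\subseteq X_i$ covering $S^n$ exists by compactness (e.g.\ $Y_i=\{x:\dist(x,S^n\setminus X_i)\ge 1/m\}$ for $m$ large), and the closed case applied to the $Y_i$ produces a $Y_i$, hence an $X_i$, containing two antipodes. Conversely, the closed case implies the open one by thickening each $X_i$ to its $\epsilon$-neighbourhood and letting $\epsilon\to0$, extracting a convergent subsequence of antipodal pairs by compactness of $S^n$; so the two versions are equivalent and either one suffices.

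It remains to justify the mapping form. If $f(x)\ne f(-x)$ for all $x$, then $g(x)=\dfrac{f(x)-f(-x)}{\|f(x)-f(-x)\|}$ is a continuous \emph{odd} map $S^n\to S^{n-1}$, so it is enough to rule such maps out. I would argue by induction on $n$: the restriction of an odd map $S^n\to S^{n-1}$ to an equatorial $S^{n-1}\subset S^n$ is an odd self-map of $S^{n-1}$, which by Borsuk's theorem has odd --- in particular nonzero --- degree and hence is not nullhomotopic; but that restriction extends over a closed hemisphere $\cong D^n$ and is therefore nullhomotopic, a contradiction, the base case $n=1$ being the intermediate value theorem. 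The one nontrivial ingredient, and the main obstacle, is Borsuk's statement that an odd self-map of a sphere has odd degree; this I would take from the literature, or prove by a $\mathbb Z/2$-cohomology computation (equivalently, via Stiefel--Whitney classes of the line bundle over $\mathbb{RP}^{n-1}$ associated with the odd map).
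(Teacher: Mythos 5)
Your argument is correct, but note that the paper offers no proof of this statement at all: it is quoted in the introduction as classical background with a citation to Borsuk's 1933 paper, so there is nothing internal to compare against line by line. Your reduction is the standard one --- distance functions turn the covering statement into the mapping form $f(x)=f(-x)$, and normalizing $f(x)-f(-x)$ turns the mapping form into the nonexistence of an odd map $S^n\to S^{n-1}$ --- and it is sound: in the closed case the equal distances force $-x_0\in X_i$ whenever $x_0\in X_i$ with $i\le n$, and force $-x_0\notin X_1\cup\dots\cup X_n$ whenever $x_0\notin X_1\cup\dots\cup X_n$ (the distances are then strictly positive), so both points land in $X_{n+1}$; the shrinking argument settles the open case (your sentence beginning ``Conversely'' states the thickening implication backwards --- it derives the closed case from the open one --- but it is redundant anyway). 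The one genuinely topological ingredient you outsource, the nonexistence of an odd map $S^n\to S^{n-1}$ (equivalently Borsuk's odd-degree theorem), is exactly what the paper's own machinery supplies: $\hind S^n=n$ together with the monotonicity of the cohomological index (Lemma~\ref{index-bu}) forbids any equivariant map $S^n\to S^{n-1}$, and your construction $g(x)=\bigl(f(x)-f(-x)\bigr)/\|f(x)-f(-x)\|$ is precisely the device, implemented there with partitions of unity, that the paper uses in the proof of Theorem~\ref{antipcover} to generalize this covering theorem to arbitrary $Z_2$-spaces of index $n$. So your proposal is a correct and more elementary route to the classical special case, with its hard core either cited from the literature or replaceable by the paper's index argument.
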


Note that the sphere $S^n$ is considered to be a unit sphere in $\mathbb R^{n+1}$, and the points $x$ and $-x$ are called antipodal. This theorem can be reformulated for coverings of a ball.

\begin{thm*}[The Borsuk-Ulam theorem for coverings of the ball]
Let a ball $B^n\in\mathbb R^n$ be covered by closed (or open) sets $X_1,\ldots,X_{n+1}$, and for any $i=1,\ldots, n+1$ the intersection $X_i\cap\partial B^n$ does not contain a pair of antipodal points. Then the intersection $\bigcap_{i=1}^{n+1} X_i$ is non-empty.
\end{thm*}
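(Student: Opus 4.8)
The plan is to argue by contradiction. Suppose $\bigcap_{i=1}^{n+1} X_i = \emptyset$; I will derive a contradiction with the hypothesis on $\partial B^n$ using the Borsuk–Ulam theorem. Since the $X_i$ are closed with empty intersection, $\sum_{j=1}^{n+1}\dist(x,X_j) > 0$ for every $x\in B^n$, so
\[
f_i(x) \;=\; \frac{\dist(x,X_i)}{\sum_{j}\dist(x,X_j)}
\]
defines a continuous map $f = (f_1,\dots,f_{n+1})\colon B^n \to \Delta^n$ into the standard $n$-simplex, with $f_i(x) = 0$ exactly when $x\in X_i$. As the $X_i$ cover $B^n$, every point $f(x)$ has a vanishing coordinate, so $f$ takes values in $\partial\Delta^n$; composing with the radial homeomorphism $\partial\Delta^n \to S^{n-1}$ centred at the barycenter of $\Delta^n$ yields a continuous map $F\colon B^n \to S^{n-1}$. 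Write $g := F|_{\partial B^n}\colon S^{n-1}\to S^{n-1}$, identifying $\partial B^n$ with $S^{n-1}$.

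Next I would read the antipodal hypothesis off $g$. If $g(x) = g(-x)$ for some $x \in \partial B^n$, then $f(x) = f(-x)$, and choosing $i$ with $f_i(x) = 0$ gives $f_i(-x) = 0$, i.e.\ $x, -x \in X_i$; since $x,-x\in\partial B^n$ this is a pair of antipodal points of $X_i\cap\partial B^n$, which is forbidden. Hence $g(x)\neq g(-x)$ for all $x\in\partial B^n$.

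The key step is now to deform $g$ to an antipodal map. The formula
\[
H_t(x) \;=\; \frac{g(x) - t\,g(-x)}{\lVert g(x) - t\,g(-x)\rVert},\qquad t\in[0,1],
\]
is well defined, since the denominator can vanish only if $\lVert g(x)\rVert = t\,\lVert g(-x)\rVert$, forcing $t=1$ and $g(x)=g(-x)$, already excluded; it is a homotopy from $H_0 = g$ to $H_1(x) = (g(x)-g(-x))/\lVert g(x)-g(-x)\rVert$, which is antipodal: $H_1(-x) = -H_1(x)$. By the Borsuk–Ulam theorem, in the classical form that an antipodal self-map of $S^{n-1}$ is not null-homotopic, the map $H_1$, and hence $g$, is essential. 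But $g$ extends to $F$ over the contractible ball $B^n$, so $g$ is null-homotopic. This contradiction shows $\bigcap_{i=1}^{n+1} X_i \neq\emptyset$.

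I expect the step demanding the most care to be the passage from the combinatorial antipodal condition to the sphere-valued map: it yields only $g(x)\neq g(-x)$ on $\partial B^n$, not an honest antipodal map, so the homotopy $H_t$ to an odd map is genuinely needed — one cannot apply Borsuk–Ulam to $F$ directly. A less slick alternative is to double $B^n$ to a sphere $S^n$ and manufacture from the $X_i$ a covering of $S^n$ by $n+1$ closed sets, none containing a pair of antipodal points, contradicting the covering form of Borsuk–Ulam; the awkward part of that route is arranging such a covering coherently across the equatorial copy of $\partial B^n$, which the nerve-map argument sidesteps.
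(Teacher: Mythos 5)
Your argument is correct for closed sets, and it is a genuinely different route from anything in the paper: the paper states this ball version only as classical background in the introduction and never proves it, while its own machinery for such statements (Theorem~\ref{antipcover}, specialized via Theorem~\ref{bgcover}) runs through a partition of unity, an equivariant map to a sphere, and the cohomological $Z_2$-index together with the Thom-class non-extension argument of Theorem~\ref{equimaps}. You instead build the distance/nerve map $f\colon B^n\to\partial\Delta^n$ from the assumption $\bigcap_i X_i=\emptyset$, pass to $F\colon B^n\to S^{n-1}$, observe that the antipodality hypothesis gives only $g(x)\neq g(-x)$ on $\partial B^n$, and then correctly supply the homotopy $H_t$ to an odd map, so that classical Borsuk--Ulam (odd self-maps of $S^{n-1}$ are essential) contradicts the extension of $g$ over the ball; all the delicate points (non-vanishing of the denominators, $f_i(x)=0\Leftrightarrow x\in X_i$ for closed $X_i$, oddness of $H_1$) are handled. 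What your elementary approach buys is brevity and independence from equivariant cohomology; what the paper's index approach buys is generality, since the same scheme proves the stronger statements for arbitrary compact $Z_2$-spaces of index $n$, for coverings of $B(X)$, and the ``partition'' refinements used throughout the paper. One small omission: the theorem also allows open sets $X_i$, and your distance-function construction uses closedness (via $f_i(x)=0$ iff $x\in X_i$); the open case needs a routine extra step, e.g.\ a closed shrinking of the open cover, or replacing the $f_i$ by a partition of unity with $\{\phi_i>0\}\subseteq X_i$, after which the same homotopy argument applies verbatim.
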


In this paper we are going to consider the configuration spaces of $k$-flats ($k$-dimensional affine subspaces) in $\mathbb R^n$, and prove the generalizations of the Borsuk-Ulam theorem for coverings of such spaces. 

Let us state another famous theorem about intersections of convex sets in $\mathbb R^n$, the Helly theorem~\cite{helly1923}.

\begin{thm*}[Helly's theorem]
Let $\mathcal F$ be a finite family of convex sets in $\mathbb R^n$. The family $\mathcal F$ has a common points, iff any subfamily $\mathcal G\subseteq \mathcal F$ with size $|\mathcal G|\le n+1$ has a common point.
\end{thm*}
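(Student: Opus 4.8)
\emph{Proof proposal.} The converse implication is trivial, so the plan is to prove that if every subfamily of size at most $n+1$ has a common point, then the whole family does, by induction on $m=|\mathcal F|$. For $m\le n+1$ there is nothing to prove, so I would assume $m\ge n+2$ and that the statement holds for every family of $m-1$ convex sets.

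Write $\mathcal F=\{C_1,\dots,C_m\}$. For each $i$, the family $\mathcal F\setminus\{C_i\}$ has $m-1\ge n+1$ members, and each of its $(n+1)$-element subfamilies is a subfamily of $\mathcal F$ and hence has a common point; by the inductive hypothesis there is a point $p_i\in\bigcap_{j\ne i}C_j$. This produces $m\ge n+2$ points $p_1,\dots,p_m$ in $\mathbb R^n$. The key auxiliary fact is Radon's lemma: any $n+2$ points of $\mathbb R^n$ can be split into two groups indexed by disjoint sets $I,J$ with $\conv\{p_i:i\in I\}\cap\conv\{p_j:j\in J\}\ne\emptyset$. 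I would derive this from a nontrivial affine dependence $\sum_i\lambda_ip_i=0$, $\sum_i\lambda_i=0$ among the $n+2$ points (such a relation exists since $n+2$ points in $\mathbb R^n$ are affinely dependent), putting $I=\{i:\lambda_i>0\}$ and $J=\{i:\lambda_i\le 0\}$ and normalizing the two sides of $\sum_{i\in I}\lambda_ip_i=\sum_{j\in J}(-\lambda_j)p_j$ by their common positive sum.

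Let $x$ lie in $\conv\{p_i:i\in I\}\cap\conv\{p_j:j\in J\}$. For any index $k$, either $k\in I$ or $k\in J$; say $k\in J$. Then $k\ne i$ for every $i\in I$, so $p_i\in C_k$ for all $i\in I$, and convexity of $C_k$ gives $\conv\{p_i:i\in I\}\subseteq C_k$, hence $x\in C_k$; the case $k\in I$ is symmetric. Therefore $x\in\bigcap_{k=1}^m C_k$, which closes the induction. The argument is entirely combinatorial — no topology or compactness is needed because $\mathcal F$ is finite — and the only points demanding attention are the cardinality bookkeeping in the inductive step (to guarantee $m-1\ge n+1$) and the short proof of Radon's lemma, neither of which is a genuine obstacle.
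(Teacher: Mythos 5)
Your argument is the standard Radon-based proof of Helly's theorem and it is essentially correct; note that the paper itself does not prove this statement at all --- it quotes it as a classical background result with a reference to Helly's original work --- so there is no ``paper proof'' to compare against, and an elementary self-contained argument like yours is exactly what one would supply. The induction bookkeeping is fine ($m-1\ge n+1$, and every $(n+1)$-subfamily of $\mathcal F\setminus\{C_i\}$ is a subfamily of $\mathcal F$), and your derivation of Radon's lemma from a nontrivial affine dependence is correct.

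One small point deserves a word when $m>n+2$: as written, you state Radon's lemma for $n+2$ points but then assert ``for any index $k$, either $k\in I$ or $k\in J$,'' which is false if the splitting was performed only on some $n+2$ of the points $p_1,\dots,p_m$. There are two immediate fixes, either of which you should make explicit: (a) apply your affine-dependence construction to all $m$ points at once (any $m\ge n+2$ points are affinely dependent, and your choice $I=\{i:\lambda_i>0\}$, $J=\{i:\lambda_i\le 0\}$ then makes $I\cup J$ the full index set, with both parts nonempty since the coefficients sum to zero and are not all zero); or (b) keep the splitting on $n+2$ points and observe that for an index $k$ outside those $n+2$ points one has $k\ne i$ for every $i\in I$, so $p_i\in C_k$ for all $i\in I$ and hence $x\in\conv\{p_i:i\in I\}\subseteq C_k$ anyway. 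With either one-line addition the proof is complete.
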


We are going to use the generalization of the Helly theorem from~\cite{bar1982}.

\begin{thm*}[The colored Helly theorem]
Let $\mathcal F_1, \ldots,\mathcal F_{n+1}$ be families of convex compact sets in $\mathbb R^n$. Suppose that for any system of representatives $\{X_i\in\mathcal F_i\}_{i=1}^{n+1}$ the intersection $\bigcap_{i=1}^{n+1} X_i$ is non-empty. Then for some $i$ the intersection $\bigcap\mathcal F_i$ is non-empty.
\end{thm*}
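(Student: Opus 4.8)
The plan is to argue by contradiction and reduce the statement to the colorful Carathéodory theorem of Bárány (the companion result in \cite{bar1982}), via the standard homogenization that turns intersections of convex sets into conic duality. First I would reduce to finite families. If $\bigcap\mathcal F_i=\emptyset$ for every $i$, then, the members of $\mathcal F_i$ being compact, the finite intersection property fails for $\mathcal F_i$, so some finite subfamily $\mathcal F_i'\subseteq\mathcal F_i$ already has empty intersection; the finitely many families $\mathcal F_i'$ still satisfy the hypothesis (shrinking a family only shrinks its set of representatives), and still none of them has a common point, so it suffices to produce a contradiction when all $\mathcal F_i$ are finite. (If some $\mathcal F_i$ is empty the conclusion $\bigcap\mathcal F_i\ne\emptyset$ is immediate, and empty members may be discarded.)

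Next I would homogenize. For a compact convex $X\subseteq\mathbb R^n$ set $\widehat X=\{(tx,t):x\in X,\ t\ge 0\}\subseteq\mathbb R^{n+1}$, a closed convex cone lying in the half-space $\{x_{n+1}\ge 0\}$ and meeting $\{x_{n+1}=0\}$ only at the origin (since $X$ is bounded). For a finite family one has $\bigcap_jX_j=\emptyset$ in $\mathbb R^n$ iff $\bigcap_j\widehat{X_j}=\{0\}$ in $\mathbb R^{n+1}$, and by conic duality this holds iff the polar cones $\widehat{X_j}^{\,\circ}$ positively span $\mathbb R^{n+1}$, i.e.\ their union lies in no closed half-space. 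A crucial feature is that $(0,\dots,0,-1)$ belongs to every $\widehat X^{\,\circ}$, so this ``south pole'' is a generator available to all the cones at once. Applying the ``only if'' direction colorwise: assuming $\bigcap\mathcal F_i=\emptyset$ for every $i$, we get, for each color $i$, that $\bigcup_{X\in\mathcal F_i}\widehat X^{\,\circ}$ lies in no closed half-space; equivalently, writing $S_i=\bigcup_{X\in\mathcal F_i}(\widehat X^{\,\circ}\cap S^n)\subseteq S^n$, the set $S_i$ is contained in no closed hemisphere (so $0\in\int\conv S_i$) and contains the south pole.

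The heart of the matter is then a \emph{colorful selection}: choose one representative $X_i\in\mathcal F_i$ for each $i=1,\dots,n+1$ so that $\bigcup_{i=1}^{n+1}\widehat{X_i}^{\,\circ}$ still lies in no closed half-space of $\mathbb R^{n+1}$; by the duality above this says exactly $\bigcap_{i=1}^{n+1}X_i=\emptyset$, contradicting the hypothesis that every system of representatives has a common point. This is precisely what colorful Carathéodory is designed to provide — from the data ``for every color the cones of that color collectively surround the origin'' one wants ``one cone per color that still surrounds the origin.'' The one point requiring care is the dimension count: colorful Carathéodory over $\mathbb R^d$ uses $d+1$ color classes, while here we have $n+1$ classes in $\mathbb R^{n+1}$. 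This is exactly where the common south pole is used — it furnishes the missing ``free'' color — and where the main difficulty of the argument sits, namely to ensure that the extra degree of freedom coming from the fact that all $\widehat X$ lie in one half-space is the one that gets absorbed, rather than an auxiliary ``north pole'' direction not present in any of the cones. Once such $X_1,\dots,X_{n+1}$ are selected, we conclude $\bigcap_iX_i=\emptyset$ and the proof is complete.

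I expect the colorful-selection step, with this dimension bookkeeping, to be the real obstacle; a generic perturbation of the families (so that the selected ``rainbow'' becomes a genuine full-dimensional simplex around the origin) together with a routine limiting argument to remove the perturbation should be the technical backbone there. For comparison, a more hands-on route avoids conic duality by a variational argument: minimize $x\mapsto\max_i\max_{X\in\mathcal F_i}\dist(x,X)$, which is convex, coercive, and (since each $\bigcap\mathcal F_i=\emptyset$) strictly positive, then read off from the optimality conditions and Carathéodory's theorem a subfamily of at most $n+1$ sets with empty intersection whose supporting half-spaces have outer normals combining positively to zero. But the same obstruction resurfaces in that language — one must guarantee that this extracted subfamily uses one set of \emph{each} color, not several of the same color — so one is again led back to a colorful Carathéodory type input.
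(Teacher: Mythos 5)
This statement is quoted in the paper from \cite{bar1982} (where it is Lov\'asz's theorem) and is not proved there, so your proposal has to stand on its own. Your reductions are fine: passing to finite families by compactness, and the homogenization with the equivalence ``$\bigcap_j X_j=\emptyset$ iff the polar cones $\widehat X_j^{\,\circ}$ positively span $\mathbb R^{n+1}$''. The genuine gap is exactly the step you yourself flag as ``the real obstacle'': the convex colorful Carath\'eodory theorem in $\mathbb R^{n+1}$ requires $n+2$ colour classes, \emph{each} of which must capture the target point in its convex hull, and your proposed $(n+2)$-nd class, the common south pole $\{e^-\}=\{(0,\dots,0,-1)\}$, fails this hypothesis, since $0\notin\conv\{e^-\}$; if you enlarge it to $\{e^-,e^+\}$ the theorem is entitled to select $e^+$, which lies in none of the cones and produces precisely the vacuous relation you worry about. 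There is also a secondary leak: points of $\widehat X^{\,\circ}$ on the boundary surface $s=-h_X(y)$ only yield the non-strict certificate $\sum_j\lambda_j h_{X_j}(a_j)\le 0$, which does not imply $\bigcap_j X_j=\emptyset$ (two compacta touching at a single point satisfy it); an $\varepsilon$-fattening of the sets repairs the strictness, but no perturbation repairs the colour count. So as written the central selection step is not proved, and the proposal is not yet a proof.

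The argument can in fact be closed along your lines, but the correct key lemma is the \emph{conic} (positive-hull) version of colorful Carath\'eodory, which needs only $d$ colour classes in $\mathbb R^d$: if $p\ne 0$ lies in $\mathrm{pos}(S_i)$ for $i=1,\dots,d$, then some rainbow selection $x_i\in S_i$ has $p\in\mathrm{pos}\{x_1,\dots,x_d\}$. It is proved by the same distance-decreasing swap as the convex version, using that the nearest point of a cone generated by $d$ points lies in the subcone spanned by at most $d-1$ of them, which frees a colour; the relevant separating hyperplane can be taken through the origin. Apply it with $d=n+1$, target $p=e^+=(0,\dots,0,1)$ and $S_i=\bigcup_{X\in\mathcal F_i}\widehat X^{\,\circ}$: since every cone $\widehat X^{\,\circ}$ contains all points below the graph of $-h_X$, one has $e^+\in\mathrm{pos}(S_i)$ iff $\mathrm{pos}(S_i)=\mathbb R^{n+1}$ iff $\bigcap\mathcal F_i=\emptyset$, and a rainbow $e^+\in\mathrm{pos}\{c_1,\dots,c_{n+1}\}$ with $c_i\in\widehat{X_i}^{\,\circ}$ gives the strict certificate $\sum_i\lambda_i h_{X_i}(a_i)<0$, i.e.\ $\bigcap_i X_i=\emptyset$, the desired contradiction. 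For comparison, the standard proof (Lov\'asz's) is a direct variational one with no Carath\'eodory input: assuming every rainbow selection meets, take for each the lowest point (in a generic direction) of its intersection, choose the rainbow selection whose lowest point is highest, and show that this point lies in every member of one of the colour classes.
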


The Helly theorem gives a condition for existence of a common point for the whole family in terms of existence of a common point for subfamilies of given size. It also makes sense to find a $k$-flat that intersects every set in the family, and try to find some sufficient conditions on its existence. Let us make a definition.

\begin{defn}
If a $k$-flat $L$ intersects every set of the family $\mathcal F$, then $L$ is called \emph{(common) $k$-transversal} for the family $\mathcal F$.
\end{defn}

In fact, the existence of $k$-transversal for arbitrary finite family of convex sets cannot be deduced from any Helly-type theorem. Thus it makes sense to modify the conditions in Helly-type theorems to provide a common transversal, see~\cite{dol1993,cgppsw1994} for example. 

Let us state a result on common transversals from~\cite{horn1949,klee1951}, that generalizes the Helly theorem.

\begin{thm*}[The Horn-Klee theorem] 
Let $1\le k\le d$ be integers, let $\mathcal F$ be a family of convex compact sets in $\mathbb R^d$. Then the following conditions are equivalent:

1) Every $k$ or less sets of $\mathcal F$ have a common point;

2) Every flat of codimension $k-1$ in $\mathbb R^d$ can be translated to intersect every member of $\mathcal F$;

3) Every flat of codimension $k$ in $\mathbb R^d$ is contained in a flat of codimension $k-1$, that is a transversal for $\mathcal F$.
\end{thm*}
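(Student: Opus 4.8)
The strategy is to prove $1)\Rightarrow 2)$, $2)\Rightarrow 1)$, $1)\Rightarrow 3)$, $3)\Rightarrow 1)$, so that $1)$ becomes equivalent to each of $2)$ and $3)$. Two tools carry most of the weight: orthogonal projection combined with Helly's theorem for the forward implications, and a separation lemma for the converses. For $1)\Rightarrow 2)$: if $F$ is a flat of codimension $k-1$ with direction $V$ ($\dim V=d-k+1$) and $\pi_V$ is the orthogonal projection onto $V^\perp\cong\mathbb R^{k-1}$, then $F+v$ meets $X$ exactly when $\pi_V(v)\in\pi_V(X)$, so $F$ has a transversal translate iff $\bigcap_{X\in\mathcal F}\pi_V(X)\ne\emptyset$. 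The $\pi_V(X)$ are convex compact in $\mathbb R^{k-1}$, so by Helly's theorem it suffices that any $k$ of them meet; and a common point $p$ of $X_1,\dots,X_k$ projects to a common point $\pi_V(p)$ of $\pi_V(X_1),\dots,\pi_V(X_k)$.

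For the converses I would use the following Lemma: if $X_1,\dots,X_j$ are convex compact in $\mathbb R^d$ with $\bigcap X_i=\emptyset$ and $j\le d+1$, then some linear projection $\pi\colon\mathbb R^d\to\mathbb R^{j-1}$ satisfies $\bigcap\pi(X_i)=\emptyset$. To prove it, minimise $g(x)=\max_i\dist(x,X_i)$; its minimum $\rho$ is positive (else a common point exists), attained at $x^*$. Letting $I$ be the active index set, $p_i\in X_i$ the point of $X_i$ nearest $x^*$ and $u_i=(p_i-x^*)/\rho$, first-order optimality gives $0\in\conv\{u_i:i\in I\}$, hence $U:=\lin\{u_i:i\in I\}=\aff\{u_i:i\in I\}$ has $\dim U\le|I|-1\le j-1$. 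Enlarge $U$ to a $(j-1)$-dimensional subspace $\widehat U$ and take $\pi=\pi_{\widehat U}$. If $y\in\bigcap\pi(X_i)$, pick $x_i\in X_i$ with $\pi(x_i)=y$; the supporting halfspace of $X_i$ at $p_i$ is $\langle z-x^*,u_i\rangle\ge\rho$, and since $u_i\in\widehat U$ this yields $\langle y-\pi(x^*),u_i\rangle\ge\rho$ for all $i\in I$, contradicting $\sum_{i\in I}\lambda_iu_i=0$ with $\lambda_i\ge0$, $\sum\lambda_i=1$.

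Now $2)\Rightarrow 1)$: contrapositively, if $j\le k$ sets $X_1,\dots,X_j$ of $\mathcal F$ have empty intersection, the Lemma (taking $\dim\widehat U=k-1$, possible since $j-1\le k-1\le d$) gives a surjection $\mathbb R^d\to\mathbb R^{k-1}$ with kernel $W$, $\dim W=d-k+1$, whose images of the $X_i$ are disjoint; then no translate of a flat with direction $W$ meets all $X_i$, contradicting $2)$. For $3)\Rightarrow 1)$ one proceeds similarly but must pass to the quotient $\mathbb R^d/\vec G\cong\mathbb R^k$: choose the codimension-$k$ flat $G$ so that this $W$ becomes, in $\mathbb R^k$, the kernel of a projection $\rho_e$ onto $\mathbb R^{k-1}$ along a direction $e$ with $\bigcap\rho_e(Y_i)=\emptyset$ ($Y_i$ the images of the $X_i$); a short compactness estimate then shows that a point $\bar g$ far enough along $e$ lies on no line meeting all the $Y_i$ (lines through a remote point are nearly parallel to $e$, so such a line would force $\rho_e(Y_i)$ to share a nearly-common value), so $G$ lies in no codimension-$(k-1)$ transversal, contradicting $3)$.

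The implication $1)\Rightarrow 3)$ is the hard one. Given a codimension-$k$ flat $G$ with $\vec G\ne0$, projecting along a nonzero vector of $\vec G$ drops $d$ by one, preserves hypothesis $1)$, and carries transversals back and forth via preimages; iterating reduces everything to the balanced case $d=k$, where $G$ is a point $p$ and one must find a line through $p$ meeting every member of a family in $\mathbb R^k$ in which every $k$ sets have a common point. I would attack this via the "transversal field": for $v\in S^{k-1}$ put $T(v)=\bigcap_\alpha\pi_v(Y_\alpha)\subseteq v^\perp$, which is nonempty, convex and compact by the already-proved implication $1)\Rightarrow 2)$ applied in $\mathbb R^k$, and let $\tau(v)\in v^\perp$ be the point of $T(v)$ nearest $0$; since $\pi_{-v}=\pi_v$ one has $\tau(-v)=\tau(v)$, so $\tau$ is a continuous even vector field on $S^{k-1}$, and $\tau(v)=0$ means precisely that the line through $p$ in direction $v$ is a transversal. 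The main obstacle is producing such a zero: it is automatic for $k$ odd (hairy ball theorem) and for $k=2$ (the associated scalar height function is odd, hence vanishes by the intermediate value theorem), but for general even $k$ an even vector field need not vanish, so one must exploit the convex geometry of the sets $T(v)$ beyond the bare field — e.g. the hyperplane through $\tau(v)$ orthogonal to $\tau(v)$ that separates $0$ from $T(v)$ — to exclude a nowhere-zero $\tau$. Getting this step to work in all dimensions is the crux, and is exactly where the strength of hypothesis $1)$ over mere nonemptiness of $\bigcap\mathcal F$ is used.
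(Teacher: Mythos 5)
This statement is quoted in the paper from Horn and Klee and is not proved there, so your argument has to stand on its own. The parts you carry out in full are correct: $1)\Rightarrow 2)$ by projecting onto a $(k-1)$-dimensional complement and applying Helly (for an infinite family one also needs the routine compactness extension of Helly, which your compactness hypothesis supplies); your projection lemma is proved correctly (the optimality condition $0\in\conv\{u_i:i\in I\}$ together with the supporting half-space inequality $\langle z-x^*,u_i\rangle\ge\rho$ does rule out a common point of the projections onto any subspace containing the $u_i$), and it does yield $2)\Rightarrow 1)$ and, with the quotient-plus-compactness argument, $3)\Rightarrow 1)$.

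The genuine gap is the one you flag yourself: $1)\Rightarrow 3)$. The reduction to the balanced case $d=k$ (a line through a prescribed point) is fine, but the remaining step is the topological heart of the theorem and you do not supply it. Two comments on your sketch. Your pessimism about even $k$ is misplaced: a field $\tau$ with $\tau(v)\in v^\perp$ and $\tau(-v)=\tau(v)$ is precisely a section of the rank-$(k-1)$ bundle $\xi^\perp$ over $\mathbb RP^{k-1}$ ($\xi$ the tautological line bundle), and since $w_{k-1}(\xi^\perp)=w^{k-1}\neq 0$, every continuous such section has a zero; so the evenness you observed is exactly the Borsuk--Ulam-type leverage needed, in every dimension. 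The real defect is continuity: $T(v)=\bigcap_\alpha\pi_{v^\perp}(Y_\alpha)$ is only upper semicontinuous in $v$ (an intersection of continuously varying convex compacta can collapse abruptly), so the nearest-point selection $\tau$ may jump, and a merely upper semicontinuous convex-valued map admits no continuous selection in general. Hence, as written, the crucial field need not exist and the implication remains unproved. The approach is salvageable --- e.g.\ replace $\tau$ by a continuous approximate selection of the upper semicontinuous map $T$ (symmetrized to lie in $v^\perp$ and be even), apply the vanishing of sections of $\xi^\perp$, and pass to the limit using compactness of the $Y_\alpha$; or avoid selections altogether by a covering and partition-of-unity argument on the space of directions, in the spirit of Theorem~\ref{antipcover} --- but none of this is in your text, so the chain of equivalences is not closed.
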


In Section~\ref{hellytype} we prove some results on common transversals, close to the colored Helly theorem and the Horn-Klee theorem. In Section~\ref{buplanes} we prove some Borsuk-Ulam-type theorems, that give sufficient conditions for existence of a common $k$-transversal for $n+1$-element families of (possibly non-convex) sets in $\mathbb R^n$.

Let us state the result on measure partitions from~\cite{st1942,ste1945}, that can be deduced from the Borsuk-Ulam theorem.

\begin{thm*}[The ``ham sandwich'' theorem]
Suppose that $d$ absolutely continuous probabilistic measures $\mu_1,\ldots,\mu_d$ are given in $\mathbb R^d$. Then there is a half-space $H\subset\mathbb R^d$ such that for any $i=1,\ldots,d$
$$
\mu_i(H) = 1/2.
$$
\end{thm*}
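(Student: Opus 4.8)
The plan is to encode closed half-spaces of $\mathbb R^d$ by points of the sphere $S^d$ and to reduce the statement to the Borsuk--Ulam theorem. View $\mathbb R^d$ as the affine hyperplane $\{y_0=1\}$ in $\mathbb R^{d+1}$, and to each unit vector $v=(v_0,v_1,\ldots,v_d)\in S^d$ assign
$$
H_v=\{x\in\mathbb R^d:\ v_0+v_1x_1+\cdots+v_dx_d\ge 0\}.
$$
For $v\ne(\pm 1,0,\ldots,0)$ its topological boundary is an affine hyperplane of $\mathbb R^d$, and in all cases $H_{-v}$ is the closure of $\mathbb R^d\setminus H_v$. Since each $\mu_i$ is absolutely continuous, every affine hyperplane is $\mu_i$-null; hence $\mu_i(H_v)+\mu_i(H_{-v})=1$ for all $i$ and all $v\in S^d$.

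Next I would verify that $f\colon S^d\to\mathbb R^d$, $f(v)=\bigl(\mu_1(H_v),\ldots,\mu_d(H_v)\bigr)$, is continuous. If a sequence $v^{(m)}$ tends to $v$, then at every point $x$ outside the (possibly empty) boundary hyperplane of $H_v$ the indicators $\mathbf{1}_{H_{v^{(m)}}}(x)$ eventually equal $\mathbf{1}_{H_v}(x)$; as that hyperplane is $\mu_i$-null, dominated convergence gives $\mu_i(H_{v^{(m)}})\to\mu_i(H_v)$. Therefore the map
$$
g(v)=f(v)-f(-v)=\bigl(2\mu_1(H_v)-1,\ \ldots,\ 2\mu_d(H_v)-1\bigr)
$$
is continuous and odd, $g(-v)=-g(v)$, and a zero of $g$ is precisely a half-space $H_v$ with $\mu_i(H_v)=1/2$ for every $i$. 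So it remains to find $v\in S^d$ with $g(v)=0$.

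Finally I would derive from the stated Borsuk--Ulam theorem that an odd map $S^d\to\mathbb R^d$ must vanish somewhere. If $g$ had no zero, then $v\mapsto g(v)/\|g(v)\|$ would be a continuous odd map $S^d\to S^{d-1}$. Pick a covering of $S^{d-1}$ by $d+1$ closed sets none of which contains a pair of antipodal points (for instance, the radial projections to $S^{d-1}$ of the facets of a regular $d$-simplex centered at the origin); pulling it back by this odd map yields a covering of $S^d$ by $d+1$ closed sets with the same property, contradicting the Borsuk--Ulam theorem for $S^d$. Hence $g$ vanishes at some $v$, and $H_v$ is the required half-space. The only delicate point is the measure-theoretic bookkeeping in the first two paragraphs, where absolute continuity of the $\mu_i$ is used both for the complementation identity and for the continuity of $f$; the topological content is exactly the Borsuk--Ulam theorem as stated, so no further difficulty arises.
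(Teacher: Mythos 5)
Your argument is correct, and it is essentially the approach the paper itself points to: the ham sandwich theorem is quoted there as a classical result that "can be deduced from the Borsuk--Ulam theorem," and your proof is exactly that standard deduction -- parametrize closed half-spaces by $S^d$ via homogenization, check continuity of $v\mapsto(\mu_1(H_v),\ldots,\mu_d(H_v))$ using that hyperplanes are null for absolutely continuous measures, and find a zero of the odd map $g$. Your reduction of "an odd map $S^d\to\mathbb R^d$ vanishes" to the covering form of Borsuk--Ulam (pulling back a cover of $S^{d-1}$ by $d+1$ closed antipode-free sets) is also sound, so no gaps remain.
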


It is natural to ask, whether we can partition the measures into parts of arbitrary measure. More precisely, suppose we are given numbers $(\alpha_1, \ldots, \alpha_d) \in [0, 1]^d$, and try to find a half-space $H\subseteq \mathbb R^d$ such that for any $i=1,\ldots,d$ the measure is $\mu_i(H) = \alpha_i$. This cannot be done in general, it is sufficient to consider several uniform measures on concentric balls. 

Some additional conditions on the measures are required, in the papers~\cite{bi1990,cgppsw1994,klh1997,bhj2007,breu2009} it was shown that it is sufficient to require the supports of the measures to be separated, i.e. for any system of representatives $x_i\in\conv \suppo \mu_i$, the points $(x_1, \ldots, x_d)$ should be affine independent. In Section~\ref{mespart} we study the measure partitions by hyperplanes and prove some generalization of this result.

\section{Theorems on the canonical bundle over the Grassmannian}

In this section we study the configuration space of $k$-flats in $\mathbb R^n$ and obtain its properties, which are required in the study of common transversals and measure partitions. 

Denote the index set $[n] = \{1,2,\ldots, n\}$.

\begin{defn}
The subsets $X$ and $Y$ of a linear space $L$ are called \emph{separated}, if there exists a linear function (a polynomial of degree $1$) $l : L\to\mathbb R$ such that $l(X) < 0$ and $l(Y) > 0$.
\end{defn}

\begin{defn}
The families $\mathcal F$ and $\mathcal G$ of subsets of a linear space $L$ are called \emph{separated}, if there exists a linear function $l : L\to\mathbb R$ such that
$l(X) < 0$ for any $X\in\mathcal F$, and $l(Y) > 0$ for any $Y\in\mathcal G$.
\end{defn}

\begin{defn}
Two families of segments $\mathcal A$ and $\mathcal B$ in the real line are called \emph{equalized}, if one of the following alternatives holds:

1) All right ends of $\mathcal A$ coincide in $a$, all left ends of $\mathcal B$ coincide in $b$, either $a$ is to the left of $b$, or every segment of $\mathcal A\cup\mathcal B$ contains the segment $[ba]$;

2) All right ends of $\mathcal B$ coincide in $b$, all left ends of $\mathcal A$ coincide in $a$, either $b$ is to the left of $a$, or every segment of $\mathcal B\cup\mathcal A$ contains the segment $[ab]$.
\end{defn}

Denote $\gamma_n^k$ the canonical vector bundle over the Grassmannian $G_n^k$ of linear $k$-subspaces in $\mathbb R^n$.

In the sequel the continuous dependence of a convex compact set on some parameter is considered in the Hausdorff metric.

\begin{thm}
\label{hyperpl2} 
Consider $n+1$ closed subsets $V_1, V_2,\ldots,V_{n+1}$ in $\gamma_n^1$, such that the intersection of $V_i$ with any fiber $L$ is a non-empty segment (possibly one point), depending continuously on $L$. Then one of the alternatives hold:

1) There exists a fiber $L$ and $i\in [n+1]$ such that $V_i\cap L$ is contained in every $V_j\cap L$, for $j\not=i$;

2) For any partition of the family $\{V_i\}$ into non-empty subfamilies $\mathcal F_1$ and $\mathcal F_2$ there is a fiber $L$ such that the families of segments
$$
\mathcal F_1(L) = \{U\cap L : U\in\mathcal F_1\}\quad\text{and}\quad
\mathcal F_2(L) = \{U\cap L : U\in\mathcal F_2\}
$$
are equalized in $L$.
\end{thm}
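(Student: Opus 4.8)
I would work on the double cover $S^{n-1}\to G_n^1$. Choosing a unit vector $u$ in a fibre $L$ and the coordinate $t\mapsto tu$ on $L$, the hypotheses let one write $V_i\cap L=[a_i(u),b_i(u)]$ with $a_i\le b_i$ and $a_i,b_i\colon S^{n-1}\to\mathbb R$ continuous (continuity of the endpoints follows from Hausdorff-continuity of $V_i\cap L$), and comparing the two orientations of $L$ gives the antipodal identities $a_i(-u)=-b_i(u)$, $b_i(-u)=-a_i(u)$. In these terms alternative~1) reads: for some $u$ there is $i$ with $a_i(u)=\max_k a_k(u)$ and $b_i(u)=\min_k b_k(u)$. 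So I would assume 1) fails and derive 2). Fix a partition $\{V_k\}=\mathcal F_1\sqcup\mathcal F_2$ with index sets $I$ and $J$. Because $a\leftrightarrow b$ under $u\mapsto-u$, the property ``$\mathcal F_1(L),\mathcal F_2(L)$ equalized'' is invariant under the antipodal map and the two cases 1),2) of that definition are interchanged by it; hence it is enough to find one fibre realizing case~1) of the definition of ``equalized''.

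The next step is to introduce the two contact functions
\[
\phi(u)=\min_{j\in J}a_j(u)-\max_{i\in I}b_i(u),\qquad
\psi(u)=\min_{i\in I}a_i(u)-\max_{j\in J}b_j(u),
\]
where $\phi(u)\ge 0$ (resp.\ $\psi(u)\ge0$) means that all of $\mathcal F_1(L)$ lies weakly to one side of all of $\mathcal F_2(L)$. The antipodal identities give $\phi(-u)=\psi(u)$, so $\phi-\psi$ is an odd function on $S^{n-1}$, and one checks that $\phi(u)>0$ forces $\psi(u)<0$. The elementary core of the argument is a squeezing principle: if $\phi(u_0)=\psi(u_0)=0$, then at $u_0$ one has $\min_{i\in I}a_i=\max_{i\in I}b_i=\min_{j\in J}a_j=\max_{j\in J}b_j$, which together with $a_k\le b_k$ forces every $V_k\cap L$ to be one and the same point --- a (degenerate) instance of alternative~1), excluded. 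More importantly, the same kind of bookkeeping shows that a fibre at which $\mathcal F_1(L)$ just touches $\mathcal F_2(L)$, together with the coincidences supplied below, must --- unless 1) holds --- have all right ends of $\mathcal F_1(L)$ equal and all left ends of $\mathcal F_2(L)$ equal with every segment containing the contact point; this is precisely case~1) of ``equalized'', and the ``overlapping'' clause there is exactly what is needed when the common right end of $\mathcal F_1$ falls to the right of the common left end of $\mathcal F_2$.

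To locate such a fibre I would invoke the Borsuk--Ulam theorem (equivalently, the non-vanishing of $w_1(\gamma_n^1)^{\,n-1}$ on $G_n^1=\mathbb{RP}^{n-1}$): the goal is to assemble, refining the single odd function $\phi-\psi$, an odd map $S^{n-1}\to\mathbb R^{n-1}$ whose zeros --- in the presence of the failure of 1) --- are equalized fibres, with the squeezing principle above as the prototype of the required local analysis; a zero of this map then yields case~2). A residual degenerate situation, in which nesting relations among the $V_k\cap L$ persist over all fibres, I would treat separately by an elementary device: a continuous real function $f$ on the connected sphere $S^{n-1}$ ($n\ge2$) with $f(-u)$ equal to minus another such function of $u$ must satisfy $f(u)f(-u)\ge0$ somewhere, which locates a fibre where two suitably chosen members become comparable and hence (by the nesting) one is contained in all the others, giving alternative~1). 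For $n=1$ there is a single fibre with two segments and the statement follows at once from the definitions.

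The hard part is the construction and analysis of the equivariant map in the previous paragraph. The functions one gets most naturally by antisymmetrising the data against the antipodal involution are the differences of the \emph{midpoints} of the segments, and making those vanish only produces families of \emph{concentric} segments --- not equalized ones. The real work is therefore to exploit the failure of 1) to replace these by functions whose simultaneous zeros pin down the \emph{endpoints}, and then to match the resulting configuration --- including the overlapping sub-case in the definition of ``equalized'' and the nested degenerations --- against that rather rigid definition. I expect this case analysis, rather than the topological input, to be the principal obstacle.
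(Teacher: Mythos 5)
Your setup on the double cover (endpoint functions $a_i,b_i$ with $a_i(-u)=-b_i(u)$, the reformulation of alternative~1, the observation that antisymmetrizing midpoints is useless) is sound, but the proposal has a genuine gap exactly where you flag it: the odd map $S^{n-1}\to\mathbb R^{n-1}$ whose zeros are equalized fibres is never constructed, and this is not just deferred bookkeeping. For a partition into parts of sizes $p,q$ with $p+q=n+1$, the coincidence conditions ``all right ends of $\mathcal F_1(L)$ equal, all left ends of $\mathcal F_2(L)$ equal'' amount to $n-1$ equations, but their vanishing does not imply ``equalized'': in the overlapping case one also needs every segment of $\mathcal F_1(L)\cup\mathcal F_2(L)$ to contain $[ba]$, i.e.\ the common right end of $\mathcal F_1(L)$ must be the \emph{global minimum} of all right ends and the common left end of $\mathcal F_2(L)$ the \emph{global maximum} of all left ends --- inequality constraints that the zero set of $n-1$ antisymmetrized functions cannot record. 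So even granting a zero of such a map, alternative~2 does not follow; the ``case analysis'' you postpone is the actual content of the theorem, and your sketch of the residual nested case (the sign argument $f(u)f(-u)\ge 0$) is too vague to close it.

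The paper resolves precisely this point by a selection that builds the min/max information in from the start: for $s\in S(\gamma_n^1)$ in the fibre $L$ let $f_i(s)$ be the point of $V_i\cap L$ farthest from $s$ and $f(s)$ the nearest among the $f_i(s)$, and set $U_i=\{s: f(s)=f_i(s)\}$; thus $s\in U_i$ means that, seen from $s$, the far end of $V_i\cap L$ is the globally leftmost right end. An antipodal pair inside a single $U_i$ is exactly alternative~1; otherwise $U_1,\dots,U_{n+1}$ is a closed antipodal-free covering of $S(\gamma_n^1)$ (which has index $n-1$ by Theorem~\ref{indsg}), and Theorem~\ref{bgcover} (via Theorem~\ref{antipcover}) yields, for every partition $[n+1]=I_1\cup I_2$, an antipodal pair $s,\sigma(s)$ with $s\in\bigcap_{i\in I_1}U_i$ and $\sigma(s)\in\bigcap_{i\in I_2}U_i$; since membership in $U_i$ is a global extremality statement, the containment clause of ``equalized'' comes for free. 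Note also that the topological input there is stronger than ``an odd map to $\mathbb R^{n-1}$ has a zero'': the proof of Theorem~\ref{antipcover} needs the partition-of-unity map into the sphere to hit a \emph{prescribed} point (surjectivity, from nontriviality in top cohomology, Theorem~\ref{equimaps}), which is what lets one prescribe the coincidence pattern matching the chosen partition. If you want to complete your plan, this extremal selection is the missing replacement for your functions, and the surjectivity (odd-degree) argument is the replacement for plain Borsuk--Ulam.
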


It is clear that a pair of equalized families of segments is either separated, or has a common point. Thus Theorem~\ref{hyperpl2} implies the following.

\begin{cor}
\label{hyperpl1} 
Consider $n+1$ closed subsets $V_1, V_2,\ldots,V_{n+1}$ in $\gamma_n^1$, such that the intersection of $V_i$ with any fiber $L$ is a non-empty segment (possibly one point), depending continuously on $L$. Then either all the sets $V_i$ have a common point; or for any partition of the family $\{V_i\}$ into non-empty subfamilies $\mathcal F_1$ and $\mathcal F_2$ there is a fiber $L$ such that the families of segments
$$
\mathcal F_1(L) = \{U\cap L : U\in\mathcal F_1\}\quad\text{and}\quad
\mathcal F_2(L) = \{U\cap L : U\in\mathcal F_2\}
$$
are separated in $L$.
\end{cor}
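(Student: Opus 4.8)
The plan is to deduce Corollary~\ref{hyperpl1} directly from Theorem~\ref{hyperpl2}, the only extra ingredient being the elementary remark quoted just above the statement: a pair of equalized families of segments $(\mathcal A,\mathcal B)$ on a line is either separated or has a common point. I would first verify this ingredient from the definition of equalized families. In alternative~1 of that definition all segments of $\mathcal A$ have right end $a$ and all segments of $\mathcal B$ have left end $b$; if $a$ lies strictly to the left of $b$, any point of the open interval $(a,b)$ strictly separates $\mathcal A$ from $\mathcal B$, and otherwise every segment of $\mathcal A\cup\mathcal B$ contains the (possibly degenerate) segment with endpoints $a$ and $b$, so the two families share a common point. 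Alternative~2 of the definition is symmetric. The only place needing a little care is the degenerate cases $a=b$ and one-point segments, but these cause no trouble.

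Next I would apply Theorem~\ref{hyperpl2} to $V_1,\ldots,V_{n+1}$ and argue by cases on which of its two alternatives holds. If alternative~1 holds, there are a fiber $L$ and an index $i$ with $V_i\cap L\subseteq V_j\cap L$ for all $j\neq i$; choosing any point $p$ of the non-empty segment $V_i\cap L$ gives $p\in V_j\cap L\subseteq V_j$ for every $j\neq i$ and $p\in V_i$, so $p\in\bigcap_{j=1}^{n+1}V_j$ and all the $V_i$ have a common point --- the first alternative of the corollary.

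Finally, assume alternative~2 of Theorem~\ref{hyperpl2} holds and, aiming for the second alternative of the corollary, suppose the $V_i$ have no common point. Fix an arbitrary partition $\{V_i\}=\mathcal F_1\sqcup\mathcal F_2$ into non-empty subfamilies. Theorem~\ref{hyperpl2} yields a fiber $L$ on which $\mathcal F_1(L)$ and $\mathcal F_2(L)$ are equalized, hence, by the ingredient above, either separated in $L$ --- which is what we want --- or possessing a common point $p\in L$. In the latter case $p$ belongs to $U\cap L$ for every $U\in\mathcal F_1\cup\mathcal F_2=\{V_1,\ldots,V_{n+1}\}$, so $p\in\bigcap_i V_i$, contradicting our assumption; therefore the families are separated in $L$, and since the partition was arbitrary this gives the second alternative. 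In every case the dichotomy asserted by Corollary~\ref{hyperpl1} follows. I do not expect a genuine obstacle here: the substantive content is entirely packed into Theorem~\ref{hyperpl2}, and the corollary is a matter of the bookkeeping above; if anything, the one point to double-check is that ``separated'' and ``having a common point'' really do exhaust the equalized case, including the degenerate sub-cases.
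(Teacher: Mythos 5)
Your proposal is correct and follows exactly the route the paper takes: the paper simply observes that a pair of equalized families of segments is either separated or has a common point and lets Corollary~\ref{hyperpl1} follow from Theorem~\ref{hyperpl2}. Your write-up just spells out this observation (including the degenerate cases and the containment alternative of Theorem~\ref{hyperpl2}), and the bookkeeping is sound.
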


Now we are going to state some results for arbitrary $k$ and the canonical bundle $\gamma_n^k\to G_n^k$. Let us make some definitions.

\begin{defn}
Let $K\subset\mathbb R^n$ be a convex compact set. A pair of points on $\partial K$ is called \emph{antipodal} w.r.t. $K$, if they can be enclosed into a pair of support hyperplanes of $K$ with opposite outer normals.
\end{defn}

In other words, the points $x$ and $y$ are antipodal w.r.t. $K$, iff the segment $[xy]$ is an affine diameter of $K$.

\begin{defn}
A family of compact sets $\mathcal F$ in $\mathbb R^n$ is called \emph{non-antipodal}, if none of the sets $V\in\mathcal F$ contains a pair of points, antipodal w.r.t. $\conv\bigcup\mathcal F$.
\end{defn}

In the sequel we assume that for any vector bundle we have some norm on the fibers, that has a smooth unit ball, and depends continuously on the fiber. In particular, we can consider the standard Euclidean norm on $\gamma_n^k$, or some other norm.

\begin{thm}
\label{kpl1} 
Consider $n+1$ compact sets $V_1, V_2,\ldots,V_{n+1}$ in $\gamma_n^k$ such that for any $i=1,\ldots, n+1$ the intersection with fiber $V_i\cap L$ is nonempty and depends continuously on $L$ in the Hausdorff metric. Suppose also that for any fiber $L\in G_n^k$ the family $\{V_i\cap L\}_{i=1}^{n+1}$ is non-antipodal in $L$. Then there exists a point $x$ in some fiber $L$ such that the distances from $x$ to all $V_i\cap L$ $(i=1,\ldots, n+1)$ are equal.
\end{thm}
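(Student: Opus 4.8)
\emph{Proof plan.} Write a point of the total space $\gamma_n^k$ as $(L,x)$ with $x$ in the fibre $L\in G_n^k$, and set $f_i(L,x)=\dist(x,V_i\cap L)$, a continuous function on $\gamma_n^k$ by the assumed Hausdorff-continuity of $V_i\cap L$ in $L$. The point $x$ is equidistant from all the $V_i\cap L$ exactly when the fibrewise map $G=\bigl(f_i-\tfrac1{n+1}\sum_jf_j\bigr)_i\colon\gamma_n^k\to W$ vanishes there, where $W=\{y\in\mathbb R^{n+1}:\sum_iy_i=0\}\cong\mathbb R^n$; so the theorem asserts that $G$ has a zero, and I argue by contradiction. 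Assuming $G$ never vanishes, form $g=(f_i-\min_jf_j)_i\ge0$: at each point some coordinate of $g$ is $0$, and $g$ is never identically $0$ (that would give a point equidistant from all $V_i\cap L$), so $\bar G:=g/\sum_jg_j$ is a well-defined map $\gamma_n^k\to\partial\Delta^n\cong S^{n-1}$, and $G':=\bar G-c\colon\gamma_n^k\to W$ (with $c$ the barycentre of $\Delta^n$) is \emph{nowhere zero}. Fix the given norm on $\gamma_n^k$; for $R$ large, restrict $G'$ to the disk sub-bundle $D_R=\{\|x\|\le R\}$ and its boundary sphere bundle $S_R$, which carries the free fibrewise involution $(L,x)\mapsto(L,-x)$.

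The heart of the matter is the claim that for all sufficiently large $R$ and every $(L,u)\in S_R$ the index sets $A(L,u)=\{i:f_i(L,u)=\min_jf_j(L,u)\}$ and $A(L,-u)$ are \emph{disjoint}. Granting this, the linear homotopy $H_t(L,u)=G'(L,u)-tG'(L,-u)$ stays in $W$ and is nowhere zero on $S_R$: by a short computation (choose $i\in A(L,u)$, so the $i$-th coordinate of $\bar G(L,u)$ is $0$; for $t<1$ the $i$-th coordinate of $H_t$ is strictly negative because those of $c$ are positive and those of $\bar G$ non-negative, and for $t=1$ it equals $-\bar G(L,-u)_i<0$ since $i\notin A(L,-u)$). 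Thus $G'|_{S_R}$ is homotopic, through maps $S_R\to W\setminus\{0\}\simeq S^{n-1}$, to the fibrewise-\emph{odd} map $\Psi(L,u)=G'(L,u)-G'(L,-u)$, $\Psi(L,-u)=-\Psi(L,u)$.

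To prove the disjointness claim I would use that the chosen norm is symmetric with smooth unit ball, so $\|\cdot\|$ is differentiable off $0$; writing $\xi(v)=\nabla\|\cdot\|(v)$ for a unit vector $v$ gives $\xi(-v)=-\xi(v)$ and $\|u-p\|=\|u\|-\langle\xi(u/\|u\|),p\rangle+O(\|p\|^2/\|u\|)$ uniformly for bounded $p$. Hence, for $\|u\|=R$ large, $i\in A(L,u)$ (i.e.\ $V_i\cap L$ is a closest set to $u$) forces $V_i\cap L$ to contain a point within $O(1/R)$ of the support hyperplane of $K_L=\conv\bigcup_j(V_j\cap L)$ with outer normal $\xi(u/\|u\|)$. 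If disjointness failed for arbitrarily large $R$, take $R_m\to\infty$, a fixed $i$, fibres $L_m$ and $u_m\in L_m$ of norm $R_m$ with $i\in A(L_m,u_m)\cap A(L_m,-u_m)$, pass to convergent subsequences $L_m\to L$, $u_m/R_m\to v$ (using $V_i\cap L_m\to V_i\cap L$), and obtain points of $V_i\cap L$ on the two support hyperplanes of $K_L$ with opposite normals $\pm\xi(v)$ — a pair antipodal with respect to $K_L$ lying in $V_i\cap L$, contradicting that $\{V_j\cap L\}$ is non-antipodal.

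Finally, $G'|_{S_R}$ extends over the disk bundle (to $G'|_{D_R}$), hence so does the homotopic map $\Psi$; but a fibrewise-odd map $S(\gamma_n^k)\to S^{n-1}$ cannot extend to a map $D(\gamma_n^k)\to S^{n-1}$, since the obstruction is the top dual Stiefel--Whitney class $\overline w_{n-k}(\gamma_n^k)=w_{n-k}\bigl((\gamma_n^k)^{\perp}\bigr)\in H^{n-k}(G_n^k;\mathbb Z/2)$, which is non-zero (it restricts nontrivially to a suitable $\mathbb{RP}^{\,n-k}\subset G_n^k$). This contradiction proves the theorem. The step I expect to be the main obstacle is exactly this last topological input — a bundle ("parametrized") form of the Borsuk--Ulam theorem for coverings of the ball, which for the \emph{canonical} bundle boils down to the non-vanishing just quoted and holds uniformly for $1\le k\le n$; additional but routine care is needed for the uniformity in $L\in G_n^k$ of the support-hyperplane estimate and for fibres in which $K_L$ fails to be full-dimensional.
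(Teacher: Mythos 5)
Your outline is sound, and its geometric half coincides with the paper's proof: your disjointness claim for the argmin sets $A(L,u)$ and $A(L,-u)$ on a large sphere bundle, established by taking $R_m\to\infty$ and producing two points of $V_i\cap L$ on opposite parallel support hyperplanes of $\conv\bigcup_j(V_j\cap L)$, is exactly the paper's verification that the closed sets $U_i=\{x\in L:\dist(x,V_i\cap L)=\min_j\dist(x,V_j\cap L)\}$ contain no antipodal pairs in $S(\gamma_n^k)$ for large radius; and your explicit map $\bar G$ with the homotopy $H_t$ is a clean substitute for the partition-of-unity argument packaged in Theorems~\ref{antipcover} and~\ref{bgcover}, reducing the theorem to the same key statement: a fibrewise-odd map $S(\gamma_n^k)\to S^{n-1}$ admits no continuous extension $D(\gamma_n^k)\to S^{n-1}$. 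Where you genuinely differ is in justifying that statement. The paper proves $\hind S(\gamma_n^k)=n-1$ via the flag bundle (Theorem~\ref{indsg}) and then excludes the extension by the Thom-sequence argument of Theorem~\ref{equimaps} together with Lemma~\ref{bbmap}. Your characteristic-class route is correct in substance and in fact simplifies the index computation: writing $H^*(S(\gamma_n^k)/Z_2, Z_2)$ as a free $H^*(G_n^k, Z_2)$-module with basis $1,w,\ldots,w^{k-1}$, the projective-bundle relation shows that $w^{n-1}=w^{(k-1)+(n-k)}$ has $w^{k-1}$-coordinate $\overline w_{n-k}(\gamma_n^k)$, and restricting to an $\mathbb RP^{n-k}\subset G_n^k$ (lines in a fixed $(n-k+1)$-subspace summed with a fixed $(k-1)$-plane, over which $\gamma_n^k$ is a tautological line bundle plus a trivial one) gives $\overline w_{n-k}(\gamma_n^k)\neq 0$, hence $w^{n-1}\neq0$ and $\hind S(\gamma_n^k)\ge n-1$ with no flag-manifold computation. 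However, the sentence ``the obstruction is $\overline w_{n-k}$'' is not by itself a proof of non-extendability: you still need the step that an odd map from a free $Z_2$-space of index $n-1$ to $S^{n-1}$ cannot be extended over the associated ball construction --- precisely Theorem~\ref{equimaps} combined with Lemma~\ref{bbmap}, or an equivalent Gysin/obstruction-theory computation identifying the mod $2$ primary obstruction in $H^{n}(D(\gamma_n^k),S(\gamma_n^k), Z_2)\cong H^{n-k}(G_n^k, Z_2)$ with $\overline w_{n-k}$. You flagged this as the expected main obstacle yourself; once that step is supplied (in the paper's form or in your characteristic-class form), the proof is complete.
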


\begin{thm}
\label{kpl2} 
Consider $n+1$ compact sets $V_1, V_2,\ldots,V_{n+1}$ in $\gamma_n^k$ such that for any $i=1,\ldots, n+1$ the intersection with fiber $V_i\cap L$ is nonempty and depends continuously on $L$ in the Hausdorff metric. Suppose also that for any fiber $L\in G_n^k$ the family $\{V_i\cap L\}_{i=1}^{n+1}$ is non-antipodal in $L$, and the union $\left(\bigcup_{i=1}^{n+1} V_i\right)\cap L$ is convex. Then the sets $V_i$ have a common point.
\end{thm}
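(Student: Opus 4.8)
\section*{Proof plan for Theorem~\ref{kpl2}}

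The plan is to derive Theorem~\ref{kpl2} from Theorem~\ref{kpl1} by a short argument in convex geometry, passing from an ``equidistant point'' statement to a ``common point'' statement much as the ball form of the Borsuk--Ulam theorem is obtained from the sphere form; the role of ``the ball'' is played here by the (assumed convex) union of the fibrewise intersections.

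First I apply Theorem~\ref{kpl1} to $V_1,\ldots,V_{n+1}$: its hypotheses are among those of Theorem~\ref{kpl2}, so there are a fiber $L\in G_n^k$ and a point $x\in L$ with $\dist(x,V_i\cap L)=r$ for all $i$ and a common value $r\ge 0$. Put $W:=\bigl(\bigcup_{i=1}^{n+1}V_i\bigr)\cap L=\bigcup_{i=1}^{n+1}(V_i\cap L)$; it is compact, being a finite union of compacta, and by hypothesis convex. Since $\dist(x,W)=\min_i\dist(x,V_i\cap L)=r$, the nearest point $p$ of $W$ to $x$ satisfies $|xp|=r$, so $p$ lies in $W\cap\bar B$, where $\bar B=\bar B(x,r)\subset L$ denotes the closed ball.

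Next I claim that $W\cap\bar B=\{p\}$. When $r=0$ this is immediate. When $r>0$, the nearest-point property of the convex set $W$ yields a supporting hyperplane $H$ of $W$ through $p$ orthogonal to $[xp]$, with $W$ contained in the closed halfspace bounded by $H$ and not containing the centre $x$ of $\bar B$; since $H$ is tangent to $\bar B$ at $p$, that halfspace meets $\bar B$ only at $p$, and the claim follows. This is the only place where convexity of $\bigl(\bigcup_i V_i\bigr)\cap L$ enters: without it $W$ could ``wrap around'' $\bar B$ and reach it away from $p$.

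Finally, for each $i$ the equality $\dist(x,V_i\cap L)=r$ together with compactness of $V_i\cap L$ produces a point $y_i\in V_i\cap L$ with $|xy_i|=r$; then $y_i\in W\cap\bar B=\{p\}$, hence $p\in V_i\cap L$. Thus $p$ is a common point of $V_1,\ldots,V_{n+1}$, which proves the theorem. Given Theorem~\ref{kpl1}, no real obstacle remains in this deduction, so the difficulty of this circle of results is concentrated entirely in Theorem~\ref{kpl1}.
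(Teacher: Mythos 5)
Your deduction is correct and takes essentially the same route as the paper: apply Theorem~\ref{kpl1} to get an equidistant point $x$ in some fiber $L$, and then finish with a short convexity argument about the nearest points of the convex union $W=\bigl(\bigcup_i V_i\bigr)\cap L$ to $x$. The only (harmless) difference is the endgame: the paper reuses the non-antipodality hypothesis, via a pair of parallel support half-spaces of $W$, to force the common distance to be zero (so $x$ itself is the common point, in any smooth norm), whereas you keep $r>0$ possible and use strict convexity of the Euclidean ball to get $W\cap\bar B(x,r)=\{p\}$, making the foot point $p$ the common point --- which is fine because the conclusion is norm-independent, so you may apply Theorem~\ref{kpl1} with the Euclidean norm.
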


The following theorem gives a partial solution to the conjecture on the fields of polytopes in the vector bundle $\gamma_n^k$ (see~\cite{mak2007}, Conjecture~1 and Theorem~12).

\begin{thm}
\label{polsection}
Consider $m$ continuous sections $s_1,\ldots, s_m$ of the bundle $\gamma_n^k$, such that for any fiber $L\in G_n^k$ the polytope $P(L) = \conv\{s_1(L), \ldots, s_m(L)\}$ has non-empty interior. Then there exists a fiber $L\in G_n^k$ and a pair of disjoint support half-spaces of $P(L)$, say $H_1, H_2\subset L$, such that the union $H_1\cup H_2$ contains at least $n+1$ points of $\{s_1(L), \ldots, s_m(L)\}$.
\end{thm}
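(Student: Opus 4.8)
\emph{Strategy.} I would reduce the statement to a Borsuk--Ulam-type property of the unit sphere bundle $S(\gamma_n^k)$ equipped with the fibrewise antipodal involution $u\mapsto -u$.

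\emph{Reformulation.} Two half-spaces of a fibre $L\cong\mathbb R^k$ are disjoint only when their bounding hyperplanes are parallel with opposite outer normals; hence a pair of disjoint support half-spaces of $P(L)$ is precisely the pair of extreme support hyperplanes of $P(L)$ orthogonal to some unit $u\in L$, and the points $s_i(L)$ lying in $H_1\cup H_2$ are exactly those lying on $F^+(u)\cup F^-(u)$, where $F^+(u)$ (resp.\ $F^-(u)=F^+(-u)$) denotes the face of $P(L)$ on which $\langle\cdot,u\rangle$ attains its maximum (resp.\ minimum). Thus it suffices to produce $L\in G_n^k$ and a unit $u\in L$ such that at least $n+1$ of the points $s_i(L)$ lie on $F^+(u)\cup F^-(u)$; in particular this forces $m\ge n+1$, which I assume.

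\emph{The case $m=n+1$.} Fix any partition $[n+1]=A\sqcup B$ into two nonempty blocks, say $A=\{1,\dots,p\}$ and $B=\{p+1,\dots,n+1\}$, and define $f\colon S(\gamma_n^k)\to\mathbb R^{n-1}$ by recording the consecutive differences within the blocks:
$$
f(L,u)=\bigl(\langle s_i(L)-s_{i+1}(L),u\rangle\bigr)_{i\in\{1,\dots,p-1\}\cup\{p+1,\dots,n\}}.
$$
This $f$ is continuous and fibrewise odd. The topological input is that $S(\gamma_n^k)$, with the fibrewise antipodal action, has $\mathbb Z/2$-index $n-1$ --- equivalently, the first Stiefel--Whitney class of the tautological line bundle over the projectivisation $\mathbb P(\gamma_n^k)$ is nonzero when raised to the power $n-1$; this is the characteristic-class fact underlying the theorems of the present section. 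It forces $f$ to vanish at some $(L_0,u_0)$, and there $\langle s_i(L_0),u_0\rangle$ is constant ($=\alpha$) on $A$ and constant ($=\beta$) on $B$. If $\alpha=\beta$ then $P(L_0)$ lies in a proper affine subspace of $L_0$, contradicting the non-empty-interior hypothesis; so $\alpha\ne\beta$. Then $\{\alpha,\beta\}$ are the maximum and minimum of $\langle\cdot,u_0\rangle$ on $P(L_0)$, so all $n+1$ points lie on $F^+(u_0)\cup F^-(u_0)$, and the required half-spaces are the outer support half-spaces with normals $u_0$ and $-u_0$.

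\emph{General $m$ and the main obstacle.} For $m>n+1$ the same map $f$ (built from any two disjoint blocks $A,B$ with $|A|+|B|=n+1$) still produces a fibre on which the $A$- and $B$-points split onto two parallel hyperplanes, but the remaining $s_j(L_0)$ ($j\notin A\cup B$) may protrude beyond them, so these hyperplanes need not support $P(L_0)$. I would pass to the zero set $Z=f^{-1}(0)\subset S(\gamma_n^k)$ --- nonempty, and positive-dimensional when $1<k<n$ --- adjoin the functions $\varphi_j(L,u)=\langle s_j(L)-s_1(L),u\rangle$ for $j\notin A\cup B$, and look for $u\in Z$ on which all the $\varphi_j$ have a single sign (replacing $u$ by $-u$ if necessary): failure of this would give a fibrewise-odd map from $Z$ into the $\mathbb Z/2$-subspace of $\mathbb R^{m-n-1}$ consisting of vectors possessing both a positive and a negative coordinate, a space of index $m-n-3$. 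Deriving a contradiction then requires a lower bound for the $\mathbb Z/2$-index of $Z$ sharper than the obvious one (merely that $Z$ is nonempty). I expect this step --- carrying the clean $m=n+1$ argument across the inequality constraints forced by the extra sections --- to be the principal difficulty, the Borsuk--Ulam property of $\gamma_n^k$ being the routine ingredient once it is established.
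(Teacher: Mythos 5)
Your reformulation (find $L$ and a unit $u\in L$ with at least $n+1$ sections on the top and bottom faces $F^+(u)\cup F^-(u)$) is exactly right, and your $m=n+1$ argument is correct: the consecutive-difference map is fibrewise odd, $\hind S(\gamma_n^k)=n-1$ forces a zero, and nonempty interior rules out $\alpha=\beta$. But the theorem is stated for arbitrary $m$, and there your proposal has a genuine, acknowledged gap. Two things go wrong with the sketched continuation. First, the condition you propose to enforce on the stray sections ($\varphi_j=\langle s_j-s_1,u\rangle$ of a single sign) is not even the right target: you need the two-sided condition that every $s_j(L_0)$, $j\notin A\cup B$, lies in the slab between the two hyperplanes, since a stray point protruding on either side destroys the supporting property of one of them. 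Second, the index bound you would need for $Z=f^{-1}(0)$ is not available: cutting a space of index $n-1$ by $n-1$ odd equations only yields (via Lemma~\ref{hind-union} applied to $\{|f|\le\epsilon\}\cup\{|f|\ge\epsilon\}$) that $\hind Z\ge 0$, i.e.\ nonemptiness, which gives no contradiction against a target of index $m-n-3$ once $m>n+2$. Prescribing in advance which sections land on the two hyperplanes is what blocks the argument; the points realizing the extreme faces cannot be chosen a priori.

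The paper avoids this entirely by a covering-multiplicity argument rather than a zero-set argument: define, for each $i\in[m]$, the closed set $U_i=\{(u,L)\in S(\gamma_n^k):\langle u,s_i(L)\rangle=\max_{j\in[m]}\langle u,s_j(L)\rangle\}$. These cover $S(\gamma_n^k)$, and the nonempty-interior hypothesis shows no $U_i$ contains an antipodal pair (otherwise all $\langle u,s_j(L)\rangle$ would coincide and $P(L)$ would lie in a hyperplane of $L$). Since $\hind S(\gamma_n^k)=n-1$ (Theorem~\ref{indsg}), Theorem~\ref{antipcover2} yields a point $(u,L)$ such that at least $(n-1)+2=n+1$ of the $U_i$ contain $(u,L)$ or $(-u,L)$; that is, at least $n+1$ sections lie on the maximal or minimal support hyperplane orthogonal to $u$, which is the assertion for every $m$ at once. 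The underlying mechanism of Theorem~\ref{antipcover2} is a partition-of-unity map whose image would land in a complex of too small dimension if the multiplicity over every antipodal pair were at most $n$ --- this is the ingredient your plan is missing, and it is not recoverable from the tools you invoke.
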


In Sections~\ref{mespart}, \ref{buplanes}, \ref{hellytype} we deduce some corollaries from the above theorems, and prove some results using the similar technique.

\section{Some topological assertions}

Let us state some definitions of equivariant topology, see the book ~\cite{hsiang1975} for more detailed discussions.

\begin{defn}
Let $G$ be a compact Lie group or a finite group. A space $X$ with continuous action of $G$ is called a \emph{$G$-space}. A continuous map of $G$-spaces, commuting with the action of $G$ is called a \emph{$G$-map} or an \emph{equivariant map}. A $G$-space is called \emph{free} if the action of $G$ is free.
\end{defn}

There exists the universal free $G$-space $EG$ such that any other $G$-space maps uniquely (up to $G$-homotopy) to $EG$. The space $EG$ is homotopy trivial, the quotient space is denoted $BG = EG/G$. For any $G$-space $X$ and an Abelian group $A$ the equivariant cohomology $H_G^*(X, A)=H^*(X\times_G EG, A)$ is defined, and for free $G$-spaces the equality $H_G^*(X, A) = H^*(X/G, A)$ holds.

In this paper we consider the action of $G=Z_2$ only. Note that 
$$
H_G^*(\mathrm{pt}, Z_2) = H^*(\mathbb RP^\infty, Z_2) = Z_2[w]=\Lambda,
$$ 
where the dimension of the generator is $\dim w = 1$. Since any $G$-space $X$ can be mapped to the point $\pi_X: X\to \mathrm{pt}$, we have a natural map $\pi_X^* : \Lambda\to H_G^*(X, Z_2)$, the image $w$ under this map will be denoted $w$, if it does not make a confusion. The generator element of $Z_2$ will be denoted $\sigma$.

It is important to use the \v Cech or Alexander-Spanier cohomology to consider arbitrary closed subsets and their cohomology, because of their continuity property w.r.t. intersections.

\begin{defn}
The \emph{cohomology index} of a $Z_2$-space $X$ is the maximal $n$ such that the power $w^n\not=0$ in $H_G^*(X,Z_2)$. If there is no maximum, we consider the index equal to $\infty$. Denote the index of $X$ by $\hind X$.
\end{defn}

It is quite clear that the index of a non-free $Z_2$-space equals $\infty$. From the explicit description of the cohomology of $\mathbb RP^n$ it follows that the index of the $n$-dimensional sphere with the antipodal action of $Z_2$ ($x\mapsto -x$) equals $n$.

Let us state the following well-known lemma.

\begin{lem}[The generalized Borsuk-Ulam theorem for odd maps]
\label{index-bu}
If there exists an equivariant map $f: X\to Y$, then $\hind X\le \hind Y$.
\end{lem}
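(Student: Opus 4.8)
The plan is to pass to the Borel constructions and invoke functoriality of equivariant cohomology. Given the equivariant map $f:X\to Y$, I would first form the induced map on the associated bundles over $BG$,
$$
\bar f = f\times_G\mathrm{id}_{EG} : X\times_G EG \longrightarrow Y\times_G EG,
$$
which is well defined precisely because $f$ commutes with the $Z_2$-action. The key structural observation is that $\bar f$ fits into a commutative triangle over $BG$: post-composing $\bar f$ with the projection $Y\times_G EG\to BG$ gives the projection $X\times_G EG\to BG$, since both are induced by the (homotopically unique) map of $G$-spaces to $\mathrm{pt}$.

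Applying \v Cech cohomology with $Z_2$-coefficients, I obtain a ring homomorphism
$$
\bar f^* : H_G^*(Y, Z_2) \longrightarrow H_G^*(X, Z_2),
$$
and the commutativity over $BG$ says exactly that $\bar f^*$ is compatible with the structure maps $\pi_Y^*:\Lambda\to H_G^*(Y,Z_2)$ and $\pi_X^*:\Lambda\to H_G^*(X,Z_2)$. Concretely, $\bar f^*(w)=w$ in $H_G^*(X,Z_2)$, and since $\bar f^*$ is multiplicative, $\bar f^*(w^n)=w^n$ for every $n$.

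It then remains to read off the inequality of indices by contraposition. If $\hind Y=\infty$ there is nothing to prove, so assume $\hind Y=m<\infty$; then $w^{m+1}=0$ in $H_G^*(Y,Z_2)$, hence $w^{m+1}=\bar f^*(w^{m+1})=\bar f^*(0)=0$ in $H_G^*(X,Z_2)$, and the same holds for all higher powers. Therefore $\hind X\le m=\hind Y$. There is no real obstacle here; the only point deserving a moment's care is the naturality of the Borel construction in the equivariant map and the fact that $\bar f$ respects the projection to $BG$ — this is what forces $\bar f^*$ to fix the class $w$, and everything else is formal.
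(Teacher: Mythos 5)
Your argument is correct and is essentially the paper's own proof, which simply invokes the naturality of the maps $\pi_X$, $\pi_Y$ (i.e.\ that the Borel construction of $f$ is a map over $BG$, so $\bar f^*$ fixes the class $w$) together with the definition of the index; you have just written out this naturality argument in full detail. No further comment is needed.
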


The lemma follows from the definition of index and the naturality of maps $\pi_X$ and $\pi_Y$. It is also called the monotonicity property of index.

Let us introduce some geometrical construction for a $Z_2$-space.

\begin{defn}
Let $X$ be a free $Z_2$-space. Take the product $X\times I$, where $I=[0,1]$ is the segment, and define the action of $Z_2$ by $\sigma(x, t) = (\sigma(x), 1-t)$. The space $X\times I$ is free, so we put $B(X) = (X\times I)/Z_2$. Informally, $B(x)$ is obtained from $X$ by gluing a segment to any pair $\{x, \sigma(x)\}$ and introducing the respective topology on the set of segments. The natural map $B(X)\to X/Z_2$ is a fiber bundle with fiber $I$, and a homotopy equivalence.
\end{defn}

\begin{defn}
Define the map $i_X:X\to B(X)$ by the formula $i_X(x)\mapsto (x, 0)$. It identifies $X$ with sphere bundle of the line bundle $B(X)\to X/Z_2$. In the sequel we always identify $X$ with a subset of $B(X)$ by the map $i_X$.
\end{defn}

Note that the space $B(X)$ has a natural $Z_2$-action, given by $(x, t)\mapsto (\sigma(x), t)$, with fixed-point set $(X\times \{1/2\})/Z_2$.

The following Theorem is a slight generalization of Lemma~5.5 from~\cite{volsce2005}.

\begin{thm}
\label{equimaps} Suppose that the $Z_2$-spaces $X$ and $Y$ have $\hind X = \hind Y = n$. Then for any equivariant map $f : X\to Y$ the map $f^*:H^n(Y,Z_2)\to H^n(X,Z_2)$ is nontrivial. Moreover, there does not exist a (continuous, non-equivariant) map $h: B(X)\to Y$ such that $f = h\circ i_X$.
\end{thm}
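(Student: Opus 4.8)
The plan is to prove both assertions from the basic properties of the cohomology index and the mapping-cone structure of $B(X)$. First, the statement that $f^*:H^n(Y,Z_2)\to H^n(X,Z_2)$ is nontrivial. Since $\hind Y = n$, the class $w^n$ is nonzero in $H_G^n(Y,Z_2)$; since $\hind X = n$ as well, $w^n \neq 0$ in $H_G^n(X,Z_2)$. The equivariant map $f$ induces $f_G^* : H_G^*(Y,Z_2)\to H_G^*(X,Z_2)$ commuting with the maps $\pi_Y^*, \pi_X^*$ from $\Lambda$, so $f_G^*(w^n) = w^n \neq 0$ in $H_G^n(X,Z_2)$. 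Now I would use the description of $H_G^*$ as the cohomology of the Borel construction together with the fact that for a \emph{free} $Z_2$-space the projection $X\times_G EG \to X/G$ is a homotopy equivalence, so $H_G^*(X,Z_2)=H^*(X/G,Z_2)$, and similarly for $Y$. The point is to compare with the \emph{non-equivariant} cohomology of $X$ and $Y$: the issue is that $f^*:H^n(Y,Z_2)\to H^n(X,Z_2)$ refers to the cohomology of the spaces themselves, not the quotients. Here I expect the argument of Lemma~5.5 in~\cite{volsce2005} to go through: one uses the Gysin sequence of the double covers $X\to X/G$ and $Y\to Y/G$, whose Euler class is exactly $w$, to transfer the nonvanishing of $w^n$ to nonvanishing of $f^*$ on $H^n$. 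Concretely, in the Gysin sequence $\cdots\to H^{n-1}(X,Z_2)\xrightarrow{\cup w} H^n(X/G,Z_2)\xrightarrow{p^*} H^n(X,Z_2)\to\cdots$ the class $w^n\in H^n(X/G,Z_2)$ maps to $p^*(w^n)$, and naturality forces the statement; the main technical care is checking that the relevant portion of the sequence gives an injection or that $w^n$ is not a multiple of $w$ in the image of $\cup w$, which holds because $w^{n+1}$ could be nonzero only if $\hind X>n$.

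For the second assertion, suppose for contradiction that there is a continuous (non-equivariant) map $h:B(X)\to Y$ with $f = h\circ i_X$. The key structural fact, recorded in the definitions above, is that $i_X:X\hookrightarrow B(X)$ identifies $X$ with the sphere bundle of the line bundle $B(X)\to X/G$, and that $B(X)\to X/G$ is a homotopy equivalence. Thus $h$ being defined on all of $B(X)$ means the composite $X\xrightarrow{i_X} B(X)\to Y$ factors, up to homotopy, through $X/G$ — i.e.\ through a space on which the relevant $Z_2$-action data is ``killed.'' I would make this precise by considering the long exact sequence of the pair $(B(X), X)$: since $B(X)\to X/G$ is a homotopy equivalence and $X\hookrightarrow B(X)$ is the sphere bundle inclusion, the pair $(B(X),X)$ is the Thom space data of the line bundle, so $H^*(B(X),X;Z_2)\cong H^{*-1}(X/G,Z_2)$ by the Thom isomorphism, and the connecting map $H^n(X,Z_2)\to H^{n+1}(B(X),X;Z_2)\cong H^n(X/G,Z_2)$ is (up to sign) the composition with $\cup w$ followed by an identification. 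The existence of $h$ means $f^* = i_X^*\circ h^*$, so $f^*(H^n(Y,Z_2))$ lies in the image of $i_X^*:H^n(B(X),Z_2)\to H^n(X,Z_2)$, equivalently in $\ker$ of the connecting homomorphism $\delta:H^n(X,Z_2)\to H^{n+1}(B(X),X;Z_2)$.

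The contradiction then comes from identifying this connecting homomorphism with (essentially) multiplication by $w$ under $H^n(X,Z_2)\cong H^n_G(X,Z_2)/(w\text{-torsion})$ — more carefully, from the fact that a class in $H^n(X,Z_2)$ pulled back from $H^n_G(X,Z_2)$ via the projection $X\to EG\times_G X$ lies in the image of $i_X^*$ if and only if it is annihilated by the appropriate class. By the first part, $f^*(w^n_Y) = w^n_X \neq 0$ and $w^n_X$ is NOT in the image of $i_X^*$, because if it were, $w^n$ would come from $H^n(B(X),Z_2)=H^n(X/G,Z_2)$, forcing $w^{n+1}=0$ in $H^*_G(X,Z_2)$ to be consistent with $\hind X = n$ — but in fact the precise obstruction is that $w^n_X$ generates the top nonzero power and is exactly the image of $w^n\in H^n(X/G,Z_2)$ along $p^*$, which is killed by $\delta$ only if $w^{n+1}=0$ in a way compatible with being pulled from $B(X)$; disentangling this is the main obstacle. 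I expect the cleanest route is the one in~\cite{volsce2005}: observe that $h$ extends $f$ over the ``cone direction'' $t\in[0,1]$, hence $h$ composed with the deformation retraction $B(X)\to (X\times\{1/2\})/G$ onto the fixed-point set would give, after restricting to $X$, a map $X\to Y$ that is $Z_2$-equivariantly nullhomotopic in a suitable sense, contradicting $\hind X = \hind Y = n$ via Lemma~\ref{index-bu} applied to a cleverly chosen auxiliary space (the join of $X$ with $S^0$, say, whose index is $n+1$). The hard part throughout is bookkeeping the precise relationship between the equivariant class $w$, the Euler class of the line bundle $B(X)\to X/G$, and the connecting map of the pair $(B(X),X)$ — once these three are identified, both conclusions are immediate.
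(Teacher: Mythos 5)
Your overall framework---the Gysin/Thom sequence of the line bundle $B(X)\to X/G$, together with the observation that a factorization $f=h\circ i_X$ forces the image of $f^*$ into $\mathop{\mathrm{Im}} i_X^*$, i.e.\ into the kernel of the connecting homomorphism of the pair $(B(X),X)$---is the same as the paper's. But the step that is supposed to produce a nonzero class in the image of $f^*$ is wrong, and it is the heart of both claims. You propose to take $p^*(w^n)\in H^n(X,Z_2)$, the pullback of $w^n$ under the double cover $p:X\to X/G$, and argue it is nonzero because ``$w^n$ is not a multiple of $w$ in the image of $\cup w$''. In fact $w^n=w\cup w^{n-1}$ is always in the image of $\cup w:H^{n-1}(X/G,Z_2)\to H^n(X/G,Z_2)$ once $n\ge 1$ (your sequence also misstates the domain of $\cup w$, but that is cosmetic), so by exactness of the Gysin sequence $p^*(w^n)=0$; concretely, for $X=S^n$ with the antipodal action the map $H^n(\mathbb RP^n,Z_2)\to H^n(S^n,Z_2)$ kills $w^n$. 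Thus the class you work with is zero and detects nothing. The correct class lives on the other leg of the sequence: since $\hind Y=n$ gives $w^{n+1}=0$ in $H^*(Y/G,Z_2)$, exactness produces $v\in H^n(Y,Z_2)$ whose transfer is $w^n$ (equivalently, $\delta_Y(v)=u_Yw_Y^n$ in the paper's Thom sequence for the pair $(B(Y),Y)$), and naturality of the transfer/coboundary under $f$ gives $\delta_X(f^*(v))=u_Xw_X^n\neq 0$ because $\hind X=n$; hence $f^*(v)\neq 0$. That is the paper's proof, and it finishes the second claim at once: $f=h\circ i_X$ would put $f^*(v)$ into $\mathop{\mathrm{Im}} i_X^*=\ker\delta_X$, contradicting $\delta_X(f^*(v))\neq 0$.

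Your second paragraph inherits the same gap: the sentence ``by the first part, $f^*(w_Y^n)=w_X^n\neq 0$ and $w_X^n$ is not in the image of $i_X^*$'' refers to a class which, with your definitions, is zero, so no contradiction follows from it. The fallback you sketch---composing $h$ with the retraction of $B(X)$ onto the midsection and contradicting the index of an auxiliary space such as $X*S^0$---is left entirely unproved and does not obviously work: the natural attempt to glue $h$ and $\sigma\circ h\circ\sigma$ into an equivariant map $X*S^0\to Y$ fails at the midlevel $t=1/2$ precisely because $Y$ is free, and no cohomological substitute is supplied. So, although you have located the right machinery, both assertions are left with a genuine gap that is repaired exactly by the transfer/coboundary computation above.
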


\begin{proof}
The spaces $X$ and $Y$ are obviously free. Consider the Thom exact sequences
$$
\ldots\xleftarrow{\delta_X} H^k(X, Z_2)\xleftarrow{i_X^*} H^k(B(X), Z_2)\xleftarrow{\pi_X^*} H^k(B(X), X, Z_2)\xleftarrow{\delta_X} H^{k-1}(X, Z_2)\ldots,
$$
$$
\ldots\xleftarrow{\delta_Y} H^k(Y, Z_2)\xleftarrow{i_Y^*} H^k(B(Y), Z_2)\xleftarrow{\pi_Y^*} H^k(B(Y), Y, Z_2)\xleftarrow{\delta_Y} H^{k-1}(Y, Z_2)\ldots.
$$
Note that $f$ gives a natural continuous map $B(X)\to B(Y)$, hence the above exact sequences are mapped into each other by $f^*$, which commutes with the exact sequence maps. Let the Thom classes in $H^1(B(X), X, Z_2)$ and $H^1(B(Y), Y, Z_2)$ be $u_X$ and $u_Y$; let the images of $w\in H_G^*(\mathrm{pt}, Z_2)$ in $H^*(B(X), Z_2)$ and $H^*(B(Y), Z_2)$ be $w_X$ and $w_Y$. It is clear that $f^*(u_Y) = u_X, f^*(w_Y) = w_X$. 

By the definition of index we have $\pi_Y^*(u_Xw_X^n) = w_X^{n+1}=0$ and $\pi_X^*(u_Yw_Y^n) = w_Y^{n+1}=0$, from the exactness there exists a class $v\in H^n(Y, Z_2)$ such that $\delta_Y(v) = u_Yw_Y^n$. We have $\delta_X(f^*(v)) = f^*(\delta_Y(v)) = u_Xw_X^n\neq 0$ and therefore $f^*(v)\neq 0$, the first claim of the theorem is proved.

To prove the second claim, note that the existence of $h$ would imply $f^*(v)\in\mathop{\mathrm{Im}} i_X^*$, and from the exactness of the Thom sequence $\delta_X(f^*(v)) = 0$, which contradicts the formulas in the previous paragraph.
\end{proof}

Now we are going to deduce a corollary from Theorem~\ref{equimaps} on coverings of $Z_2$-spaces. We need a definition first.

\begin{defn}
The points $x$ and $\sigma(x)$ of some $Z_2$-space $X$ are called \emph{antipodal}.
\end{defn}

The term ``antipodal'' was already defined for the points on the boundary of a convex compact set. But actually it does not lead to a confusion in the sequel.

\begin{thm}
\label{antipcover} 
Let a compact metric $Z_2$-space $X$ with $\hind X=n$ be covered by a family of closed sets $\mathcal F = \{U_1,U_2,\ldots,U_{n+2}\}$, so that none of $U_i$ contains a pair of antipodal points. Then for any partition of the family $\{U_i\}$ into non-empty subfamilies $\mathcal F_1$ and $\mathcal F_2$ there exists a point $x\in X$ such that 
$$
x\in\bigcap\mathcal F_1\quad\text{and}\quad \sigma(x)\in\bigcap\mathcal F_2.
$$ 
Moreover, if the covering $\mathcal F$ is induced by some closed covering $\mathcal G = \{V_1,V_2,\ldots,V_{n+2}\}$ of $B(X)$ then the family $\mathcal G$ has a common point.
\end{thm}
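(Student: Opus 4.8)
\emph{Proof sketch.} The plan is to extract from the hypotheses a single equivariant map of $X$ into an explicit $Z_2$-complex of index $n$, and then read off both conclusions from the failure of their negations. First I would replace the metric on $X$ by a $Z_2$-invariant one (replace $d$ by $d(\cdot,\cdot)+d(\sigma\cdot,\sigma\cdot)$), which does not change the topology; the non-antipodality hypothesis then says $U_i\cap\sigma U_i=\emptyset$, so for small $\epsilon>0$ the open $\epsilon$-neighbourhoods $U_i^\epsilon$ still cover the compact space $X$ while $U_i^\epsilon\cap\sigma U_i^\epsilon=\emptyset$. Choosing a partition of unity $\{\rho_i\}$ subordinate to $\{U_i^\epsilon\}$, the supports of $\rho(x)=(\rho_i(x))_i$ and of $\rho(\sigma x)$ are disjoint subsets of $[n+2]$, so
$$
\widehat\rho(x)=\tfrac12\,\rho(x)^{(1)}\oplus\tfrac12\,\rho(\sigma x)^{(2)}
$$
is a well-defined point of the ``equatorial'' subcomplex $E=\{\tfrac12 a^{(1)}\oplus\tfrac12 b^{(2)}:a,b\in\Delta^{n+1},\ \suppo a\cap\suppo b=\emptyset\}$ of the two-fold deleted join $(\Delta^{n+1})^{*2}_\Delta$, and $x\mapsto\widehat\rho(x)$ is a $Z_2$-map $X\to E$, where $Z_2$ interchanges the two copies. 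I would then check, from the standard facts about deleted joins of simplices, that $E$ is a closed $n$-manifold with free $Z_2$-action (the link of a vertex is the deleted join of an $(n-1)$-simplex, i.e. $S^{n-1}$) which is $Z_2$-homotopy equivalent to $S^n$ with the antipodal action; in particular $\hind E=n=\hind X$.

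For the first assertion I would argue by contradiction. If the partition $\mathcal F_1=\{U_i:i\in I_1\}$, $\mathcal F_2=\{U_j:j\in I_2\}$ is bad, then $\bigcap_{i\in I_1}U_i\cap\bigcap_{j\in I_2}\sigma U_j=\emptyset$, hence by compactness the same holds with $\epsilon$-neighbourhoods, i.e. no $x$ has all $\rho_i(x)>0$ ($i\in I_1$) and all $\rho_j(\sigma x)>0$ ($j\in I_2$). Since the supports of $\rho(x)$ and $\rho(\sigma x)$ are disjoint and, in that case, together fill $[n+2]=I_1\sqcup I_2$, this says precisely that $\widehat\rho(x)$ never lies in the relative interior of the top-dimensional cell $\{\tfrac12 a^{(1)}\oplus\tfrac12 b^{(2)}:\suppo a\subseteq I_1,\ \suppo b\subseteq I_2\}$ of $E$, nor, by equivariance, in the interior of the antipodal cell. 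Deleting two disjoint open top cells from the $n$-manifold $E\simeq S^n$ leaves a space $Z_2$-homotopy equivalent to $S^{n-1}$, so $\widehat\rho$ becomes a $Z_2$-map $X\to S^{n-1}$; then $\hind X\le n-1$ by Lemma~\ref{index-bu}, contradicting $\hind X=n$.

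The ``moreover'' is the analogue, for $B(X)$ in place of a ball and $X$ in place of its boundary sphere, of the Borsuk--Ulam theorem for coverings of the ball, and I would prove it in the same spirit. Assuming $\bigcap_i V_i=\emptyset$, I would first shrink the $V_i$ near $X$ (without destroying the covering of $B(X)$, and arranged so that $V_i^\epsilon\cap X$ lies in a small neighbourhood of $U_i$), and then take a partition of unity on $B(X)$ subordinate to $\{V_i^\epsilon\}$; as no point of $B(X)$ meets all the $V_i$, this yields a map $\Lambda:B(X)\to\partial\Delta^{n+1}$, and $f=\Lambda\circ i_X$ has the disjoint-support property with respect to $\sigma$, so $g(x)=\tfrac12 f(x)^{(1)}\oplus\tfrac12 f(\sigma x)^{(2)}$ is a $Z_2$-map $X\to E$. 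By the second part of Theorem~\ref{equimaps}, applicable since $\hind X=\hind E=n$, there is no map $h:B(X)\to E$ with $g=h\circ i_X$; but $\Lambda$, being defined on all of $B(X)$, should let me build such an $h$, by interpolating fibrewise along the arcs of $B(X)\to X/Z_2$ between the two ``halves'' that $\Lambda$ provides from the endpoints $i_X(x)$ and $i_X(\sigma x)$. This contradiction gives $\bigcap_i V_i\neq\emptyset$.

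I expect the genuine difficulty to be exactly this last construction: turning the covering $\mathcal G$ of $B(X)$ with empty intersection into an honest continuous extension over \emph{all} of $B(X)$ of the equivariant map $g$. The obstacle is the fixed-point locus $(X\times\{1/2\})/Z_2$ of the $Z_2$-action on $B(X)$, on which the naive antisymmetrisation formula cannot be evaluated (it maps into a \emph{free} $Z_2$-space), so the extension must be organised fibrewise and asymmetrically rather than by a closed formula. A lesser technical point, used throughout, is the identification up to $Z_2$-homotopy of $E$ and of the complement in $E$ of two antipodal top cells, equivalently the computation of their cohomological indices, which I would deduce from the structure of the deleted join $(\Delta^{n+1})^{*2}_\Delta\cong S^{n+1}$.
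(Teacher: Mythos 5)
Your proof of the first assertion is essentially the paper's argument in different clothing: your equatorial subcomplex $E$ of the deleted join is equivariantly homeomorphic to $S^n$ with the antipodal action via $\tfrac12 a\oplus\tfrac12 b\mapsto (a-b)/|a-b|$, and under this identification your map $\widehat\rho$ is exactly the normalized antisymmetrization $\rho(x)-\rho(\sigma x)$ used in the paper; the paper concludes by surjectivity of this map (nontriviality of $h^*$ on $H^n$, from Theorem~\ref{equimaps}) and hits one explicit point of $S^n$ per partition, while you argue by contradiction that the image avoids two antipodal open top cells. That variant is fine, but the step ``deleting two disjoint antipodal open top cells from $E$ leaves a space of index at most $n-1$'' deserves a sentence (e.g.\ the quotient of the complement is a compact connected $n$-manifold with nonempty boundary, so its $n$-th mod $2$ cohomology vanishes, or exhibit an explicit equivariant map to $S^{n-1}$); as stated it is an unproved, though true, claim.

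The genuine gap is in the ``moreover'' part, and it is exactly where you say it is: you reduce everything to producing a continuous $h:B(X)\to E$ with $g=h\circ i_X$ so as to contradict the second claim of Theorem~\ref{equimaps}, but you never produce it, and the fibrewise interpolation you gesture at cannot be written down, precisely because of the fixed-point locus $(X\times\{1/2\})/Z_2$: any antisymmetrization-type formula fails there, since $E$ is a free $Z_2$-space. The missing idea (the paper's) is not to extend $g$ on the nose but to change the target map on $B(X)$: since $\bigcap_i V_i^\epsilon=\emptyset$, the partition-of-unity map $\Lambda:B(X)\to\Delta^{n+1}$ misses the barycenter $b=(\tfrac1{n+2},\ldots,\tfrac1{n+2})$, so $h_1=(\Lambda-b)/|\Lambda-b|$ is defined on all of $B(X)$ with values in $S^n$; then one checks that on $X$ the maps $h$ (your $g$, read in $S^n$) and $h_1|_X$ are homotopic via $h_t(x)=\bigl(f(x)-(1-t)f(\sigma x)-tb\bigr)/\bigl|f(x)-(1-t)f(\sigma x)-tb\bigr|$, which is well defined thanks to the disjoint-support property and $f\neq b$. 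This already yields the contradiction, because the obstruction used in the proof of Theorem~\ref{equimaps} is $\delta_X(h^*(v))\neq 0$, and if $h\simeq h_1|_X$ then $h^*(v)=i_X^*(h_1^*(v))$ lies in the image of $i_X^*$, forcing $\delta_X(h^*(v))=0$; equivalently, one could use the homotopy extension property of the collar $X\subset B(X)$ to turn $h_1$ into an honest extension of $h$ and quote the theorem verbatim. So the statement you need is true and reachable, but the key step you left open is this homotopy/centering trick, not a direct fibrewise construction into the deleted join, which cannot work.
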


\begin{proof}

Suppose $W$ is a subset of some metric space $M$ and put
$$
W(\varepsilon) = \{x \in M : \dist(x, W) \le \varepsilon\}.
$$

From the compactness consideration we can assume that
$$
\exists\varepsilon>0 : \forall i=1,\ldots,n+2\ \dist(U_i,\sigma(U_i)) > 2\varepsilon.
$$
Consider a partition of unity $\{f_i\}_{i=1}^{n+2}$, subordinated to the covering by $U_i(\varepsilon)$, such that $f_i > 0$ on $U_i$.

Now consider the functions $g_i(x) = f_i(x) - f_i(\sigma(x))$. They together give an equivariant map $g : X\to \mathbb R^{n+2}$, the image of $g$ is contained in the hyperplane $H$, given by the equation $y_1+\ldots+y_{n+2} = 0$, and does not contain the origin. Thus the map $g$ induces the map $h : X\to S^n$ to the unit sphere of $H$, given by the formula $h(x) = g(x)/|g(x)|$.

The map $h : X\to S^n$ is equivariant and by Theorem~\ref{equimaps} the map $h^*: H^n(S^n, Z_2)\to H^n(X, Z_2)$ is nontrivial, hence $h$ must be surjective.

Take a partition $[n+2] = I_1\cup I_2$ and the corresponding partition 
$$
\{U_i(\varepsilon)\}_{i=1}^{n+2}=\mathcal F_1(\varepsilon)\cup\mathcal F_2(\varepsilon).
$$
Take the number $c = \sqrt{|I_1| |I_2| (n+2)}$ and consider a point $y\in S^n$ with coordinates
$$
y_i=\frac{|I_2|}{c}\ i\in I_1\quad y_i=-\frac{|I_1|}{c}\ i\in I_2.
$$

There exists a point $x\in X$ such that $y = h(x)$, hence $x\in\bigcap\mathcal F_1(\varepsilon)$, $\sigma(x)\in\bigcap\mathcal F_2(\varepsilon)$. Going to the limit with $\varepsilon\to 0$ and applying the compactness consideration we obtain the first claim of the theorem.

Now suppose that the covering is induced by a covering of $B(X)$, the functions $f_i$ give therefore a partition of unity on $B(X)$. They give the map $f$, that maps $X$ to the hyperplane $H_1$, given by the equation $y_1+\ldots+y_{n+2} = 1$. If the image of $f$ contains the point $(\frac{1}{n+2}, \ldots, \frac{1}{n+2})$ then, similar to the above reasoning the sets $\{V_i\}$ have a common point. Otherwise, $f$ gives a map $h_1 : B(X)\to S^n$ to the unit sphere of the hyperplane $H_1$. Let us show that the maps $h$ and $h_1|X$ are homotopy equivalent. Put for any $t\in [0, 1]$
$$
g_t(x) = f(x) - (1-t) f(\sigma(x))\quad h_t(x) = \frac{g_t(x) - t(\frac{1}{n+2}, \ldots, \frac{1}{n+2})}{|g_t(x) - t(\frac{1}{n+2}, \ldots, \frac{1}{n+2})|},
$$
then $h_t$ is the required homotopy. By the second claim of Theorem~\ref{equimaps} $h$ (and its homotopy equivalent $h_1$) cannot be extended from $X$ to $B(X)$ that is a contradiction.
\end{proof}

Let us prove another theorem on coverings.

\begin{thm}
\label{antipcover2} 
Let a compact metric $Z_2$-space $X$ with $\hind X=n$ be covered by a family of closed sets $\mathcal F = \{U_1,U_2,\ldots,U_N\}$. Suppose that none of $U_i$ contains a pair of antipodal points. Then there exists a point $x\in X$ such that the number of sets $U_i$, that contain either $x$ or $\sigma(x)$ is at least $n+2$.
\end{thm}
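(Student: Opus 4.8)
The plan is to imitate the proof of Theorem~\ref{antipcover}, replacing the "two-block" partition argument by a counting argument on the sphere $S^n$. As before, fix $\varepsilon>0$ so small that $\dist(U_i,\sigma(U_i))>2\varepsilon$ for all $i$, take a partition of unity $\{f_i\}_{i=1}^N$ subordinated to the open cover $\{U_i(\varepsilon)\}$ with $f_i>0$ on $U_i$, and form the equivariant map $g:X\to\mathbb R^N$, $g_i(x)=f_i(x)-f_i(\sigma(x))$. Its image lies in the hyperplane $H=\{y_1+\cdots+y_N=0\}$ and avoids the origin (if $g(x)=0$ for some $x$, then $x$ and $\sigma(x)$ lie in exactly the same sets $U_i(\varepsilon)$, hence $x\in U_i(\varepsilon)$ and $\sigma(x)\in U_i(\varepsilon)$ for every $i$ with $x\in U_i$, contradicting $\dist(U_i,\sigma(U_i))>2\varepsilon$ — I will check this carefully). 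So $g$ induces an equivariant map $h=g/|g|:X\to S^{N-2}$, the unit sphere of $H$ with the antipodal action.

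Now I want a point $x$ for which many coordinates of $g(x)$ are positive, because $g_i(x)>0$ forces $x\in U_i(\varepsilon)$ while $g_i(x)<0$ forces $\sigma(x)\in U_i(\varepsilon)$; either way $U_i(\varepsilon)$ meets $\{x,\sigma(x)\}$, and $U_i(\varepsilon)$ meets $\{x,\sigma(x)\}$ whenever $g_i(x)\neq 0$. Since $\sum_i g_i(x)=0$, at least one coordinate is positive and at least one is negative, so it suffices to find $x$ with at least $n+1$ nonzero coordinates — then at least $n+1$ sets $U_i(\varepsilon)$ meet $\{x,\sigma(x)\}$, and since at least one of the remaining coordinates has the opposite sign, we actually get $n+2$. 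Concretely: the set of $y\in S^{N-2}$ with at most $n$ nonzero coordinates is a union of great spheres of dimension $\le n-1$; call its union $\Sigma$. If $h(X)\cap\Sigma=\varnothing$, then $h$ factors through $S^{N-2}\setminus\Sigma$, which deformation retracts onto a space equivariantly homotopy equivalent to a sphere of dimension $< n$ (the "skeleton" swept out by the standard CW structure of $S^{N-2}$ with the antipodal action), giving an equivariant map $X\to S^{n-1}$ and hence, by Lemma~\ref{index-bu}, $\hind X\le n-1$, a contradiction. Therefore some $h(x)$ has at least $n+1$ nonzero coordinates, and the preceding remark yields $n+2$ sets $U_i(\varepsilon)$ meeting $\{x,\sigma(x)\}$.

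Finally I let $\varepsilon\to 0$: for a sequence $\varepsilon_m\to 0$ get points $x_m$ and (passing to a subsequence) a fixed index set $J$, $|J|\ge n+2$, with $U_i(\varepsilon_m)\cap\{x_m,\sigma(x_m)\}\neq\varnothing$ for $i\in J$. By compactness $x_m\to x$ after a further subsequence, and since the $U_i$ are closed, each $U_i$ with $i\in J$ meets $\{x,\sigma(x)\}$, proving the theorem.

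The step I expect to be the main obstacle is making the homotopy-theoretic claim about $S^{N-2}\setminus\Sigma$ precise and equivariant: I need that the complement of the union of coordinate great subspheres of dimension $\le n-1$ in $S^{N-2}$ admits an equivariant map to a sphere of dimension exactly $n-1$ (so that its index is $n-1$). The clean way is to use the standard $Z_2$-CW structure on $S^{N-2}$ coming from the join decomposition $S^{N-2}=S^0*\cdots*S^0$ ($N-1$ copies): a point with $\le n$ nonzero coordinates lies in the $(n-1)$-skeleton, so $S^{N-2}\setminus(\text{interior of top cells over fewer than }n+1\text{ indices})$ equivariantly deformation retracts onto an $(n-1)$-dimensional subcomplex, whose index is at most $n-1$ by dimension. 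Composing $h$ (assumed to miss $\Sigma$) with this retraction contradicts $\hind X=n$. I will state this as a small lemma and verify the retraction is $Z_2$-equivariant.
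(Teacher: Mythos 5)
There is a genuine gap, in two places. First, the counting is off by one. A nonzero coordinate $g_i(x)\neq 0$ gives you exactly one set $U_i(\varepsilon)$ meeting $\{x,\sigma(x)\}$, so to produce $n+2$ such sets you need $n+2$ nonzero coordinates, not $n+1$. Your patch ``at least one of the remaining coordinates has the opposite sign'' is vacuous: the remaining coordinates are zero (indeed $g_i(x)=0$ forces $f_i(x)=f_i(\sigma(x))=0$, because the supports of $f_i$ and $f_i\circ\sigma$ are disjoint when $\dist(U_i,\sigma(U_i))>2\varepsilon$), and the fact that both signs already occur among the $n+1$ nonzero coordinates does not yield an extra index. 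Second, and more seriously, the topological step is inverted and its key claim is false. The hypothesis $h(X)\cap\Sigma=\varnothing$ is precisely the conclusion you are after (every $h(x)$ has many nonzero coordinates), so deriving a contradiction from it defeats your own argument; and the claim that $S^{N-2}\setminus\Sigma$ equivariantly retracts onto something of dimension, hence index, less than $n$ is wrong: removing a low-dimensional union of coordinate subspheres leaves a large set. For instance with $N=3$, $n=1$ one has $\Sigma=\varnothing$ and $S^{N-2}\setminus\Sigma=S^{1}$, of index $1$, not $\le 0$. What has small index is $\Sigma$ itself (for support size $t$ the corresponding subsphere of the hyperplane $H$ has dimension $t-2$), not its complement; your appeal to the join CW structure likewise confuses removing low cells with retracting onto them.

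The correct route, which is the paper's, is the contrapositive with the right threshold: assume that for every $x$ at most $n+1$ coordinates of $g(x)$ are nonzero. Since the positive coordinates of $g(x)$ sum to $1$ and the negative ones to $-1$, the image of $g$ then lies in the union, over pairs of disjoint nonempty index sets $P,Q\subseteq[N]$ with $|P|+|Q|\le n+1$, of the sets $\{y:\ y_i\ge 0\ (i\in P),\ \sum_{i\in P}y_i=1,\ y_j\le 0\ (j\in Q),\ \sum_{j\in Q}y_j=-1,\ y_l=0\ \text{otherwise}\}$, each a product of simplices of dimension $|P|+|Q|-2\le n-1$. This union is a compact free $Z_2$-complex of dimension at most $n-1$, so its index is at most $n-1$, contradicting $\hind X=n$ via Lemma~\ref{index-bu}. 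Hence for every $\varepsilon$ there is a point whose $g$-image has at least $n+2$ nonzero coordinates, i.e.\ at least $n+2$ of the $U_i(\varepsilon)$ meet $\{x,\sigma(x)\}$; your final $\varepsilon\to 0$ compactness argument (which is fine, and usefully makes explicit a step the paper leaves implicit) then finishes the proof.
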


\begin{proof}
As in the previous theorem, consider the corresponding partition of unity $f_i: X\to \mathbb R^+$. Consider the map $g(x) = f(x) - f(\sigma(x))$ and assume the contrary, i.e. for any $x\in X$ at most $n+1$ of the coordinates of $g(x)$ are nonzero.

Since the sum of the positive coordinates $g_i(x)$ is $1$, the sum of negative coordinates is $-1$, the image of $g$ is contained in some $n-1$-dimensional simplicial complex. This complex has free $Z_2$-action, and its index is at most $n-1$ from the dimension considerations, thus we have a contradiction with Lemma~\ref{index-bu}.
\end{proof}

Let us state a theorem, that strengthens the Lyusternik-Schnirelmann theorem on the category of $\mathbb RP^n$.

\begin{defn}
An invariant subset $U\subseteq X$ of a $Z_2$-space $X$ is called \emph{inessential}, if $\hind U=0$.
\end{defn}

Note that the index of $U$ is zero, iff there exists an equivariant map of $U$ to the zero-dimensional sphere (a pair of points), here the continuity of cohomology is used.

\begin{thm}
\label{ls}
Let a compact metric $Z_2$-space $X$ with $\hind X=n$ be covered by a family of closed inessential invariant subsets $\mathcal F = \{U_1,U_2,\ldots,U_N\}$. Then $N\ge n+1$ and some $n+1$ sets of the family $\mathcal F$ have a common point.
\end{thm}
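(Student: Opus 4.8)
The plan is to argue by contradiction: assume that no $n+1$ members of $\mathcal F$ have a common point. This assumption is automatically satisfied when $N\le n$ (there are not even $n+1$ distinct sets to choose), so the same argument will simultaneously yield $N\ge n+1$. In other words, I assume the covering $\{U_i\}$ has multiplicity at most $n$, i.e.\ every point of $X$ lies in at most $n$ of the $U_i$, and I want to produce an equivariant map from $X$ into a free $Z_2$-complex of dimension $n-1$. Such a complex has $\hind\le n-1$, so Lemma~\ref{index-bu} contradicts $\hind X=n$.

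First I would normalize: since $\hind X=n$ is finite, $X$ is free, and replacing the metric by $d(x,y)+d(\sigma x,\sigma y)$ we may assume $\sigma$ acts by isometries. For each $i$, the inessentiality of $U_i$ gives an equivariant map $\phi_i:U_i\to S^0$, which splits $U_i=U_i^+\sqcup U_i^-$ into disjoint compact sets with $\sigma U_i^\pm=U_i^\mp$. For a small parameter $r>0$ put
$$
W_i=\{x\in X:\dist(x,U_i^+)<r\}\cup\{x\in X:\dist(x,U_i^-)<r\},
$$
an open invariant neighbourhood of $U_i$; the $W_i$ still cover $X$, and a routine compactness argument shows that if $r$ is small enough, then no $n+1$ of the $W_i$ have a common point either. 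On all of $X$ define
$$
\psi_i(x)=\max(-1,\min(1,\lambda_i(\dist(x,U_i^-)-\dist(x,U_i^+)))).
$$
Since $\sigma$ is an isometry swapping $U_i^+$ and $U_i^-$, the function $\psi_i$ is odd and continuous, and for $\lambda_i$ large it equals $+1$ on a neighbourhood of $U_i^+$ and $-1$ on a neighbourhood of $U_i^-$; after shrinking $r$ we may assume $|\psi_i|\equiv 1$ on $W_i$.

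Next take an invariant partition of unity $\{f_i\}$ on $X$ subordinate to $\{W_i\}$ (pulled back from $X/Z_2$, so that $\operatorname{supp}f_i\subseteq W_i$), and set $g_i=f_i\psi_i$ and $g=(g_1,\dots,g_N):X\to\mathbb R^N$. Each $f_i$ is invariant and each $\psi_i$ is odd, so $g$ is equivariant for the antipodal action on $\mathbb R^N$. On $\operatorname{supp}f_i\subseteq W_i$ we have $|\psi_i|\equiv 1$, hence $\sum_i|g_i(x)|=\sum_i f_i(x)=1$ for every $x$, so $g$ maps $X$ into the boundary of the cross-polytope $C=\conv\{\pm e_1,\dots,\pm e_N\}$. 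Moreover $g_i(x)\ne 0$ forces $f_i(x)\ne 0$, hence $x\in W_i$, which happens for at most $n$ indices $i$; thus $g(x)$ has at most $n$ nonzero coordinates and lies in the $(n-1)$-skeleton $C_{n-1}$ of $\partial C$. The complex $C_{n-1}$ misses the origin and the antipodal map fixes no simplex of it, so it is a free $Z_2$-space of dimension $n-1$; therefore $H^*_{Z_2}(C_{n-1};Z_2)=H^*(C_{n-1}/Z_2;Z_2)$ vanishes in degrees $>n-1$ and $\hind C_{n-1}\le n-1$. Applying Lemma~\ref{index-bu} to the equivariant map $g:X\to C_{n-1}$ gives $n=\hind X\le n-1$, a contradiction. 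Hence some $n+1$ members of $\mathcal F$ have a common point, and in particular $N\ge n+1$.

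The conceptual core is short — an odd map onto an $(n-1)$-dimensional free complex cannot exist on a space of index $n$ — and it mirrors the proof of Theorem~\ref{antipcover2}; the new ingredient is that inessentiality of the $U_i$ supplies exactly the odd ``sign'' functions $\psi_i$ needed to turn the nerve map of the covering into an equivariant one. The main (though essentially routine) obstacle is the bookkeeping in the construction of the $W_i$ and $\psi_i$: each $W_i$ must be chosen small enough to preserve multiplicity $\le n$ yet compatible with $\psi_i$ so that $|\psi_i|\equiv 1$ on $\operatorname{supp}f_i$, and one must check that $g$ is continuous and lands where claimed.
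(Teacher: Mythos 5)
Your proof is correct and follows essentially the same route as the paper: the sign maps $\psi_i$ coming from inessentiality, cut off by an invariant partition of unity, are exactly a spelled-out version of the paper's equivariant extensions of $f_i:U_i\to\{-1,+1\}$ vanishing outside small neighborhoods, and the contradiction is the same — if no $n+1$ sets meet, the resulting odd map lands in an $(n-1)$-dimensional free complex (you use the skeleton of the cross-polytope, the paper the corresponding subset of $S^{N-1}$), contradicting $\hind X=n$ by monotonicity of the index. The only cosmetic difference is that you obtain $N\ge n+1$ as a byproduct of the same contradiction, whereas the paper derives it directly from the map to $S^{N-1}$.
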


\begin{proof}
For any $U_i$ we have a $Z_2$-equivariant map $f_i: U_i\to \{-1, +1\}$. Extend $f_i$ to an equivariant map $f_i : X\to [-1, +1]$ so that $f_i$ is zero outside some $\varepsilon$-neighborhood of $U_i$. Finally, the functions $f_i$ give an equivariant map $f: X\to \mathbb R^N\setminus\{0\}$, hence an equivariant map $h : X\to S^{N-1}$ can be defined by $h(x) = f(x)/|f(x)|$. By Lemma~\ref{index-bu} $n\le N-1$ and the first claim is proved.

To prove the second claim assume the contrary: for sufficiently small $\varepsilon$ it would mean that at most $n$ of the coordinates of $h(x)$ can be nonzero. Thus the image if $h$ is contained in some $n-1$-dimensional subset of $S^{N-1}$, that contradicts with Lemma~\ref{index-bu}.
\end{proof}

Another analogue of Theorem~\ref{antipcover} can be proved for a product of $Z_2$-spaces (see Theorem~2 from~\cite{kar2007}).

\begin{thm}
\label{antipcover3} 
Let compact metric $Z_2$-spaces $X$ and $Y$ have indexes $\hind X= n,\ \hind Y= m,\ m\ge n$.

Let the set $B(X)\times B(Y)$ be covered by a family of closed subsets $\mathcal F = \{V_{i j}\}_{i\in[n+2], j\in [m+2]}$. Denote the projections $\pi_X : B(X)\times B(Y)\to B(X),\ \pi_Y B(X)\times B(Y)\to B(Y)$.

Suppose that for any $i\in[n+2]$ the set $\pi_X(\bigcup_{j\in [m+2]}V_{i j})\cap X$ does not contain a pair of antipodal points, and for any $j\in[m+2]$ the set $\pi_Y(\bigcup_{i\in [n+2]}V_{i j})\cap Y$ does not contain a pair of antipodal points either. 

Let $\{a_i\}_{i\in [n+2]}$ be positive integers with sum equal to $m+2$. Then there exists a map  $\tau : [m+2]\to [n+2]$ such that
$$
\forall i\in[n+2]\ |\tau^{-1}(i)| = a_i\quad\text{and}\quad \bigcap_{j\in[m+2]} U_{\tau(j) j}\neq\emptyset.
$$
\end{thm}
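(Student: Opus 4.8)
The plan is to imitate the second (``moreover'') part of Theorem~\ref{antipcover}: replace the covering by a partition of unity, extract from it an equivariant map into a sphere, and contradict the non-extension statement of Theorem~\ref{equimaps}.

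Write $\sigma_X,\sigma_Y$ for the canonical $Z_2$-actions on $B(X),B(Y)$, and put $W_i=\bigcup_{j}V_{ij}$, $W^j=\bigcup_i V_{ij}$. First, by a compactness argument identical to the one in the proof of Theorem~\ref{antipcover}, we fix a small $\varepsilon>0$, pass to the $\varepsilon$-neighbourhoods $V_{ij}(\varepsilon)$ (still a covering), retain the no-antipodal hypotheses after $\varepsilon$-fattening of the $W_i$ and $W^j$ and intersection with $X$, resp.\ $Y$ (here one uses that $\pi_X,\pi_Y$ may be taken $1$-Lipschitz and that $\pi_X(W_i),\pi_Y(W^j)$ are compact), and choose a partition of unity $\{f_{ij}\}$ on $B(X)\times B(Y)$ subordinate to $\{V_{ij}(\varepsilon)\}$ with $f_{ij}>0$ on $V_{ij}$. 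Letting $\varepsilon\to 0$ afterwards, it suffices to produce a point $p\in B(X)\times B(Y)$ and a map $\tau$ with $|\tau^{-1}(i)|=a_i$ such that $f_{\tau(j)j}(p)>0$ for every $j$.

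Next I form two equivariant maps. With $F_i=\sum_j f_{ij}$ and $G_j=\sum_i f_{ij}$ the row and column marginals of the matrix $(f_{ij})$, symmetrize over the other variable: $\widetilde F_i(x,y)=\tfrac12\bigl(F_i(x,y)+F_i(x,\sigma_Y y)\bigr)$ and $\widetilde G_j(x,y)=\tfrac12\bigl(G_j(x,y)+G_j(\sigma_X x,y)\bigr)$. Restricted to $X\times Y$, the vector $\bigl(\widetilde F_i(x,y)-\widetilde F_i(\sigma_X x,y)\bigr)_i$ has zero coordinate sum, is $\sigma_Y$-invariant, changes sign under $\sigma_X$, and (using the no-antipodal hypothesis for the $W_i$ exactly as in Theorem~\ref{antipcover}) never vanishes; symmetrically for $\bigl(\widetilde G_j(x,y)-\widetilde G_j(x,\sigma_Y y)\bigr)_j$. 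Normalizing, we get a $\sigma_X$-equivariant $h_X\colon X\times Y\to S^n$ and a $\sigma_Y$-equivariant $h_Y\colon X\times Y\to S^m$ (with the complementary action trivial on the respective sphere).

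Now assume the conclusion fails, and consider $f=(f_{ij})\colon B(X)\times B(Y)\to\Delta^{(n+2)(m+2)-1}$. By Hall's theorem for transportation matrices with margins $(a_1,\dots,a_{n+2})$ and $(1,\dots,1)$, failure means that for every $p$ the support $\{(i,j):f_{ij}(p)>0\}$ spans no face of $\Delta^{(n+2)(m+2)-1}$ carrying such a matrix, so $f$ takes values in the subcomplex $\Delta_{\mathrm{bad}}$ of the remaining (``bad'') faces. The goal is then to build a continuous map $\rho\colon|\Delta_{\mathrm{bad}}|\to S^n$ with $\rho\circ f|_{X\times Y}$ homotopic to $h_X$ — the homotopy coming, as in Theorem~\ref{antipcover}, from the linear deformations $f-(1-t)\,f\circ\sigma_X$, $f-(1-t)\,f\circ\sigma_Y$ shifted towards barycenters. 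Given such a $\rho$, the map $\rho\circ f$ restricts, over $B(X)\times\{i_Y(y_0)\}$ for a fixed $y_0\in Y$, to a continuous map $B(X)\to S^n$ whose restriction to $X$ is homotopic to $h_X|_{X\times\{y_0\}}$; since the latter is $\sigma_X$-equivariant and $\hind X=\hind S^n=n$, this contradicts the second claim of Theorem~\ref{equimaps}.

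The main obstacle is the construction of $\rho$: one must understand the subcomplex $\Delta_{\mathrm{bad}}$ — a complex of ``chessboard'' type whose faces are controlled by the König/Hall cross-covers for transportation matrices with the prescribed margins $a_i$ — well enough to map it to $S^n=\partial\Delta^{n+1}$ compatibly with the symmetrized-difference map $h_X$. The remaining ingredients (the reduction, the maps $h_X,h_Y$, and the extension argument) are routine, and a parallel product argument is carried out in~\cite{kar2007}.
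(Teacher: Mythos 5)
Your plan has a genuine gap at exactly the step you yourself flag as ``the main obstacle'': the construction of the map $\rho\colon|\Delta_{\mathrm{bad}}|\to S^n$ is never carried out, and nothing in the proposal indicates why such a map (compatible, after the deformation, with the symmetrized map $h_X$) should exist. The subcomplex of faces of $\Delta^{(n+2)(m+2)-1}$ whose supports admit no transportation plan with margins $(a_1,\dots,a_{n+2};1,\dots,1)$ is a nontrivial chessboard-type object; a Hall/K\"onig defect only produces a deficient set of rows, not a canonical point of $\partial\Delta^{n+1}$, and turning that combinatorial data into a continuous retraction with controlled index is precisely the hard content that is missing. Moreover, your intended contradiction restricts the hoped-for extension to a slice $B(X)\times\{i_Y(y_0)\}$, which discards the column hypothesis altogether; but the column balancing is not decorative, since a point where only the row marginals equal $a_i/(m+2)$ need not satisfy the Hall condition (all the mass of a set of rows could sit in a few heavy columns). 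So even granting a map out of $\Delta_{\mathrm{bad}}$, the route through a single factor is suspect.

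The paper avoids all of this by never assuming the conclusion fails. It takes the two marginal maps $f=(\sum_j\phi_{ij})_i\colon B(X)\times B(Y)\to\Delta^{n+1}$ and $g=(\sum_i\phi_{ij})_j\colon B(X)\times B(Y)\to\Delta^{m+1}$, shows via the Thom-class computation in the proof of Theorem~\ref{equimaps} that $i_X^*\circ f^*$ and $i_Y^*\circ g^*$ are nontrivial on top relative cohomology (this is where the two no-antipodal hypotheses enter, one for each factor), and concludes by the K\"unneth formula that $f\times g$ is onto $\Delta^{n+1}\times\Delta^{m+1}$; in particular some $p$ is sent to the balanced point $\bigl(\frac{a_1}{m+2},\dots,\frac{a_{n+2}}{m+2}\bigr)\times\bigl(\frac{1}{m+2},\dots,\frac{1}{m+2}\bigr)$. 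At such a $p$ the Hall condition is automatic: any set of rows $I'$ carries mass $\sum_{i\in I'}a_i/(m+2)$ while each column carries exactly $1/(m+2)$, so at least $\sum_{i\in I'}a_i$ columns meet the support of $(\phi_{ij}(p))$, and Lemma~\ref{hall} produces $\tau$ directly. Your ingredients (partition of unity, marginals, equivariant difference maps, Hall for prescribed margins, Theorem~\ref{equimaps}) are the right ones, but the decisive idea — surjectivity of the product of marginal maps onto the product of simplices, evaluated at the point where both margins are balanced so that Hall's condition holds for free — is absent, and without it the argument does not close.
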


We need the generalized Hall theorem~\cite{harary1994} on matchings. Here we denote the positive integers $\mathbb N = \{i\in \mathbb Z : i \ge 1 \}$.

\begin{lem}
\label{hall}
Consider a bipartite graph with vertices $V\cup W$, where $|V| = n$, $|W|=m$ $(m\ge n)$, and a map $a: V\mapsto \mathbb N$, where $\sum_{v\in V} a(v) = m$. Suppose that for any non-empty subset $V'\subseteq V$ the number of vertices in $W$, connected to some vertex in $V'$, is at least $\sum_{v\in V'} a(v)$. Then there exists a map $\tau: W\mapsto V$ such that $\forall w\in W$ the pair $(w, \tau(w))$ is an edge and 
$$
\forall v\in V\quad |\tau^{-1}(v)| = a(v).
$$
\end{lem}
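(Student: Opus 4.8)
The plan is to reduce the statement to the classical Hall marriage theorem (the case $a\equiv 1$, i.e.\ the existence of a perfect matching in a balanced bipartite graph satisfying the defect-free Hall condition). First I would build an auxiliary bipartite graph $\tilde G$ on the vertex classes $\tilde V\cup W$, where $\tilde V=\{(v,\ell) : v\in V,\ 1\le\ell\le a(v)\}$ consists of $a(v)$ disjoint copies of each $v\in V$, and the edge $(v,\ell)w$ is declared present in $\tilde G$ precisely when $vw$ is an edge of the original graph. Since $|\tilde V|=\sum_{v\in V}a(v)=m=|W|$, the two sides of $\tilde G$ have equal size, so a perfect matching of $\tilde G$ projects to the desired map $\tau$: put $\tau(w)=v$ when $w$ is matched to one of the copies $(v,\ell)$. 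The condition $|\tau^{-1}(v)|=a(v)$ is then automatic, because exactly the $a(v)$ copies of $v$ are matched, each to a distinct vertex of $W$.

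Next I would verify Hall's condition for $\tilde G$. Take any non-empty $S\subseteq\tilde V$ and let $V'\subseteq V$ be its shadow, i.e.\ the set of those $v$ having at least one copy in $S$. By construction the neighbourhood of $S$ in $W$ coincides with $N(V')$, the set of vertices of $W$ joined to some vertex of $V'$ in the original graph, and $|S|\le\sum_{v\in V'}a(v)$. The hypothesis of the lemma gives $|N(V')|\ge\sum_{v\in V'}a(v)\ge|S|$, so the inequality $|N_{\tilde G}(S)|\ge|S|$ holds for every $S$. The classical Hall theorem then yields a perfect matching of $\tilde G$, and passing to $\tau$ as above completes the proof.

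If a self-contained argument avoiding the split-vertex construction were preferred, the same result follows by induction on $m$, imitating the standard inductive proof of Hall's theorem: either every proper non-empty $V'\subsetneq V$ satisfies the strict inequality $|N(V')|>\sum_{v\in V'}a(v)$, in which case one matches an arbitrary edge $vw$, decreases $a(v)$ by one (deleting $v$ if $a(v)$ drops to $0$), deletes $w$, and checks that the hypothesis persists; or some proper $V'$ is tight, in which case the problem splits along $V'$ and $N(V')$ and the inductive hypothesis applies to both pieces. There is no essential obstacle here; the only point needing a little care is the bookkeeping in the tight case, namely that both sub-instances again satisfy the counting hypothesis. I would present the reduction to Hall's theorem as the main line, since it is the shortest.
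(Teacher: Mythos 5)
Your proposal is correct, and its main line is exactly the paper's argument: the paper also reduces the lemma to the ordinary Hall theorem by splitting each vertex $v\in V$ into $a(v)$ copies, only without spelling out the verification of Hall's condition that you supply. Nothing further is needed.
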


This lemma is reduced to the ordinary Hall theorem, if every vertex $v\in V$ is split into $a(v)$ new vertices.

\begin{proof}[Proof of Theorem~\ref{antipcover3}]
Similar to the previous proofs, let us pass to the partition of unity $\phi_{i j} : B(X)\times B(Y)\to\mathbb R^+$.

Consider the functions $f_i = \sum_{j\in[m+2]} \phi_{i j}$ and $g_j = \sum_{i\in[n+2]} \phi_{i j}$. Similar to the proof of Theorem~\ref{antipcover}, the maps $f$ and $g$ can be considered as maps of $B(X)\times B(Y)$ to $n+1$ and $m+1$-dimensional simplexes respectively. Denote the inclusions
$$
i_X : B(X)\to B(X)\times B(Y)\quad i_X(x) = x\times y_0
$$
and
$$
i_Y : B(Y)\to B(X)\times B(Y)\quad i_Y(y) = x_0\times y.
$$
From the proof of Theorem~\ref{equimaps} it follows that the maps
$$
i_X^*\circ f^* : H^{n+1}(\Delta^{n+1}, \partial\Delta^{n+1}, Z_2)\to H^{n+1}(B(X), X, Z_2))
$$
and
$$
i_Y^*\circ g^* : H^{m+1}(\Delta^{m+1}, \partial\Delta^{m+1}, Z_2)\to H^{m+1}(B(Y), Y, Z_2))
$$ 
are nontrivial, and by the K\"unneth formula the following map
\begin{multline*}
(f\times g)^* : H^{n+m+2} (\Delta^{n+1}\times\Delta^{m+1}, \partial\left(\Delta^{n+1}\times\Delta^{m+1}\right), Z_2) \to\\ \to H^{n+m+2}(B(X)\times B(Y), X\times B(Y)\cup B(X)\times Y, Z_2)
\end{multline*}
is nontrivial, and therefore $f\times g$ is surjective. 

Consider the preimage of
$$
\left(\frac{a_1}{m+2}, \ldots, \frac{a_{n+2}}{m+2}\right)
\times\left(\frac{1}{m+2}, \ldots, \frac{1}{m+2}\right)\in \Delta^{n+1}\times\Delta^{m+1},
$$
denote it $p$. The matrix $\{\phi_{i j}(p)\}$ (considered as the bipartite graph incidence matrix) satisfies the conditions of Lemma~\ref{hall}, that gives the required map $\tau$, compare~\cite{kar2006}.
\end{proof}

\section{Geometry and topology of the space of $k$-flats}

Let us describe the space of all $k$-flats in $\mathbb R^n$. For any $k$-flat $\alpha$ there is a unique $(n-k)$-dimensional linear subspace of $\mathbb R^n$, orthogonal to $\alpha$, denote it $g(\alpha)$, and a unique intersection point $\alpha\cap g(\alpha)$. Thus the space of $k$-flats is parameterized by the total space $\gamma_n^{n-k}$ of the canonical vector bundle $\gamma_n^k\to G_n^{n-k}$. In the sequel we identify the space of $k$-flats with $\gamma_n^{n-k}$, thus introducing the topology on the space of $k$-flats.

In any vector bundle the group $Z_2$ acts by the fiber-wise map $x\mapsto -x$. Let us calculate the index of the space of spheres $S(\gamma_n^{n-k})$ under this action.

\begin{thm}
\label{indsg} $\hind S(\gamma_n^{n-k}) = n-1$.
\end{thm}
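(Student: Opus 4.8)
I want to compute $\hind S(\gamma_n^{n-k})$, the cohomology index (largest power of $w$ nonzero in $H_{Z_2}^*$) of the sphere bundle of the canonical bundle $\gamma_n^{n-k}\to G_n^{n-k}$ under the fiberwise antipodal action. The strategy is to establish the inequality in both directions separately.

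\emph{Lower bound $\hind S(\gamma_n^{n-k})\ge n-1$.} The key is to exhibit a $Z_2$-subspace of $S(\gamma_n^{n-k})$ whose index we already know, and invoke the monotonicity Lemma~\ref{index-bu}. The natural candidate is to restrict the bundle over a suitable subspace of the Grassmannian. Concretely, fix a flag and consider a subGrassmannian, or — cleanest — observe that $S^{n-1}\subset\mathbb R^n$ with the antipodal action admits a $Z_2$-equivariant map into $S(\gamma_n^{n-k})$: send a unit vector $v\in S^{n-1}$ to the pair (the line $\mathbb R v$ viewed inside some fixed $(n-k)$-plane containing it, together with the unit vector $v$ in that fiber). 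More carefully, pick any continuous choice assigning to each line $\ell$ an $(n-k)$-subspace $L(\ell)\supseteq\ell$ — this need not exist globally, so instead I would argue as follows: the restriction of $S(\gamma_n^{n-k})$ to the subspace of $(n-k)$-planes containing a fixed line, or better, use that over $\mathbb{RP}^{n-1}=G_n^1$ embedded appropriately one gets the sphere $S^{n-1}$. The honest approach: the Stiefel manifold $V_n^{n-k}$ of orthonormal $(n-k)$-frames maps to $G_n^{n-k}$, and the first vector of the frame gives an equivariant section; composing with $S^{n-1}\hookrightarrow V_n^{n-k}$ appropriately yields a $Z_2$-map $S^{n-1}\to S(\gamma_n^{n-k})$ (first basis vector $\mapsto$ its span plus that unit vector). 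Since $\hind S^{n-1}=n-1$, monotonicity gives $\hind S(\gamma_n^{n-k})\ge n-1$.

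\emph{Upper bound $\hind S(\gamma_n^{n-k})\le n-1$.} Here I would use the Thom/Gysin machinery as in the proof of Theorem~\ref{equimaps}. The quotient $S(\gamma_n^{n-k})/Z_2$ fibers over $G_n^{n-k}$ with fiber $\mathbb{RP}^{n-k-1}$, and $H_{Z_2}^*(S(\gamma_n^{n-k}),Z_2)=H^*(S(\gamma_n^{n-k})/Z_2,Z_2)$. The class $w$ is the first Stiefel-Whitney class of the associated line bundle, which restricts on each $\mathbb{RP}^{n-k-1}$ fiber to its generator; hence $w^{n-k}=0$ would be too strong, but we get a relation from the Gysin sequence of the sphere bundle: $w$ satisfies a monic polynomial of degree $n-k$ with coefficients $w_i(\gamma_n^{n-k})$, i.e. $\sum_{i=0}^{n-k} w_i(\gamma_n^{n-k})\,w^{n-k-i}=0$. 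Thus $w^{n-k}$ is a combination of lower powers with Grassmannian coefficients; iterating, $w^N$ for $N\ge n-k$ lies in the ideal generated by positive-degree $w_i(\gamma_n^{n-k})$, which live in $H^*(G_n^{n-k},Z_2)$. The top nonvanishing such product is governed by the structure of $H^*(G_n^{n-k},Z_2)$: the largest $j$ with $w^j\ne 0$ in this quotient is exactly $n-1$, since $w^{n-1}$ corresponds to the top class (the product of all Stiefel-Whitney-type classes summing to dimension of $G_n^{n-k}$ plus the fiber contribution), and $w^n=0$ by a dimension/relation count. Equivalently: $S(\gamma_n^{n-k})$ admits a $Z_2$-map to $S^{n-1}$ — for instance, fiberwise a nonzero section of $\gamma_n^{n-k}\oplus(\text{complement})$ does not exist, but one can map into $\mathbb R^n\setminus 0$ via the inclusion $S(\gamma_n^{n-k})\subset\mathbb R^n$ of each fiber sphere into the ambient $\mathbb R^n$; this is exactly the equivariant map $S(\gamma_n^{n-k})\to S^{n-1}$ given by $(L,v)\mapsto v\in S^{n-1}$, and monotonicity yields $\hind S(\gamma_n^{n-k})\le\hind S^{n-1}=n-1$.

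\emph{Main obstacle.} The two directions actually collapse to one clean idea: the tautological equivariant map $S(\gamma_n^{n-k})\to S^{n-1}$, $(L,v)\mapsto v$, combined with an equivariant section-type map $S^{n-1}\to S(\gamma_n^{n-k})$ in the reverse direction; sandwiching via Lemma~\ref{index-bu} forces $\hind S(\gamma_n^{n-k})=n-1$. The subtle point I expect to spend the most care on is constructing the reverse map $S^{n-1}\to S(\gamma_n^{n-k})$ equivariantly: one cannot globally choose an $(n-k)$-plane through each line, so I would instead route through the Stiefel manifold $V_{n,n-k}$ (which is $(n-k-1)$-connected, so $\ge (n-1)$-connected is false for small $k$), or directly verify nontriviality of $w^{n-1}$ in $H^*(S(\gamma_n^{n-k})/Z_2,Z_2)$ by the Leray-Hirsch / Gysin computation, using that the total Stiefel-Whitney class of $\gamma_n^{n-k}$ has its degree-$(n-k)\cdots$ structure so that the relevant product of $w$ with pullbacks from $G_n^{n-k}$ realizes a nonzero top-type class in the appropriate degree $n-1$. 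Both the section argument and the characteristic-class argument are standard, so the essential content is just organizing this sandwich correctly.
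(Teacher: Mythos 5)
Your upper bound is exactly the paper's: the tautological equivariant map $(L,v)\mapsto v$ to $S^{n-1}$ plus monotonicity (Lemma~\ref{index-bu}). The gap is in the lower bound, and it is genuine. Your primary construction --- an equivariant map $S^{n-1}\to S(\gamma_n^{n-k})$, $v\mapsto(v,M(v))$ with $v\in M(v)$ --- requires choosing, continuously and with $M(-v)=M(v)$, an $(n-k-1)$-dimensional subspace of $v^\perp$; equivalently a rank-$(n-k-1)$ subbundle of the complementary bundle $\eta$ over $\mathbb{R}P^{n-1}$. This does not exist in general: already for $k=n-2$ one needs a line subbundle of $\eta$, and such a subbundle is either trivial (obstructed by $w_{n-1}(\eta)=w^{n-1}\neq 0$) or isomorphic to the tautological line bundle (equivalent to a nowhere-zero vector field on $\mathbb{R}P^{n-1}$, impossible for $n-1$ even). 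Your fallback through the Stiefel manifold fares no better: a map $S^{n-1}\to V_{n,n-k}$ with prescribed first vector is a section of the fibration $V_{n,n-k}\to S^{n-1}$, i.e.\ $n-k-1$ independent tangent fields on $S^{n-1}$, which again fails in general (hairy ball/Adams). Finally, the remaining option, ``verify $w^{n-1}\neq 0$ in $H^*(S(\gamma_n^{n-k})/Z_2,Z_2)$ by Leray--Hirsch/Gysin,'' is precisely the nontrivial content of the theorem and is only asserted, not proved: over the full Grassmannian the Gysin relation expresses $w^{n-1}$ as a specific polynomial in $w$ and the classes $w_i(\gamma_n^{n-k})$, and one must actually show some coefficient survives in $H^*(G_n^{n-k},Z_2)$.

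For comparison, the paper closes this gap by a concrete cohomological argument: the quotient $S(\gamma_n^{n-k})/Z_2$ is identified with the partial flag manifold $G_n^{1,k}$ of pairs $(l,L)$ with $l\perp L$, the forgetful map $\pi:(l,L)\mapsto l$ lands in $\mathbb{R}P^{n-1}$ and pulls back the generator to $w$, and $\pi^*$ is injective because the full flag bundle projection $F(\mathbb{R}^n)\to\mathbb{R}P^{n-1}$ factors through $G_n^{1,k}$ and is injective on mod $2$ cohomology; hence $w^{n-1}=\pi^*(w^{n-1})\neq 0$. A repair closer in spirit to your attempt: restrict the sphere bundle to the $\mathbb{R}P^{k}\subset G_n^{n-k}$ of $(n-k)$-planes containing a fixed $(n-k-1)$-plane; there the bundle is $\underline{\mathbb{R}}^{\,n-k-1}\oplus\gamma^1$, the Gysin relation reads $w^{n-k}=t\,w^{n-k-1}$ with $t$ the generator of $H^*(\mathbb{R}P^k,Z_2)$, so $w^{n-1}=t^{k}w^{n-k-1}\neq 0$ on this subspace, and monotonicity of the index gives the lower bound --- but some such computation must actually be carried out; it cannot be replaced by the nonexistent equivariant section.
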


\begin{proof}
The space $S(\gamma_n^{n-k})$ can be viewed as the space of pairs $(n, L)$, where $n$ is a unit vector, and $L$ is a $k$-dimensional linear subspace of $\mathbb R^n$, orthogonal to $n$. Hence there is a natural equivariant map from $S(\gamma_n^{n-k})$ to the $(n-1)$-dimensional sphere $(n, L)\mapsto n$, and by Lemma~\ref{index-bu} $\hind S(\gamma_n^{n-k})\le n-1$.

Let us calculate the cohomology $H_G^*(S(\gamma_n^{n-k}), Z_2)$ as the cohomology of the quotient space $G_n^{1,k} = S(\gamma_n^{n-k})/Z_2$. This space is identified with the set of pairs $(l, L)$, where $l$ is one-dimensional subspace $\mathbb R^n$, $L$ is $k$-dimensional subspace, and $l\perp L$.

The map $\pi : (l, L)\mapsto l$ sends $G_n^{1,k}$ to $\mathbb RP^{n-1}$. it is easy to check that the one-dimensional generator $w$ of $H^*(\mathbb RP^{n-1}, Z_2)$ is mapped under $\pi^*$ to the element $w\in H^*(G_n^{1,k}, Z_2)$, let us prove that $w^{n-1}\neq 0$. Thus the space $G_n^{1,k}$ can be viewed as the space of $k$-dimensional subspaces in the fibers of the complementary canonical bundle $\eta\to \mathbb RP^{n-1}$. 

Note that the flag bundle $F(\eta)$ of the vector bundle $\eta$ is identified with the flag manifold $F(\mathbb R^n)$, and the natural projection $\pi_F : F(\eta)\to \mathbb RP^{n-1}$ 
maps a flag
$$
0\subset L_1\subset L_2\subset\dots\subset L_n = \mathbb R^n
$$
to the line $L_1$, considered as an element of $\mathbb RP^{n-1}$.
 
Let the map $\rho : F(\eta)\to G_n^{1,k}$ map a flag $L_1\subset L_2\subset\dots\subset L_n$ to the pair, consisting of $L_1$ and the orthogonal complement to $L_1$ in $L_{k+1}$. Evidently $\pi_F = \pi\circ\rho$. The map $\pi_F^*$ is known to give the injective map on the cohomology $\mod 2$. Hence the map $\pi^*$ is injective, and $w^{n-1}\neq 0$ in the cohomology of $G_n^{1,k}$.
\end{proof}

Under the notation of the previous section we formulate the lemma.

\begin{lem}
\label{bbmap} There exists a map from $B(S(\gamma_n^{n-k}))$ to the space of balls  $B(\gamma_n^{n-k})$, identical on $S(\gamma_n^{n-k})$.
\end{lem}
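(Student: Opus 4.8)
The plan is to write the map down explicitly, as a ``fibrewise diameter'' map. Recall that, by definition, $B(S(\gamma_n^{n-k}))$ is the quotient $\bigl(S(\gamma_n^{n-k})\times I\bigr)/Z_2$ in which $\sigma$ acts by $(v,t)\mapsto(\sigma v,1-t)=(-v,1-t)$, and that we view $S(\gamma_n^{n-k})$ as the unit sphere bundle of $\gamma_n^{n-k}$ for the chosen continuous fibrewise norm; thus a representative $(v,t)$ is a vector $v$ of norm $1$ in some fibre $L$ together with a number $t\in I$. I would define
$$
\widetilde\Phi:S(\gamma_n^{n-k})\times I\to B(\gamma_n^{n-k}),\qquad \widetilde\Phi(v,t)=(1-2t)\,v,
$$
the value being taken in the same fibre $L$; since $\|(1-2t)v\|=|1-2t|\le 1$, it lies in the space of balls $B(\gamma_n^{n-k})$.

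The one thing to check is $Z_2$-invariance of $\widetilde\Phi$ on the source, which is the one-line computation
$$
\widetilde\Phi(-v,1-t)=\bigl(1-2(1-t)\bigr)(-v)=(2t-1)(-v)=(1-2t)v=\widetilde\Phi(v,t).
$$
Hence $\widetilde\Phi$ factors through a map $\Phi:B(S(\gamma_n^{n-k}))\to B(\gamma_n^{n-k})$, which is continuous because $\widetilde\Phi$ is continuous and the projection $S(\gamma_n^{n-k})\times I\to B(S(\gamma_n^{n-k}))$ is a quotient map. Geometrically $\Phi$ sends the segment glued to an antipodal pair $\{v,-v\}$ onto the chord $\{sv:s\in[-1,1]\}$ joining $v$ to $-v$ through the centre of the fibre ball; in terms of $k$-flats, the two parallel $k$-flats lying at unit distance on opposite sides of the origin get joined through the pencil of all parallel $k$-flats at distance $\le 1$.

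Finally, under the identification $i_X$ of $S(\gamma_n^{n-k})$ with the subset $\{(v,0)\}\subset B(S(\gamma_n^{n-k}))$, and with the boundary sphere bundle of $B(\gamma_n^{n-k})$, one has $\Phi(v,0)=(1-0)\,v=v$, so $\Phi$ is the identity on $S(\gamma_n^{n-k})$, which is exactly what the lemma asks. (As a bonus, $\widetilde\Phi(-v,t)=-\widetilde\Phi(v,t)$ shows that $\Phi$ is even equivariant for the antipodal action on $B(\gamma_n^{n-k})$, though this is not needed.) I do not expect any genuine obstacle here: the whole content is choosing the right formula and noticing the $\sigma$-invariance that lets it descend to the quotient; smoothness of the norm ball plays no role, only homogeneity and central symmetry of the norm.
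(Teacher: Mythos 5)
Your proof is correct and is essentially the paper's own argument: the paper maps $(s,t)$ to $(1-t)s - ts$, which is exactly your $(1-2t)s$, and likewise verifies that $(-s,1-t)$ has the same image so the map descends to the quotient and restricts to the identity at $t=0$. Nothing further is needed.
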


\begin{proof}
Let a pair $(s, t)$ represent some element $B(S(\gamma_n^{n-k}))$, let us map it to the combination $(1-t)s - ts\in B(\gamma_n^{n-k})$. The pair $(-s, 1-t)$ is mapped to the same point, thus the map $B(S(\gamma_n^{n-k}))\to B(\gamma_n^{n-k})$ is defined.
\end{proof}

Now we can deduce a theorem.

\begin{thm}
\label{bgcover} Let $S(\gamma_n^{n-k})$ be covered by a family of closed sets $\mathcal F = \{U_1,U_2,\ldots,U_{n+1}\}$. Suppose that none of $U_i$ contains a pair of antipodal points. Then for any partition of the family $\{U_i\}$ into non-empty subfamilies $\mathcal F_1$ and $\mathcal F_2$ there exists a point $c\in S(\gamma_n^{n-k})$ such that $c\in\bigcap\mathcal F_1$ and $\sigma(c)\in\bigcap\mathcal F_2$. 

Moreover, if the covering $\mathcal F$ is induced by some closed covering $\mathcal G = \{V_1,V_2,\ldots,V_{n+1}\}$ of $B(\gamma_n^{n-k})$, then $\mathcal G$ has a common point.
\end{thm}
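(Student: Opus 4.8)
The plan is to deduce both assertions directly from Theorem~\ref{antipcover}, applied to the $Z_2$-space $X = S(\gamma_n^{n-k})$ with the fiberwise antipodal action. First I would record that $X$ is a compact metric $Z_2$-space (it is the unit sphere bundle of a vector bundle over the compact Grassmannian $G_n^{n-k}$, equipped with a continuous fiber norm), and that by Theorem~\ref{indsg} its index is $\hind X = n-1$. Consequently a covering of $X$ by $n+1 = (n-1)+2$ closed sets, none of which contains a pair of antipodal points, satisfies exactly the hypotheses of Theorem~\ref{antipcover} with the role of ``$n$'' played by $n-1$. The first claim of that theorem then furnishes, for every partition $\{U_i\} = \mathcal F_1\cup\mathcal F_2$ into non-empty subfamilies, a point $c\in X$ with $c\in\bigcap\mathcal F_1$ and $\sigma(c)\in\bigcap\mathcal F_2$; since $\sigma$ is the fiberwise antipodal map, this is precisely the first assertion of Theorem~\ref{bgcover}.

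For the second assertion the only genuine point is that the ``moreover'' part of Theorem~\ref{antipcover} requires a closed covering of $B(X) = B(S(\gamma_n^{n-k}))$, whereas we are given a closed covering $\mathcal G = \{V_i\}$ of the space of balls $B(\gamma_n^{n-k})$, which is a different space. The bridge is Lemma~\ref{bbmap}: there is a map $p : B(S(\gamma_n^{n-k}))\to B(\gamma_n^{n-k})$ restricting to the identity on $S(\gamma_n^{n-k})$. I would pull $\mathcal G$ back along $p$, setting $V_i' = p^{-1}(V_i)$. These sets are closed; they cover $B(X)$ because $\mathcal G$ covers all of $B(\gamma_n^{n-k})$; and because $p|_{S(\gamma_n^{n-k})} = \mathrm{id}$, we have $V_i'\cap S(\gamma_n^{n-k}) = V_i\cap S(\gamma_n^{n-k}) = U_i$, so $\{V_i'\}$ is a closed covering of $B(X)$ inducing $\mathcal F$. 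Applying the ``moreover'' part of Theorem~\ref{antipcover} to $\{V_i'\}$ yields a point $z\in\bigcap_i V_i'$, whence $p(z)\in\bigcap_i V_i$, i.e. $\mathcal G$ has a common point.

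I do not expect a real obstacle: the entire content is already packaged in Theorems~\ref{indsg} and~\ref{antipcover} and Lemma~\ref{bbmap}, and the proof is just their assembly. The one step worth spelling out carefully is the transport of a covering of $B(\gamma_n^{n-k})$ to a covering of $B(S(\gamma_n^{n-k}))$ via $p$, and the verification that this transport preserves closedness, the covering property, and — most importantly — the induced covering on the sphere bundle $S(\gamma_n^{n-k})$, which is what allows the hypothesis ``$\mathcal F$ is induced by $\mathcal G$'' to be fed into Theorem~\ref{antipcover}.
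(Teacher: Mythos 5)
Your proposal is correct and follows essentially the same route as the paper: the first claim is Theorems~\ref{antipcover} and~\ref{indsg} applied to $S(\gamma_n^{n-k})$ with $n+1=(n-1)+2$ sets, and the second claim is handled by transporting the covering through the map of Lemma~\ref{bbmap} so that the ``moreover'' part of Theorem~\ref{antipcover} applies. Your explicit pullback $V_i'=p^{-1}(V_i)$, with the checks of closedness, the covering property, and $V_i'\cap S(\gamma_n^{n-k})=U_i$, is just a careful spelling-out of the paper's one-line argument.
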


\begin{proof}
The first claim follows from Theorems~\ref{antipcover} and \ref{indsg}, to prove the second claim we use Lemma~\ref{bbmap} to obtain a covering of $B(\gamma_n^{n-k})$ from the covering of $B(S(\gamma_n^{n-k}))$.
\end{proof}

Let us consider the oriented Grassmannian ${G_n^k}^+$. It has a natural action of $Z_2$ by the change of the orientation. Note that ${G_n^k}^+\sim{G_n^{n-k}}^+$, hence it is sufficient to consider the case $2k\le n$ to calculate the index of this action. The following theorem summarizes the data on the index of the oriented Grassmannian from the papers~\cite{hil1980A,hil1980B}. 

\begin{thm}
\label{orgindex}
Let $2k\le n$, and let $2^s$ be the minimal power of two, satisfying $2^s\ge n$.

1) If $k = 1$, then $\hind {G_n^1}^+ = \hind S^{n-1} = n-1$;

2) If $k = 2$, then $\hind {G_n^k}^+ = 2^s-2$;

3) If $k > 2$, then in the case $n=2k=2^s$ we have $2^{s-1}\le \hind {G_n^k}^+\le 2^s-1$;
and $2^s-2\le \hind {G_n^k}^+\le 2^s-1$ in other cases.

In all cases $\hind {G_n^k}^+ \ge n-k$, the equality holds for $k=1$, $k=2$ and $n=2^s$.
\end{thm}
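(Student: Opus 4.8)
The plan is to recognize the statement as a repackaging of Hiller's work, so that the only genuinely new step is a clean reduction to a cohomological height computation. First I would note that the orientation-reversing action on ${G_n^k}^+$ is free and its quotient is the ordinary Grassmannian $G_n^k$; the associated double cover ${G_n^k}^+\to G_n^k$ is exactly the orientation double cover of the tautological bundle $\gamma_n^k$, hence classified by $w_1(\gamma_n^k)\in H^1(G_n^k,Z_2)$. Under the identification $H_G^*({G_n^k}^+,Z_2)=H^*(G_n^k,Z_2)$ the distinguished class $w$ therefore becomes $w_1(\gamma_n^k)$, and so
$$
\hind {G_n^k}^+ = \max\{m : w_1(\gamma_n^k)^m\neq 0\ \text{in}\ H^*(G_n^k,Z_2)\},
$$
the height of the first Stiefel--Whitney class in the ring $H^*(G_n^k,Z_2)=Z_2[w_1,\dots,w_k]/(\bar w_{n-k+1},\dots,\bar w_n)$; since ${G_n^k}^+\cong{G_n^{n-k}}^+$ as $Z_2$-spaces, we may assume $2k\le n$.

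For part 1, $G_n^1=\mathbb RP^{n-1}$ has $H^*=Z_2[w_1]/(w_1^n)$, so the height is $n-1$ (equivalently ${G_n^1}^+=S^{n-1}$ with the antipodal action, as in Theorem~\ref{indsg}). For parts 2 and 3 I would invoke the explicit ring computations of Stong and Hiller~\cite{hil1980A,hil1980B}: for $k=2$ the recursion $\bar w_j=w_1\bar w_{j-1}+w_2\bar w_{j-2}$ together with the relations $\bar w_{n-1}=\bar w_n=0$ forces $w_1^{2^s-2}\neq 0$ and $w_1^{2^s-1}=0$, where $2^s$ is the least power of two with $2^s\ge n$; for $k>2$ one analyzes the relation ideal together with the action of the Steenrod squares (using $w_1^{2^j}=\mathrm{Sq}^{2^{j-1}}(w_1^{2^{j-1}})$ and the Wu formula) to identify $w_1^{2^s-1}$ as the last possibly nonzero power, giving the upper bound $2^s-1$, while a direct non-vanishing argument gives the lower bound $2^s-2$, which drops to $2^{s-1}$ precisely in the exceptional case $n=2k=2^s$.

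For the universal lower bound $\hind {G_n^k}^+\ge n-k$ I would argue by restriction. Fix a $(k-1)$-dimensional subspace $W\subset\mathbb R^n$ and consider the sub-Grassmannian $\{P\in G_n^k : W\subset P\}$; sending $P$ to the line $P/W\subset\mathbb R^n/W$ identifies it with $\mathbb RP^{n-k}$, and over it $\gamma_n^k$ splits as the sum of a trivial $(k-1)$-plane bundle and the tautological line bundle of $\mathbb RP^{n-k}$. Hence $w_1(\gamma_n^k)$ restricts to the generator of $H^1(\mathbb RP^{n-k},Z_2)$, whose $(n-k)$-th power is nonzero, so $w_1(\gamma_n^k)^{n-k}\neq 0$ in $H^*(G_n^k,Z_2)$. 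Comparing with the values above gives $\hind=n-k$ in the listed cases: $k=1$ (where $n-1=n-k$), $k=2$ with $n=2^s$ (where $2^s-2=n-k$), and the square case $n=2k=2^s$ (where the lower estimate reads $n-k=k=2^{s-1}$).

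The genuinely hard part is not carried out here: the exact height and the upper bound for $k\ge 2$ rest on the detailed structure of the ideal $(\bar w_{n-k+1},\dots,\bar w_n)$ and the Steenrod action on it, which is the content of~\cite{hil1980A,hil1980B}; the present contribution is the reduction in the first paragraph and the elementary restriction argument for the lower bound. One should also confirm carefully that the only exceptional situation is $n=2k=2^s$ and pin down exactly when $\hind=n-k$, since the summary phrasing leaves a little room.
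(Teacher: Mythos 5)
The paper gives no proof of this theorem---it is stated explicitly as a summary of the computations in~\cite{hil1980A,hil1980B}---and your proposal takes essentially the same route: you correctly reduce $\hind {G_n^k}^+$ to the height of $w_1(\gamma_n^k)$ in $H^*(G_n^k,Z_2)$ (the deck transformation quotient is $G_n^k$ and the double cover is classified by $w_1$ of the determinant of $\gamma_n^k$) and then defer the height computations to Hiller, exactly as the paper does. Your restriction-to-$\mathbb RP^{n-k}$ argument for $\hind {G_n^k}^+\ge n-k$ is correct, though this bound also follows directly from the case-by-case values since $2^s\ge n$ and $2^{s-1}=n-k$ in the exceptional case $n=2k=2^s$.
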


\section{Proofs for the theorems on the canonical bundle}

\begin{proof}[Proof of Theorem~\ref{hyperpl2}]
Consider the space of balls $B(\gamma_n^1)$, we can choose the balls large enough so that $B(\gamma_n^1)$ contains all the sets $V_i$.

Let us define the subsets $U_i$ of the space $S(\gamma_n^1)$ as follows. Take some $s\in S(\gamma_n^1)$, lying in the fiber $L$. Choose the farthest from $s$ point on each of the segments $V_i\cap L$, denote it $f_i(s)$. These points depend on $s$ continuously. Now denote the nearest to $s$ point of these point by $f(s)$, it also depends continuously on $s$. 

Now put
$$
U_i = \{s\in S(\gamma_n^1) : f(s) = f_i(s)\}.
$$
These sets are closed. If some $U_i$ contains an antipodal pair $s, s'\in S(\gamma_n^1)$, then the segment $V_i\cap L$ satisfies the first alternative.

Otherwise we apply Theorem~\ref{bgcover}. For any partition of the index set $[n+1]=I_1\cup I_2$ there is a pair of antipodal points $s, s'\in S(\gamma_n^1)$ such that
$$
s\in \cap_{i\in I_1} U_i, \quad s'\in \cap_{i\in I_2} U_i.
$$
Consider the families of segments $\mathcal A = \{V_i\cap L\}_{i\in I_1}$ and $\mathcal B = \{V_i\cap L\}_{i\in I_2}$, without loss of generality assume that $s$ is to the left of $s'$. In this case all right ends of segments of $\mathcal A$ coincide in $a$, all left ends of segments of $\mathcal B$ coincide in $b$, and either $a$ is to the left of $b$, or all the segments of $\mathcal A\cup\mathcal B$ contain $[ba]$, i.e. the families of segments are equalized.
\end{proof}

\begin{proof}[Proof of Theorem~\ref{kpl1}]
Denote
$$
U_i = \{x\in\gamma_n^k : x\ \text{is in the fiber}\ L,\ \dist(x,
V_i\cap L) = \min_{j=1,\ldots,n+1}\dist(x, V_j\cap L)\}.
$$
The set $V_i\cap L$ depends continuously on $L$, hence the sets $U_i$ are closed. Denote the bundle of spheres of size $R$ in $\gamma_n^k$ by $S(\gamma_n^k)$. Let us show that for large enough $R$ the sets $U_i\cap S(\gamma_n^k)$ do not contain antipodal pairs. 

Assume the contrary. Let the set $U_i\cap S(\gamma_n^k)$ contain a pair of antipodal points $R_mx_m$ and $-R_mx_m$ for some sequence of radii $R_m\to+\infty$. It means that the closest to $R_mx_m$ and $-R_mx_m$ points of $\bigcup V_i$ belong to the same set $V_i$, denote these points by $y_m$ and $z_m$. From the compactness considerations we assume that the points  $x_m, y_m, z_m$ tend to some points $x, y, z$ in $L$. 

It is easy to see that $\conv \bigcup_{i=1}^{n+1}\{V_i\cap L\}$ contains two points $y,z$, that belong to the same $V_i$. Besides, the spheres with centers $R_mx_m, -R_mx_m$ and radii $|R_mx_m-y_m|, |-R_mx_m-z_m|$, tend to some disjoint support half-spaces for $\bigcup_{i=1}^{n+1}\{V_i\cap L\}$, containing $y$ and $z$ respectively. This is a contradiction with the non-antipodality of $\{V_i\cap L\}$. 

Hence, for large enough $R$ the sets $U_i\cap S(\gamma_n^k)$ do not contain antipodal pairs. 

Applying Theorem~\ref{bgcover} we find a common point for the family $\{U_i\}$, that is exactly what we need.
\end{proof}

\begin{proof}[Proof of Theorem~\ref{kpl2}]
Let us apply Theorem~\ref{kpl1} and find $x\in L$, equidistant from $V_i\cap L$. 

Suppose that the distance is positive. Let the set of closest to $x$ points of $\bigcup_{i=1}^{n+1} V_i\cap L$ be $K$. Obviously, $K$ intersects all of $V_i$, since $K$ and $\bigcup_{i=1}^{n+1} V_i\cap L$ are convex, then $K$ is contained in some support half-space $H$ for $\bigcup_{i=1}^{n+1} V_i\cap L$. But the opposite to $H$ support half-space cannot intersect $V_i$ from the non-antipodality condition on $\{V_i\cap L\}$, so it cannot be a support half-space for $\bigcup_{i=1}^{n+1} V_i\cap L$. That is a contradiction.
\end{proof}

\begin{proof}[Proof of Theorem~\ref{polsection}]
Consider the space $S(\gamma_n^k)$ and define its closed subspaces
$$
U_i = \{(n, L) : L\in G_n^k,\ n\in S(L),\ (n, s_i(L)) = \max_{j\in[m]} (n, s_j(L))\}.
$$
Since the interiors of $P(L)$ are non-empty, the sets $U_i$ does not contain antipodal pairs. Now Theorems~\ref{indsg} and \ref{antipcover2} imply this theorem directly.
\end{proof}

\section{Partitioning measures by hyperplanes}
\label{mespart}

Let us formulate the generalization of the theorem from~\cite{bi1990,cgppsw1994,klh1997}), that claims that any $n+1$ convex compact sets in $\mathbb R^n$ can either be intersected by a hyperplane; or any two non-empty disjoint subfamilies of this family can be separated by a hyperplane. We need some definition about measures, see the book~\cite{shg1977} for the general treatment of measures.

\begin{defn}
A measure $\mu$ on $\mathbb R^n$ is called \emph{probabilistic} if $\mu(\mathbb R^n) = 1$.
\end{defn}

We are going to consider such measures that the measure of a half-space depends continuously on the half-space. We call such measures continuous. For a measure to be continuous in this sense it is sufficient that the measure is absolutely continuous in the common sense.

\begin{defn}
A pair of a continuous probabilistic measure with compact support $\mu$ and a number $\varepsilon\in [0, 1/2)$ is called a \emph{measure with deviation}. If we consider several measures $\mu_i$ with deviation, the deviation of each measure is denoted $\varepsilon(\mu_i)$.
\end{defn}

\begin{defn}
Let $\mu$ be a measure with deviation in $\mathbb R^n$. A hyperplane $h$ \emph{(reliably) intersects} the measure $\mu$, if $h$ partitions $\mathbb R^n$ into half-spaces $H_1$ and $H_2$, and
$$
\mu(H_1),\mu(H_2)\ge \varepsilon(\mu).
$$
\end{defn}

\begin{defn}
Let $\mu$ be a measure with deviation in $\mathbb R^n$. A half-space $H$ \emph{(almost) contains} the measure $\mu$, if
$$
\mu(H) > 1 - \varepsilon(\mu).
$$
\end{defn}

\begin{defn}
Let $\mathcal M_1$ and $\mathcal M_2$ be two families of measures with deviations in $\mathbb R^n$. A hyperplane  $h$ \emph{(almost) separates} the families $\mathcal M_1$ and $\mathcal M_2$, if $h$ partitions $\mathbb R^n$ into half-spaces $H_1$ and $H_2$, and any $\mu\in\mathcal M_1$ is almost contained in $H_1$, and any $\mu\in\mathcal M_2$ is almost contained in $H_2$.
\end{defn}

Corollary~\ref{hyperpl1} implies the following claim.

\begin{cor}
\label{measuresep} 
Let $\mathcal M$ be a family of $n+1$ measures with deviation in $\mathbb R^n$. Then either there exists a hyperplane that reliably intersects all the measures of $\mathcal M$; or for any partition of $\mathcal M$ into non-empty $\mathcal M_1$ and $\mathcal M_2$ there exists a hyperplane, that almost separates $\mathcal M_1$ and $\mathcal M_2$.
\end{cor}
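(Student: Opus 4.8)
The plan is to encode each measure as a closed subset of the space of hyperplanes and apply Corollary~\ref{hyperpl1}. Recall that the space of hyperplanes in $\mathbb R^n$ is identified with $\gamma_n^1$, whose base is the Grassmannian $G_n^1=\mathbb RP^{n-1}$ of directions; the fiber over a direction $\ell$ consists of the hyperplanes orthogonal to $\ell$, and, fixing a unit vector $e$ spanning $\ell$, such a hyperplane is $h_{e,t}=\{x:\langle x,e\rangle=t\}$ with closed half-spaces $H_1(e,t)=\{x:\langle x,e\rangle\le t\}$ and $H_2(e,t)=\{x:\langle x,e\rangle\ge t\}$, so the fiber is parameterized by $t\in\mathbb R$. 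For each measure $\mu_i\in\mathcal M$ put
$$
V_i=\{h_{e,t}:\ \mu_i(H_1(e,t))\ge\varepsilon(\mu_i)\ \text{and}\ \mu_i(H_2(e,t))\ge\varepsilon(\mu_i)\},
$$
the set of hyperplanes reliably intersecting $\mu_i$; this condition is symmetric in the two sides, so it does not depend on the labelling.

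Assuming first that $\varepsilon(\mu_i)>0$ for every $i$, I would verify the hypotheses of Corollary~\ref{hyperpl1}. The function $F_i(e,t)=\mu_i(H_1(e,t))$ is continuous (by the continuity assumption on $\mu_i$), non-decreasing in $t$ for fixed $e$, and, since $\suppo\mu_i$ is compact, equal to $0$ for $t$ small and to $1$ for $t$ large; hence $V_i$ is closed, and its slice over $\ell$ is the interval $\{t:\varepsilon(\mu_i)\le F_i(e,t)\le 1-\varepsilon(\mu_i)\}=[a_i,b_i]$, which is a bounded segment and is non-empty because $F_i(e,\cdot)$ attains the value $1/2\in[\varepsilon(\mu_i),1-\varepsilon(\mu_i)]$ (here $\varepsilon(\mu_i)<1/2$ is used); Hausdorff-continuity of this slice as $\ell$ varies again follows from continuity of $\mu_i$ on half-spaces. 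Now apply Corollary~\ref{hyperpl1}. If the $V_i$ have a common point, a single hyperplane reliably intersects all measures of $\mathcal M$, which is the first conclusion. Otherwise, given a partition $\mathcal M=\mathcal M_1\cup\mathcal M_2$ into non-empty families, the corresponding partition of $\{V_i\}$ produces a direction $e$ and a non-constant linear function on the fiber separating the slices; that is, a parameter $t_0$ with $b_i<t_0$ for every $\mu_i\in\mathcal M_1$ and $a_j>t_0$ for every $\mu_j\in\mathcal M_2$ (or the two roles interchanged, which is absorbed by swapping $H_1$ and $H_2$). Since $\{t:F_i(e,t)\le 1-\varepsilon(\mu_i)\}=(-\infty,b_i]$, the inequality $b_i<t_0$ gives $\mu_i(H_1(e,t_0))>1-\varepsilon(\mu_i)$, i.e. $H_1(e,t_0)$ almost contains $\mu_i$; symmetrically $a_j>t_0$ gives $\mu_j(H_1(e,t_0))<\varepsilon(\mu_j)$, i.e. $H_2(e,t_0)$ almost contains $\mu_j$. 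Hence $h_{e,t_0}$ almost separates $\mathcal M_1$ and $\mathcal M_2$, the second conclusion.

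The point requiring separate treatment — and the only real obstacle — is the degenerate case $\varepsilon(\mu_i)=0$ for some $i$: then $V_i$ is all of $\gamma_n^1$, its slices are whole lines rather than segments, so Corollary~\ref{hyperpl1} does not apply directly, and in fact a measure of deviation $0$ can never be almost contained in a half-space, so the second conclusion would be vacuous for such a measure. I would dispose of this case by the ham-sandwich theorem: the remaining measures (at most $n$ of them in $\mathbb R^n$) admit a common bisecting hyperplane $h$, and a bisector has $\mu(H_1)=\mu(H_2)=1/2\ge\varepsilon(\mu_j)$ for every $j$ (all deviations being $<1/2$), while $h$ reliably intersects $\mu_i$ trivially; thus $h$ reliably intersects every measure of $\mathcal M$ and the first alternative already holds. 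With this dichotomy settled, the remaining work — checking the "non-empty continuous segment" hypothesis and unwinding the definition of "almost separates" — is routine.
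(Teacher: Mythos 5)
Your proof takes the same route as the paper's: the paper simply defines $V_i\subseteq\gamma_n^1$ to be the set of hyperplanes reliably intersecting $\mu_i$ and invokes Corollary~\ref{hyperpl1}, exactly as you do, so your argument is correct and essentially identical in approach. The additional material you supply --- verifying closedness, the non-empty bounded segment fibers and their Hausdorff continuity, unwinding ``separated'' into ``almost separates'', and the ham-sandwich treatment of the degenerate case $\varepsilon(\mu_i)=0$, which the paper's one-line proof passes over --- is extra detail rather than a different method.
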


\begin{proof}
Put $\mathcal M = \{\mu_1,\mu_2,\ldots,\mu_{n+1}\}$, and denote $V_i$ the set of hyperplanes that reliably intersect $\mu_i$. Now applying Corollary~\ref{hyperpl1}, we obtain the required alternative.
\end{proof}

We are going to generalize some results of~\cite{bhj2007,breu2009}.

\begin{defn}
Consider a family of $n$ measures with deviation $\{\mu_i\}_{i=1}^n$ in $\mathbb R^n$. Denote by $X\subseteq\gamma_n^1$ the set of hyperplanes, that reliably intersect all these measures, consider the natural projection $p : X\to G_n^1$. The family of measures $\{\mu_i\}_{i=1}^n$ is \emph{flat}, if the projection $p$ can be lifted to the universal covering $\pi :S^{n-1}\to G_n^1$, i.e. $p =\pi\circ\tilde p$ for some continuous $\tilde p$.
\end{defn}

\begin{thm}
\label{measurepart} 
Consider a flat family of measures with deviation $\{\mu_i\}_{i=1}^n$ in $\mathbb R^n$ and the numbers $\{\alpha_i\}$ such that for any $i\in[n]$ either $\alpha_i=\varepsilon(\mu_i)$, or $\alpha_i=1-\varepsilon(\mu_i)$. Then there exists a half-space $H\subset\mathbb R^n$ such that for all $i=1,\ldots, n$
$$
\mu_i(H) = \alpha_i.
$$
\end{thm}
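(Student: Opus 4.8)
The first reduction is to the case where every $\varepsilon(\mu_i)>0$; the degenerate case is recovered afterwards by giving the measures with deviation $0$ a small positive deviation, solving the perturbed problem, and passing to the limit, using that each $\mu_i$ is continuous on half-spaces and has compact support. So assume all $\varepsilon(\mu_i)>0$. Then for each unit normal $v$ and each $i$ the set of levels $t$ with $\varepsilon(\mu_i)\le\mu_i(\{x:\langle x,v\rangle\ge t\})\le 1-\varepsilon(\mu_i)$ is a nonempty segment $J_i(v)\subset\mathbb R$ depending continuously on $v$, and the equation $\mu_i(H)=\alpha_i$ picks out the right endpoint of $J_i(v)$ when $\alpha_i=\varepsilon(\mu_i)$ and the left endpoint when $\alpha_i=1-\varepsilon(\mu_i)$; call it $E_i(v)$. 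It therefore suffices to find $v\in S^{n-1}$ with $E_1(v)=\dots=E_n(v)$, the common value being the offset of the desired half-space.

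The flatness hypothesis is used to make the $E_i$ globally defined. The sets $\{h\in\gamma_n^1:h\text{ reliably intersects }\mu_i\}$ meet every fibre of $\gamma_n^1\to G_n^1$ in a segment, their intersection is $X$, and by assumption $p:X\to G_n^1$ lifts through the double cover $\pi:S^{n-1}\to G_n^1$. This lift is precisely a coherent co-orientation of the reliable hyperplanes: over $S(\gamma_n^1)\cong S^{n-1}$ the tautological line bundle becomes trivial and the functions $E_i$ are defined and continuous near the relevant part of $S^{n-1}$, with the antipodal rule $E_i(-v)=-E_i^{\flat}(v)$, where $E_i^{\flat}$ denotes the opposite endpoint of $J_i$.

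Now I would imitate the proof of Theorem~\ref{hyperpl2}. Set $U_i=\{v\in S^{n-1}:E_i(v)=\max_j E_j(v)\}$; these are closed and cover $S^{n-1}$. Since $\hind S(\gamma_n^1)=n-1$ by Theorem~\ref{indsg}, a cover of $S^{n-1}$ by $n$ closed sets cannot consist solely of antipodal-pair-free sets (contrapositive of Theorem~\ref{antipcover2}, applied with $\hind=n-1$ and $N=n$: otherwise some point and its antipode would lie in at least $n+1$ of the $n$ sets). Hence some $U_i$ contains an antipodal pair $\{v,-v\}$; then $E_i(v)=\max_j E_j(v)$ together with $E_i^{\flat}(v)=\min_j E_j^{\flat}(v)$ forces the segments $J_j(v)$ of all measures on the same side of the partition as $\mu_i$ to be nested inside $J_i(v)$, and this degenerate configuration is disposed of by a separate (lower-complexity) argument that again uses flatness; in the good case it directly yields a point with $E_1=\dots=E_n$. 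Equivalently, one may try to arrange an invariant refinement of the cover to which Theorem~\ref{ls} applies, giving $\bigcap_i U_i\neq\emptyset$ in one stroke.

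The main obstacle I expect is exactly this last step: converting the "nested segments at an antipodal pair" alternative into the prescribed equalities $E_i=E_j$ rather than mere containment of reliable segments, while controlling the interplay between the two classes $\{i:\alpha_i=\varepsilon(\mu_i)\}$ and $\{i:\alpha_i=1-\varepsilon(\mu_i)\}$ (right versus left endpoints). Everything else — continuity of $J_i(v)$ and of the construction, closedness of the $U_i$, and the two limiting passages — is a routine consequence of the continuity of the $\mu_i$ on half-spaces and the compactness of their supports.
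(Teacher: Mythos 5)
Your setup---the fiberwise segments $J_i(v)$ of reliable levels and the endpoint functions $E_i$ keyed to whether $\alpha_i=\varepsilon(\mu_i)$ or $1-\varepsilon(\mu_i)$---is essentially the paper's (the $J_i(v)$ are the fibers $V_i\cap L$, and the endpoint functions are the maps $f_i$ from the proof of Theorem~\ref{hyperpl2}). But the central step is missing, and your account of where flatness enters is off. The endpoints $E_i$ are globally defined and continuous on all of $S^{n-1}$ with no hypothesis at all: each $\mu_i$ separately cuts every fiber in a segment. Flatness is a condition on the set $X$ of hyperplanes that reliably intersect \emph{all} the measures, i.e.\ on the region where the segments $J_1(v),\dots,J_n(v)$ have a common point, and its role is to break the antipodal symmetry there---not to ``make the $E_i$ globally defined.'' Decisively, your covering of $S^{n-1}$ by the $n$ closed sets $U_i=\{v: E_i(v)=\max_j E_j(v)\}$ \emph{always} yields an antipodal pair in some $U_i$ (that is just Borsuk--Ulam), and at such a pair you only obtain $E_i(v)\ge E_j(v)$ and $E_i^{\flat}(v)\le E_j^{\flat}(v)$ for all $j$; when the $\alpha_j$ are all of the same type (say all $\alpha_j=\varepsilon(\mu_j)$, a case the theorem must cover) this says only that every $J_j(v)$ is nested in $J_i(v)$ at that one direction, which gives no half-space with the prescribed values. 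You explicitly defer exactly this case to ``a separate (lower-complexity) argument that again uses flatness,'' but that deferred step \emph{is} the theorem: with only $n$ sets there is no mechanism for moving to a direction where the common part of the segments degenerates to a point that is simultaneously the correct end of each $J_i$. Your fallback via Theorem~\ref{ls} also does not apply: the $U_i$ are neither invariant nor inessential in general.

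The paper closes precisely this gap by using flatness to manufacture an $(n{+}1)$-st set. Over the region $Y\subset G_n^1$ of directions admitting a hyperplane reliably intersecting all the measures, flatness trivializes the double cover $Z=Z_1\cup Z_2\subset S(\gamma_n^1)$; one puts $U_0=Z_1$, defines $U_i=\{s\notin\int U_0 : f(s)=f_i(s)\}$ for $i\ge 1$, and checks that none of $U_0,\dots,U_n$ contains an antipodal pair (a nested configuration has positive length, since $\varepsilon(\mu_i)<1/2$, and would force a point into $\int U_0$, which has been excised). Then Theorem~\ref{bgcover} applies to these $n+1$ sets on $S(\gamma_n^1)$, whose index is $n-1$, with the partition $I_1=\{i:\alpha_i=1-\varepsilon(\mu_i)\}$ and $I_2\ni 0$; the resulting antipodal pair has one point on $\bd U_0$, and this boundary condition collapses the intersection of the segments to a single point which is the right end of $V_i\cap L$ for $i\in I_1$ and the left end for $i\in I_2\setminus\{0\}$, i.e.\ the desired hyperplane. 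Without some device of this kind your ``nested segments at an antipodal pair'' alternative cannot be disposed of, so the proposal has a genuine gap at its key step; the preliminary reduction to $\varepsilon(\mu_i)>0$ and the continuity/compactness remarks are fine but peripheral.
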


The flatness condition on $p$ here generalizes the condition on separated supports from Theorem~1 in~\cite{bhj2007}. If the supports are separated, then the family of measures is flat, independent of deviations. In~\cite{breu2009} a similar result is proved, the separated supports condition is replaced by the following condition: there exist $n$ separated compacts $C_1, \ldots, C_n$, such that any hyperplane, that reliably intersects $\mu_i$, intersects $C_i$. This condition implies the flatness condition of Theorem~\ref{measurepart} too. 

In Theorem~\ref{measurepart} the deviations are not equal to $1/2$, but going to the limit, we can prove it when some of the deviations are $1/2$. If all the deviations are $1/2$, we obtain the ``ham sandwich'' theorem.

\begin{proof}
Denote by $V_i$ the set of hyperplanes that reliably intersect $\mu_i$. The following is similar to the proof of Theorem~\ref{hyperpl2}.

Consider the ball bundle $B(\gamma_n^1)$, take the balls large enough so that it contains all the sets $V_i$ and define the maps $f_i : S(\gamma_n^1)\to B(\gamma_n^1)$ ($i=1,\ldots, n$) and $f : S(\gamma_n^1)\to B(\gamma_n^1)$ as in the proof of Theorem~\ref{hyperpl2}. Now define close subsets $U_0,U_1,\ldots,U_n\subseteq S(\gamma_n^1)$.

Take the projection of $X$ to $G_n^1$, denote its image by $Y$. Put $Z = p^{-1}(Y)\cap S(\gamma_n^1)$. The set $Z$ is a two-fold cover of $Y$ and by the flatness condition the covering $Z\to Y$ is trivial, i.e. $Z=Z_1\cup Z_2$, where $p: Z_1\to Y$ and $p: Z_2\to Y$ are bijections.

Now put $U_0 = Z_1$ and
$$
U_i = \{s\in S(\gamma_n^1)\setminus\int U_0 : f(s) = f_i(s) \}.
$$

The set $U_0$ does not contain antipodal pairs by definition. Suppose that some $U_i$ contains an antipodal pair $s, s'\in S(\gamma_n^1)$ in the fiber $L$. In this case the segment $V_i\cap L$ is contained in all other segments $V_j\cap L$. The length of $V_i\cap L$ is positive, since $\varepsilon(\mu_i) < 1/2$. Then $p(s) = p(s')\in \int Y$, i.e. one of the points $s, s'$ is in $\int U_0$, that is a contradiction with the definition of $U_i$.

Now put $I=\{0, 1,\ldots, n\}$,
$$
I_1 = \{i=1,\ldots, n : \alpha_i = 1 - \varepsilon(\mu_i)\}
$$
and $I_2 = I\setminus I_1$. Applying Theorem~\ref{bgcover}, we find an antipodal pair $s, s'\in S(\gamma_n^1)$ such that
$$
\forall i\in I_1\ f(s) = f_i(s),\quad\forall i\in
I_2\setminus\{0\}\ f(s') = f_i(s'),\quad s'\in\bd U_0.
$$
It follows that the segments $V_i\cap L$ intersect in the unique point, which is the right end for $V_i\cap L$ ($i\in I_1$), and the left end for $V_i\cap L$ ($i\in I_2\setminus\{0\}$). It is easy to see that this point designates the required hyperplane.
\end{proof}

\section{Borsuk-Ulam type theorems for flats}
\label{buplanes}

Let us make a definition and state a corollary of Theorems~\ref{kpl1} and \ref{kpl2}.

\begin{defn}
A set $X\subseteq\mathbb R^n$ is called \emph{$l$-convex}, if its projection to any $l$-dimensional subspace of $\mathbb R^n$ is convex.
\end{defn}

\begin{cor}
\label{kpltrans} Suppose $\mathcal F$ is a non-antipodal family of $n+1$ compact sets in $\mathbb R^n$, then there exists a $k$-flat equidistant from all the sets of $\mathcal F$. If, in addition, the union $\bigcup\mathcal F$ is $(n-k)$-convex, then $\mathcal F$ has a common $k$-transversal.
\end{cor}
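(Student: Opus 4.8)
The plan is to reduce the statement to Theorems~\ref{kpl1} and~\ref{kpl2} by passing to orthogonal projections. Recall that the space of $k$-flats in $\mathbb R^n$ is identified with $\gamma_n^{n-k}$: a pair $(M, y)$ with $M\in G_n^{n-k}$ and $y\in M$ corresponds to the $k$-flat $\ell(M,y) = \{w\in\mathbb R^n : \pi_M(w) = y\}$, where $\pi_M$ denotes the orthogonal projection onto $M$. Writing $\mathcal F = \{K_1,\ldots,K_{n+1}\}$, I would define closed subsets $V_i\subseteq\gamma_n^{n-k}$ by requiring the fiber $V_i\cap M$ over $M$ to equal $\pi_M(K_i)$. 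Since each $K_i$ is nonempty and compact, $\pi_M(K_i)$ is a nonempty compact set depending continuously on $M$ in the Hausdorff metric, so the hypotheses of Theorems~\ref{kpl1} and~\ref{kpl2} on the $V_i$ are met except for non-antipodality (and, for the second part, convexity of the fiberwise union). The elementary identity $\dist(\ell(M,y), K_i) = \dist(y, \pi_M(K_i))$ shows that a point $(M,y)$ equidistant from all $V_i\cap M$ yields a $k$-flat equidistant from all sets of $\mathcal F$; likewise $(M,y)\in\bigcap_i V_i$ means $y\in\pi_M(K_i)$ for every $i$, i.e. $\ell(M,y)$ meets every $K_i$ and is a common $k$-transversal.

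The crux is to check that the family $\{\pi_M(K_i)\}_{i=1}^{n+1}$ is non-antipodal in $M$ for every $M\in G_n^{n-k}$. Suppose not: for some $M$ and some $i$ there are points $p,q\in\pi_M(K_i)$ antipodal with respect to $C_M := \conv\bigcup_j\pi_M(K_j)$, which equals $\pi_M(\conv\bigcup\mathcal F)$ since $\pi_M$ is linear. Pick a unit vector $u\in M$ whose support hyperplanes of $C_M$ with outer normals $u$ and $-u$ pass through $p$ and $q$ respectively, so $\langle u,\cdot\rangle$ attains its maximum over $C_M$ at $p$ and its minimum at $q$. Regarding $u$ as a vector in $\mathbb R^n$ and using $\langle u, z\rangle = \langle u, \pi_M(z)\rangle$, we get that over $C := \conv\bigcup\mathcal F$ the functional $\langle u,\cdot\rangle$ is maximized exactly on the face $\{z\in C : \langle u, z\rangle = \langle u, p\rangle\}$, which contains a point $\tilde p\in K_i$ lying over $p$; symmetrically the opposite face contains $\tilde q\in K_i$ over $q$. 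Then $\tilde p,\tilde q\in K_i$ are antipodal with respect to $C = \conv\bigcup\mathcal F$, contradicting the non-antipodality of $\mathcal F$. I expect this projection lemma to be the only substantive step; the rest is bookkeeping.

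With the fiberwise non-antipodality established, Theorem~\ref{kpl1} applied to the bundle $\gamma_n^{n-k}$ (i.e. with $n-k$ in place of $k$) produces a fiber $M$ and a point $y\in M$ equidistant from all $V_i\cap M$, hence the required equidistant $k$-flat. For the second assertion, assume $\bigcup\mathcal F$ is $(n-k)$-convex; then by definition $\pi_M\!\left(\bigcup\mathcal F\right) = \bigcup_i\pi_M(K_i) = \left(\bigcup_i V_i\right)\cap M$ is convex for every $M\in G_n^{n-k}$, so the extra hypothesis of Theorem~\ref{kpl2} holds. That theorem gives a common point $(M,y)\in\bigcap_i V_i$, and $\ell(M,y)$ is the desired common $k$-transversal.
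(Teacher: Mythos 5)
Your proposal is correct and follows essentially the same route as the paper: identify $k$-flats with $\gamma_n^{n-k}$, let the fiber of $V_i$ over $M$ be $\pi_M(K_i)$, and apply Theorems~\ref{kpl1} and~\ref{kpl2}. The only difference is that you spell out the step the paper merely asserts (that orthogonal projection preserves non-antipodality, via lifting the extremizers of $\langle u,\cdot\rangle$), and your argument for that step is sound.
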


\begin{proof}
Let $\mathcal F = \{K_i\}_{i=1}^{n+1}$. Denote $V_i$ the set of $k$-flats, intersecting $K_i$. In this case the sets $V_i\cap L$ are projections of $K_i$ to $L$, hence they form a non-antipodal family. Applying Theorems~\ref{kpl1} or \ref{kpl2} to $V_i$ we obtain the required result.
\end{proof}

\begin{defn}
Consider two subsets $X,Y\subseteq\mathbb R^n$. The \emph{deviation} of $X$ from $Y$ is the following number
$$
\delta(X, Y) = \sup_{x\in X} \dist(x, Y).
$$
\end{defn}

\begin{cor}
\label{kpldev} Suppose $\mathcal F$ is a non-antipodal family of $n+1$ compact sets in $\mathbb R^n$, then there exists a $k$-flat $M$ such that the deviations of all the sets of $\mathcal F$ from $M$ are equal.
\end{cor}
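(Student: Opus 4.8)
The plan is to repeat the proof of Theorem~\ref{kpl1} almost verbatim, with nearest distances replaced by deviations and, accordingly, the minimum replaced by the maximum. Write $\mathcal F=\{K_i\}_{i=1}^{n+1}$ and $C=\conv\bigcup_i K_i$, and identify the space of $k$-flats with $\gamma_n^{n-k}$, so that a point $x$ of the fiber $L\in G_n^{n-k}$ corresponds to the flat $M_x=x+L^\perp$. Since $\dist(p,M_x)=|\pi_L(p)-x|$, where $\pi_L$ is the orthogonal projection onto $L$, the deviation equals $\delta(K_i,M_x)=\sup_{q\in\pi_L(K_i)}|q-x|$, and this is a continuous function of $x$. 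On the ball bundle $B(\gamma_n^{n-k})$ of a large radius $R$ I would put
$$
U_i=\Bigl\{x\in B(\gamma_n^{n-k}):x\ \text{lies in the fiber}\ L,\ \delta(K_i,M_x)=\max_{j=1,\ldots,n+1}\delta(K_j,M_x)\Bigr\}.
$$
These are closed and cover $B(\gamma_n^{n-k})$, and a common point $x^*$ of all of them makes all the numbers $\delta(K_i,M_{x^*})$ equal (each being the common maximum), so $M=M_{x^*}$ is a $k$-flat with the required property. It thus suffices to produce such a common point.

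The hard part will be to show that, for $R$ large, none of the $U_i$ contains a pair of antipodal points of the sphere bundle $S(\gamma_n^{n-k})$ of radius $R$; this is exactly where non-antipodality of $\mathcal F$ is used. Let $h_{K_i}$ be the support function, $h_{K_i}(v)=\max_{p\in K_i}(p,v)$; note $h_{\pi_L(K_i)}=h_{K_i}$ on $L$ and $\max_i h_{K_i}=h_C$. For a unit vector $u$ of a fiber $L$, and the point $Ru$ of that fiber, one has $|q-Ru|=R-(q,u)+O(1/R)$ uniformly over the bounded set $\pi_L(K_i)$, whence
$$
\delta(K_i,M_{Ru})=R+h_{K_i}(-u)+O(1/R),\qquad \delta(K_i,M_{-Ru})=R+h_{K_i}(u)+O(1/R).
$$
If the antipodal points $R_m u_m$ and $-R_m u_m$ (with $u_m$ a unit vector of a fiber $L_m$) lay in a single $U_i$ for some sequence $R_m\to\infty$, the membership conditions would give $h_{K_i}(\pm u_m)\ge h_{K_j}(\pm u_m)+O(1/R_m)$ for every $j$; passing to a subsequence with $L_m\to L^*$ and $u_m\to u^*\in L^*$ would force $h_{K_i}(u^*)=h_C(u^*)$ and $h_{K_i}(-u^*)=h_C(-u^*)$, so that $K_i$ meets the two support hyperplanes of $C$ with outer normals $u^*$ and $-u^*$, i.e.\ contains a pair of points antipodal with respect to $C$. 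This contradicts the non-antipodality of $\mathcal F$, so a suitable $R$ exists.

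With such an $R$ fixed, $\{U_i\}$ restricted to $S(\gamma_n^{n-k})$ is a covering of that space by $n+1$ closed sets, none containing an antipodal pair, and it is induced by the covering $\{U_i\}$ of $B(\gamma_n^{n-k})$; applying Theorem~\ref{bgcover} exactly as in the proof of Theorem~\ref{kpl1} then yields a common point of the family $\{U_i\}$, which is what we need. I expect the only genuinely nonroutine point to be the asymptotic estimate for $\delta(K_i,M_{Ru})$ together with the limiting argument in the previous paragraph; the rest runs parallel to the proofs of Theorem~\ref{kpl1} and Corollary~\ref{kpltrans}.
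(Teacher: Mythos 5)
Your proposal is correct and follows essentially the same route as the paper: you define the same sets (your $U_i$ coincide with the paper's $V_i$, since $\delta(\bigcup\mathcal F,M)=\max_j\delta(K_j,M)$), show that for large radius none of them contains an antipodal pair of the sphere bundle, and invoke Theorem~\ref{bgcover} exactly as in Theorem~\ref{kpl1}. The support-function asymptotics you supply is just a more explicit version of the limiting argument the paper refers to, so no substantive difference.
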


\begin{proof}
Let $\mathcal F = \{K_i\}_{i=1}^{n+1}$. Denote
$$
V_i = \{M\in\gamma_n^{n-k} : \delta(\bigcup\mathcal F, M) =
\delta(K_i, M)\}.
$$
Similar to the proof of Theorem~\ref{kpl1} we note that for large enough radius of balls in the ball bundle $B(\gamma_n^{n-k})$, none of the sets $V_i$ contain antipodal points in $S(\gamma_n^{n-k})$. Hence there exists non-empty intersection $\bigcap_{i=1}^{n+1} V_i$.
\end{proof}

Note again, that in Corollaries~\ref{kpltrans} and \ref{kpldev} the distance can be taken in any norm with smooth unit ball.

The fact $\hind S(\gamma_n^k) = n-1$ leads to another generalization of the Borsuk-Ulam theorem. Let us give some definitions.

\begin{defn}
Let $S^{n-1}\subset\mathbb R^n$ be the unit sphere. A \emph{$k$-subsphere} is an intersection of a $k$-dimensional linear subspace $L\subseteq\mathbb R^n$ with $S^{n-1}$.
\end{defn}

\begin{defn}
Let $S^{n-1}\subset\mathbb R^n$ be the unit sphere. A \emph{$k$-half-sphere} is a half of some $k$-subsphere.
\end{defn}

\begin{thm}
\label{halfsphere}
Let $V_1,\ldots, V_n$ be open subsets of $S^{n-1}$ such that each of $V_i$ intersects any $k$-subsphere. Then there exists a $k$-half-sphere that intersects every $V_i$.
\end{thm}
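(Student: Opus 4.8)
The plan is to reformulate the statement as an antipodal covering problem on the sphere bundle $S(\gamma_n^k)$, whose index we know to be $n-1$ by Theorem~\ref{indsg}, and then apply Theorem~\ref{antipcover2}. First I would assign to each point $(l, L) \in S(\gamma_n^k)$ (so $l$ is a unit vector and $L$ a $k$-dimensional subspace with $l \perp L$) the $k$-half-sphere $H(l, L) = \{x \in S^{n-1} : x \in \lin(l \cup L),\ (x, l) \ge 0\}$ — that is, the ``upper'' half, determined by $l$, of the $k$-subsphere $S^{n-1} \cap \lin(l \cup L)$ (here $\lin$ denotes linear span, a $(k+1)$-dimensional subspace). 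Note that $H(\sigma(l, L)) = H(-l, L)$ is the complementary half-sphere, so passing from $(l,L)$ to its antipode swaps the two halves. Now define closed sets $U_i \subseteq S(\gamma_n^k)$ by
$$
U_i = \{(l, L) \in S(\gamma_n^k) : H(l, L) \cap V_i = \emptyset\},
$$
using that the $V_i$ are open so that the condition ``$H(l,L)$ misses $V_i$'' is closed, and that $H(l,L)$ depends continuously on $(l,L)$ in the Hausdorff metric.

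The key observation is the non-antipodality of each $U_i$: if both $(l, L) \in U_i$ and $\sigma(l, L) = (-l, L) \in U_i$, then $V_i$ would be disjoint from both halves $H(l,L)$ and $H(-l,L)$, hence disjoint from the whole $k$-subsphere $S^{n-1} \cap \lin(l \cup L)$, contradicting the hypothesis that $V_i$ meets every $k$-subsphere. So none of the $U_i$ contains an antipodal pair. Next I must check that $\mathcal{F} = \{U_1, \dots, U_n\}$ covers $S(\gamma_n^k)$: if some $(l, L)$ lay outside every $U_i$, then $H(l, L)$ would meet every $V_i$ and we would already be done. Thus we may assume the $U_i$ form a closed covering by non-antipodal sets, and apply Theorem~\ref{antipcover2} with $N = n$ and $\hind S(\gamma_n^k) = n-1$: there is a point $(l, L)$ such that at least $(n-1) + 2 = n+1$ of the sets $U_i$ contain either $(l,L)$ or $\sigma(l, L)$. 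Since there are only $n$ sets, by pigeonhole there is an index $i$ with $(l, L) \in U_i$ \emph{and} $(-l, L) \in U_i$ — precisely the antipodal pair we just ruled out. This contradiction shows the covering assumption is untenable, so some $(l, L)$ lies outside all $U_i$, and $H(l, L)$ is the desired $k$-half-sphere meeting every $V_i$.

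The main obstacle I anticipate is a bookkeeping subtlety rather than a deep one: the set $U_i$ is defined via the \emph{closed} half-sphere $H(l,L)$ (including its boundary $(k-1)$-subsphere), and one should make sure the non-antipodality argument still goes through — two closed complementary halves share their boundary great $(k-1)$-subsphere, so $H(l,L) \cup H(-l,L)$ is the full $k$-subsphere with no gap, and a $V_i$ missing both closed halves indeed misses the whole $k$-subsphere, which is what the hypothesis forbids; so this is fine. One should also confirm that the continuity of $(l,L) \mapsto H(l,L)$ (Hausdorff metric on closed subsets of $S^{n-1}$), together with openness of $V_i$, really makes $U_i$ closed: if $(l_m, L_m) \to (l, L)$ with each $H(l_m, L_m)$ disjoint from the open set $V_i$, then by a standard compactness argument $H(l, L) \subseteq S^{n-1} \setminus V_i$, which is closed, so $(l, L) \in U_i$. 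With these routine verifications in place, the topological input is entirely carried by Theorems~\ref{indsg} and~\ref{antipcover2}, and the proof is short.
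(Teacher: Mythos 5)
Your overall strategy is the paper's own: parameterize half-spheres by the sphere bundle over the Grassmannian, let $U_i$ be the closed set of half-spheres disjoint from $V_i$, note that no $U_i$ contains an antipodal pair because two complementary halves cover a whole subsphere which must meet $V_i$, and conclude from the index computation of Theorem~\ref{indsg} that $n$ such closed sets cannot cover the space. Your detour through Theorem~\ref{antipcover2} with $N=n$ (getting the impossible count $n+1>N$) is just one legitimate way of invoking the covering form of the Borsuk--Ulam theorem, which the paper's proof cites directly; that part is fine.

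There is, however, a bookkeeping error in the parameterization which, read literally, makes you prove a shifted statement rather than Theorem~\ref{halfsphere}. In the paper's conventions a $k$-subsphere is $L\cap S^{n-1}$ with $\dim L=k$, and a point of $S(\gamma_n^k)$ is a pair $(l,L)$ with $l\in L$ (a unit vector \emph{in} the fiber), not $l\perp L$; the $k$-half-sphere attached to $(l,L)$ should be $\{x\in L\cap S^{n-1} : (x,l)\ge 0\}$, with the antipode $(-l,L)$ giving the complementary half. The pairs $(l,L)$ with $l\perp L$, $\dim L=k$, that you use form (after $L\mapsto L^{\perp}$) the bundle $S(\gamma_n^{n-k})$ --- which happens to have the same index $n-1$, so the topological input is unharmed --- but the set $H(l,L)$ you attach to such a pair is half of $S^{n-1}\cap\lin(l\cup L)$, whose span is $(k+1)$-dimensional, i.e.\ a $(k+1)$-half-sphere in the paper's indexing. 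Your non-antipodality step still works (the hypothesis that $V_i$ meets every $k$-subsphere implies it meets every $(k+1)$-subsphere), but the conclusion delivered is the existence of a $(k+1)$-half-sphere meeting every $V_i$, which is formally weaker than the theorem, since every $k$-half-sphere lies in a $(k+1)$-half-sphere but not conversely. The repair is immediate: use the correct fiber description $l\in L$ and the half-sphere $\{x\in L\cap S^{n-1}:(x,l)\ge0\}$; then your closedness, non-antipodality and covering arguments go through verbatim and coincide with the paper's proof.
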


\begin{proof}
The space of $k$-subspheres is parameterized by the Grassmannian $G_n^k$, the space of all  $k$-half-spheres is parameterized by the half-spaces in the fibers of $\gamma_n^k$, with boundaries containing the origin, i.e. parameterized by $S(\gamma_n^k)$. 

Denote $U_i$ the set of $k$-half-spheres disjoint with $V_i$. This set is compact and does not contain antipodal pairs. By Theorem~\ref{indsg} $\hind S(\gamma_n^k) = n-1$, and by the generalized Borsuk-Ulam theorem the sets $U_i$ cannot cover $S(\gamma_n^k)$, that is what we need.
\end{proof}

\begin{thm}
Let $V_1,\ldots, V_{n+1}$ be open subsets of $S^{n-1}$ such that each of $V_i$ intersects any  $k$-subsphere. Then either there exist a $k$-half-sphere that intersects every $V_i$; or for any partition $[n+1]=I_1\cap I_2$ into non-empty sets there exists a pair of $k$-half-spheres $H_1$ and $H_2$, being the complementary halves of one $k$-subsphere, such that $V_i\cap H_1 = \emptyset$ for any $i\in I_1$, and $V_i\cap H_2 = \emptyset$ for any $i\in I_2$.
\end{thm}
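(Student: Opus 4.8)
The plan is to mimic the proof of Theorem~\ref{halfsphere}, but instead of applying the bare generalized Borsuk--Ulam theorem (Lemma~\ref{index-bu}), apply the covering result Theorem~\ref{bgcover} to the space $S(\gamma_n^k)$ of $k$-half-spheres, which has $\hind S(\gamma_n^k) = n-1$ by Theorem~\ref{indsg}. As before, parameterize the space of $k$-subspheres by $G_n^k$ and the space of $k$-half-spheres by $S(\gamma_n^k)$, where a point $(m,L)$ with $L\in G_n^k$ and $m\in S(L)$ is the half of the subsphere $L\cap S^{n-1}$ lying in the closed half-space $\{x\in L : (x,m)\ge 0\}$; the antipodal involution $\sigma$ sends $(m,L)\mapsto(-m,L)$, which is exactly passing to the complementary half of the same $k$-subsphere. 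For each $i\in[n+1]$ set
$$
U_i = \{(m,L)\in S(\gamma_n^k) : H_{(m,L)}\cap V_i = \emptyset\},
$$
where $H_{(m,L)}$ denotes the closed $k$-half-sphere represented by $(m,L)$.

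The first step is to check the hypotheses of Theorem~\ref{bgcover}. The $U_i$ are closed since the $V_i$ are open and the dependence of $H_{(m,L)}$ on $(m,L)$ is continuous. None of the $U_i$ contains an antipodal pair: if $(m,L)\in U_i$ and $(-m,L)\in U_i$, then both complementary halves of the subsphere $L\cap S^{n-1}$ miss $V_i$, so $L\cap S^{n-1}$ itself misses $V_i$, contradicting the assumption that $V_i$ meets every $k$-subsphere. Now there are two cases. If the $U_i$ do \emph{not} cover $S(\gamma_n^k)$, then some point $(m,L)$ lies in no $U_i$, i.e. the half-sphere $H_{(m,L)}$ meets every $V_i$; this is the first alternative of the theorem. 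If the $U_i$ \emph{do} cover $S(\gamma_n^k)$, then since $\hind S(\gamma_n^k)=n-1$ and we have $n+1=(n-1)+2$ closed sets with no antipodal pairs, Theorem~\ref{bgcover} applies: for the given partition $[n+1]=I_1\cup I_2$ there is a point $c=(m,L)$ with $c\in\bigcap_{i\in I_1}U_i$ and $\sigma(c)\in\bigcap_{i\in I_2}U_i$. Unwinding the definitions, $H_1 = H_c$ misses every $V_i$ with $i\in I_1$, and $H_2 = H_{\sigma(c)}$, its complementary half in the $k$-subsphere $L\cap S^{n-1}$, misses every $V_i$ with $i\in I_2$. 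This is exactly the second alternative.

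One small point to verify is that the covering $\{U_i\}$ of $S(\gamma_n^k)$ by itself suffices — we do not need it to be induced by a covering of $B(\gamma_n^k)$, since we only invoke the first conclusion of Theorem~\ref{bgcover} (the existence of the antipodal pair $c,\sigma(c)$), not the ``moreover'' part. I expect the only genuine obstacle to be bookkeeping around open versus closed sets: $V_i$ open makes $U_i$ closed, which is what Theorem~\ref{bgcover} wants, but one must make sure the no-antipodal-pair argument really uses that $V_i$ meets every $k$-subsphere (not just every $k$-half-sphere), and that ``the complementary halves of one $k$-subsphere'' in the statement corresponds precisely to the pair $H_c, H_{\sigma(c)}$ sitting over the same fiber $L$ — which is immediate since $\sigma$ acts fiberwise. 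Everything else is a direct transcription of the proof of Theorem~\ref{bgcover}'s hypotheses and an application.
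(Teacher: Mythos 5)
Your proposal is correct and follows essentially the same route as the paper: the paper likewise parameterizes $k$-half-spheres by $S(\gamma_n^k)$, takes $U_i$ to be the half-spheres disjoint from $V_i$, and invokes Theorems~\ref{antipcover} and~\ref{indsg} (of which the first claim of Theorem~\ref{bgcover} is just the packaged form) to produce the antipodal pair when the $U_i$ cover. Your explicit checks of closedness, the no-antipodal-pair condition, and the covering dichotomy are exactly the steps the paper leaves implicit.
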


\begin{proof}
As in the previous theorem, denote $U_i$ the set of $k$-half-spheres disjoint with $V_i$. Now the claim follows from Theorems~\ref{antipcover} and \ref{indsg}.
\end{proof}

\section{Helly-type theorems for common transversals}
\label{hellytype}

Here we state several theorems, close to the Horn-Klee theorem and its generalizations from~\cite{dol2001}. V.L.~Dolnikov has some similar results (private communication) which are not published yet.

\begin{thm}
Suppose $n+1$ families of $1$-convex compact sets $\{\mathcal F_i\}_{i\in[n+1]}$ are given in $\mathbb R^n$. Let any two sets of the same family have non-empty intersection. Then one of the alternatives holds.

1) The family $\bigcup_{i\in[n+1]} \mathcal F_i$ has $n-1$-transversal (a hyperplane);

2) For any partition of the index set $[n+1]$ into non-empty $I_1$ and $I_2$ there exists a hyperplane $h$ and a set of representatives $C_i\in\mathcal F_i$ $(i\in[n+1])$ so that the sets $\{C_i\}_{i\in I_1}$ are on one side of $h$, while the sets $\{C_i\}_{i\in I_2}$ are on the other side.
\end{thm}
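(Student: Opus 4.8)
\textit{Proof proposal.} The plan is to reduce the statement to Corollary~\ref{hyperpl1} by replacing each color class $\mathcal F_i$ with a single closed subset $V_i\subseteq\gamma_n^1$ that records its common hyperplane transversals. Recall that the space of hyperplanes in $\mathbb R^n$ is identified with the total space of $\gamma_n^1\to G_n^1$: a hyperplane orthogonal to a unit vector $u$ at signed level $c$ is the vector $cu$ in the fiber over the line $\mathbb R u\in G_n^1$, so each fiber $L$ is identified with $\mathbb R$. For a compact set $K$ let $V_K\subseteq\gamma_n^1$ be the set of hyperplanes meeting $K$; it is closed, and its intersection with the fiber $L$ over $\mathbb R u$ is the segment $[\,\min_{x\in K}\langle x,u\rangle,\ \max_{x\in K}\langle x,u\rangle\,]$ — the projection of $K$ onto the line $\mathbb R u$, which is a genuine segment precisely because $K$ is $1$-convex — and it depends continuously on $L$ in the Hausdorff metric.

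Next, fix $i$ and a fiber $L$ over $\mathbb R u$. Any two members of $\mathcal F_i$ intersect, and a common point of $K,K'\in\mathcal F_i$ projects into both $V_K\cap L$ and $V_{K'}\cap L$; hence the segments $\{V_K\cap L:K\in\mathcal F_i\}$ pairwise intersect, and by Helly's theorem on the line they have a common point. Therefore $V_i:=\bigcap_{K\in\mathcal F_i}V_K$ is a closed subset of $\gamma_n^1$ meeting every fiber $L$ over $\mathbb R u$ in the non-empty segment $[\,\max_{K\in\mathcal F_i}\min_{x\in K}\langle x,u\rangle,\ \min_{K\in\mathcal F_i}\max_{x\in K}\langle x,u\rangle\,]$, which (the families being finite) depends continuously on $L$. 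A point of $V_i$ is exactly a hyperplane that is a transversal for all of $\mathcal F_i$, so $V_1,\dots,V_{n+1}$ satisfy the hypotheses of Corollary~\ref{hyperpl1}.

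Apply Corollary~\ref{hyperpl1} to $V_1,\dots,V_{n+1}$. If $\bigcap_{i\in[n+1]}V_i\neq\emptyset$, any point of it is a hyperplane transversal for $\bigcup_{i\in[n+1]}\mathcal F_i$, giving alternative~1). Otherwise, take any partition $[n+1]=I_1\cup I_2$ into non-empty sets; Corollary~\ref{hyperpl1} yields a fiber $L$ over some $\mathbb R u$ in which the segment families $\{V_i\cap L:i\in I_1\}$ and $\{V_j\cap L:j\in I_2\}$ are separated. For a one-dimensional fiber this means there is a level $c_0$ so that, after possibly interchanging $I_1$ and $I_2$, $\max(V_i\cap L)<c_0$ for all $i\in I_1$ and $\min(V_j\cap L)>c_0$ for all $j\in I_2$. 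Since the right endpoint of $V_i\cap L$ equals $\min_{K\in\mathcal F_i}\max_{x\in K}\langle x,u\rangle$, for each $i\in I_1$ there is $C_i\in\mathcal F_i$ with $\max_{x\in C_i}\langle x,u\rangle<c_0$, i.e.\ $C_i\subseteq\{x:\langle x,u\rangle<c_0\}$; symmetrically, for each $j\in I_2$ there is $C_j\in\mathcal F_j$ with $C_j\subseteq\{x:\langle x,u\rangle>c_0\}$. The hyperplane $h=\{x:\langle x,u\rangle=c_0\}$ together with the representatives $C_1,\dots,C_{n+1}$ then realizes alternative~2).

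The step requiring the most care is the passage from $\mathcal F_i$ to $V_i$: one must check that $V_i$ meets every fiber in a non-empty segment — this uses $1$-convexity, the pairwise-intersection hypothesis, and one-dimensional Helly — that varies continuously with the fiber, and, crucially for the second alternative, that each of the two endpoints of $V_i\cap L$ is attained by a single member of $\mathcal F_i$, since this is exactly what produces the representatives $C_i$. Everything else is the routine dictionary between ``separated families of segments in a fiber'' and ``compact sets lying on opposite sides of a hyperplane''. (If infinite families were allowed one would have to supplement the continuity claim with a compactness argument; the finite case is the essential one.)
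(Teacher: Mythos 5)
Your proposal is correct and follows essentially the same route as the paper: fiberwise you take the intersection of the projections of the members of $\mathcal F_i$ (non-empty by pairwise intersection, $1$-convexity and one-dimensional Helly), apply Corollary~\ref{hyperpl1} to the resulting sets $V_i$, and extract the representatives $C_i$ from the fact that each endpoint of $V_i\cap L$ is an endpoint of the projection of a single member. Your explicit handling of the endpoint attainment and the finiteness caveat is a reasonable elaboration of what the paper leaves implicit.
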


\begin{proof}
For any line $l\in G_n^1$ denote $\pi_l$ the orthogonal projection onto this line, and put
$$
V_i(l) = \bigcap_{C\in\mathcal F_i} \pi_l(C).
$$
These sets are nonempty, since for any $i$ any two of the segments in $\{\pi_l(C)\}_{C\in\mathcal F_i}$ have an intersection. It is clear that $V_i(l)$ depend continuously on $l$. Put $V_i = \bigcup_{l\in G_n^1} V_i(l)$.

Apply Corollary~\ref{hyperpl1} to the family $\{V_i\}$. The first alternative of Corollary~\ref{hyperpl1} obviously corresponds to the first alternative of this theorem. 

In the other case, for any partition $[n+1] = I_1\cup I_2$ there exists a hyperplane $h$, that separates $\{V_i\}_{i\in I_1}$ and $\{V_i\}_{i\in I_2}$. Consider the projection onto the line $l\perp h$, take some directions as ``left'' and ``right'' on this line. Without loss of generality we can assume that $\{V_i\}_{i\in I_1}$ are to the left of $\pi_l(h)$, and $\{V_i\}_{i\in I_2}$ are to the right of $\pi_l(h)$. The right end of the segment $V_i$ ($i\in I_1$) is a right end of some $\pi_l(C_i)$ ($C_i\in\mathcal F_i$), the left end of the segment $V_i$ ($i\in I_2$) is a left end of some $\pi_l(C_i)$ ($C_i\in\mathcal F_i$). Hence $\{C_i\}_{i\in[n+1]}$ are the required system of representatives for the partition $[n+1] = I_1\cup I_2$.
\end{proof}

\begin{thm}
\label{colhellyanalog}
Let $0<k<n$ and suppose that $n-k+1$ families $\{\mathcal F_i\}_{i\in[n-k+1]}$ of convex compact sets are given in $\mathbb R^n$. Then one of the following alternatives holds.

1) There exists a system of representatives $K_i\in\mathcal F_i$ such that $\bigcap_{i\in[n-k+1]} K_i=\emptyset$;

2) There exists $i\in[n-k+1]$ such that in $\mathcal F_i$ any $k+1$ or less sets have a $k-1$-transversal;

3) There exists a family of parallel $k$-flats $\{\alpha_i\}_{i\in[n-k+1]}$ such that for any $i\in[n-k+1]$ the flat $\alpha_i$ is a $k$-transversal for $\mathcal F_i$.

If $2k\le n$, then the third alternative is only possible in the case $k=1$ or $k=2$ and $n=2^l$.
\end{thm}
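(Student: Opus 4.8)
The plan is to assume that alternatives 1 and 2 both fail and deduce alternative 3, the arithmetic restriction coming from the index of the oriented Grassmannian.

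\emph{Fiberwise reduction.} For $L\in G_n^k$ let $\pi_{L^\perp}$ be the orthogonal projection onto $L^\perp\cong\mathbb R^{n-k}$. A $k$-flat with direction $L$ is a transversal for a convex compact $K$ iff its image point lies in $\pi_{L^\perp}(K)$, so $\mathcal F_i$ has a transversal $k$-flat parallel to $L$ iff $\bigcap_{K\in\mathcal F_i}\pi_{L^\perp}(K)\neq\emptyset$. If alternative 1 fails, every system of representatives $K_i\in\mathcal F_i$ has a common point, hence so does every system $\pi_{L^\perp}(K_i)$; since the projected families $\pi_{L^\perp}(\mathcal F_1),\dots,\pi_{L^\perp}(\mathcal F_{n-k+1})$ are exactly $n-k+1$ families of convex compacta in $\mathbb R^{n-k}$, the colored Helly theorem gives, for every $L$, an index $i$ with $\bigcap_{K\in\mathcal F_i}\pi_{L^\perp}(K)\neq\emptyset$. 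Thus the closed sets $V_i\subseteq G_n^k$ of directions admitting a transversal $k$-flat for $\mathcal F_i$ cover $G_n^k$, and a common point of $V_1,\dots,V_{n-k+1}$ is exactly a system of parallel transversals, i.e. alternative 3.

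\emph{Using the failure of alternative 2.} At most $k$ convex sets always have a $(k-1)$-transversal (a $(k-1)$-flat through at most $k$ chosen points), so the negation of alternative 2 for $\mathcal F_i$ means some $(k+1)$-subfamily $\mathcal G_i=\{K_0^i,\dots,K_k^i\}$ has no $(k-1)$-transversal, equivalently every system of representatives of $\mathcal G_i$ is affinely independent (``general position''). I would then show that $V_i$ lifts to a continuous section over ${G_n^k}^+$, so the preimage $\tilde V_i$ of $V_i$ in ${G_n^k}^+$ is inessential, $\hind\tilde V_i=0$. Indeed, for $L\in V_i$ pick $p\in\bigcap_j\pi_{L^\perp}(K_j^i)$ and $x_j\in K_j^i$ with $\pi_{L^\perp}(x_j)=p$; the $x_j$ are affinely independent and span the $k$-flat $\pi_{L^\perp}^{-1}(p)$, whose direction is $L$, so the ordered frame $(x_0,\dots,x_k)$ orients $L$. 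This orientation is independent of all choices because the relevant choice spaces, $\bigcap_j\pi_{L^\perp}(K_j^i)$ and the slices $K_j^i\cap\pi_{L^\perp}^{-1}(p)$, are convex hence connected, and affine independence never degenerates along them by the general-position hypothesis; it also varies continuously with $L$, using local continuous selections of a transversal flat and of the points $x_j$. Making this orientation section airtight is the step I expect to be the main obstacle.

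\emph{Conclusion via the Grassmannian index.} Now ${G_n^k}^+$, a compact metric $Z_2$-space, is covered by the $n-k+1$ closed, invariant, inessential sets $\tilde V_1,\dots,\tilde V_{n-k+1}$; by Theorem~\ref{ls} this forces $\hind{G_n^k}^+\le n-k$ and, when $\hind{G_n^k}^+=n-k$, that all $n-k+1$ of them have a common point. Since $\hind{G_n^k}^+\ge n-k$ by Theorem~\ref{orgindex}, there are two cases. If $\hind{G_n^k}^+=n-k$, we obtain a common point of the $\tilde V_i$ and hence, projecting to $G_n^k$, alternative 3. If $\hind{G_n^k}^+>n-k$ the covering above is impossible, so the assumption that alternatives 1 and 2 both fail is contradictory and one of them holds. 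Finally, by Theorem~\ref{orgindex} the equality $\hind{G_n^k}^+=n-k$ occurs, in the range $2k\le n$, only for $k=1$ or for $k=2$ with $n=2^l$, which is precisely the asserted restriction on when alternative 3 is reached.
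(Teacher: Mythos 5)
Your argument is correct and essentially identical to the paper's proof: the colored Helly theorem applied to the projections gives the covering of ${G_n^k}^+$ by the sets of directions admitting a transversal, the $(k+1)$-subfamily without a $(k-1)$-transversal orients each transversal flat and hence yields an equivariant map of each covering set to $\{\pm 1\}$ (inessentiality), and Theorem~\ref{ls} combined with Theorem~\ref{orgindex} gives either a contradiction or alternative 3, with the equality case of the index bound producing exactly the stated restriction $k=1$ or $k=2$, $n=2^l$. The orientation-continuity step you flag is treated just as briefly in the paper; it can be closed without selections by observing that the comparison sign is continuous (a nonvanishing determinant) and fiberwise constant on the compact space of all choices $(L^+,p,x_0,\dots,x_k)$, whose projection onto $\tilde V_i$ is a closed surjection with connected fibers, hence a quotient map through which the sign descends continuously.
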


\begin{proof}
Suppose the first alternative does not hold. Take any $L\in G_n^{n-k}$ and consider the projection of all the families to $L$. By the colored Helly theorem for some $i$ the family $\pi_L(\mathcal F_i)$ has a common point, this point corresponds to some $k$-transversal to $\mathcal F_i$, orthogonal to $L$. Denote
$$
U_i = \{L\in {G_n^k}^+ : \bigcap \pi_L(\mathcal F_i) \neq\emptyset\}.
$$

Suppose that the alternative (2) fails. Consider the subfamily (its cardinality should be exactly $k+1$) $K_1, K_2,\ldots, K_{k+1}\in\mathcal F_i$ that does not have $k-1$-transversal. For any $L\in U_i$ take the corresponding $k$-transversal $\alpha$ for the family $\mathcal F_i$, and compare the orientation on $\alpha$, given by any system of representatives $x_i\in K_i\cap\alpha$ ($i\in[k+1]$) with the orientation of $\alpha$, corresponding to $L$. All the possible systems $(x_1, \ldots, x_{k+1})$ give the same orientation, since they are never contained in a single $k-1$-flat. If the orientations coincide, assign the sign ``$+$'' to $L$ otherwise assign ``$-$'' to it. 

Thus the sets $U_i$ are mapped $Z_2$-equivariantly to $\{+1, -1\}$, and by Theorem~\ref{orgindex} and Theorem~\ref{ls} the sets $U_i$ should have a common point, that is equivalent to the alternative (3). Theorem~\ref{orgindex} tells, that it is only possible when $k=1$, or $k=2$ and $n=2^l$.
\end{proof}

In the case, when the number of families is small compared to $n$, Theorem~\ref{colhellyanalog} can be strengthened.

\begin{thm}
\label{colhellyanalog2}
Let $n = 2k + 1\ge 3$ and suppose that $2$ families $\mathcal F_1, \mathcal F_2$ of convex compact sets are given in $\mathbb R^n$. Then one of the following alternatives holds.

1) There exist two representatives $K_1\in\mathcal F_1$, $K_2\in\mathcal F_2$ such that $K_1\cap K_2=\emptyset$;

2) In some of the families $\mathcal F_i$ any $k+2$ or less sets have $k$-transversal.
\end{thm}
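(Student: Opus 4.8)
The plan is to argue by contradiction and reduce the statement to a Borsuk--Ulam estimate for the \emph{separated} families of Section~\ref{mespart}. The reason two families are enough is that the relevant sphere of directions, $S^{2k}=S(\gamma_{2k+1}^1)$, has index $2k=n-1\ge 2$, which is far more than the index $1$ that a cover by two sets naively demands; the surplus is exactly what upgrades the conclusion from a $(k-1)$-transversal statement to a $k$-transversal statement.

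First I dispose of the trivial cases: any $r\le k+1$ convex compact sets in $\mathbb R^{2k+1}$ have a $k$-transversal, since a point chosen in each spans an affine subspace of dimension $\le k$, which lies in a $k$-flat meeting all of them. Hence if $|\mathcal F_1|\le k+1$ or $|\mathcal F_2|\le k+1$ alternative (2) holds, and we may assume $|\mathcal F_1|,|\mathcal F_2|\ge k+2$. Suppose (1) fails, i.e.\ every $K_1\in\mathcal F_1$ meets every $K_2\in\mathcal F_2$, and suppose (2) also fails: for each $i$ there is $\mathcal G_i\subseteq\mathcal F_i$ without a $k$-transversal, necessarily of size exactly $k+2$ (smaller ones always have one), and the hypothesis is inherited, so every member of $\mathcal G_1$ meets every member of $\mathcal G_2$. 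A $(k+2)$-element family of convex compacts in $\mathbb R^{2k+1}$ has \emph{no} $k$-transversal iff every system of representatives of it is affinely independent: a $k$-transversal contains one representative from each set, and $k+2$ points on a $k$-flat are affinely dependent; conversely the affine hull of any affinely dependent representative system has dimension $\le k$ and extends to a $k$-transversal. So $\mathcal G_1$ and $\mathcal G_2$ are separated families (and, by the Radon-type argument used in Section~\ref{mespart}, each can be split along an arbitrary hyperplane), while $A_i\cap B_j\ne\emptyset$ for all indices of representatives.

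For $e\in S^{2k}=S(\gamma_{2k+1}^1)$ let $\pi_e$ be the orthogonal projection onto the line $\mathbb{R}e$. The interval families $\pi_e(\mathcal F_1)$ and $\pi_e(\mathcal F_2)$ again have the property that every member of the first meets every member of the second, so the colored Helly theorem in dimension $1$ (two families on the line) gives $\bigcap\pi_e(\mathcal F_1)\ne\emptyset$ or $\bigcap\pi_e(\mathcal F_2)\ne\emptyset$; equivalently, with $U_i=\{e\in S^{2k}:\mathcal F_i\text{ has a hyperplane transversal with normal }e\}$ we get $U_1\cup U_2=S^{2k}$. Both $U_i$ are closed and invariant under $e\mapsto -e$, and $\hind S^{2k}=2k$ by Theorem~\ref{indsg}. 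By the subadditivity of the cohomological index, $2k=\hind(U_1\cup U_2)\le\hind U_1+\hind U_2+1$, so one of them, say $U_1$, satisfies $\hind U_1\ge k$. It remains to contradict this with the separated subfamily $\mathcal G_1=\{A_1,\dots,A_{k+2}\}\subseteq\mathcal F_1$. The crux, and the main obstacle, is the estimate: \emph{if $\mathcal G$ is a separated family of $k+2$ convex compact sets in $\mathbb R^{2k+1}$, then the set of normal directions of its hyperplane transversals has index at most $k-1$}. Intuitively, since every representative system of $\mathcal G$ is affinely independent and spans a $(k+1)$-flat, a transversal hyperplane must be ``almost parallel'' to such a flat, confining its normal to a neighborhood of a $(k-1)$-dimensional subsphere; the delicate point is to rule out any ``equatorial'' normal direction, because such a direction would force (a perturbation of) the corresponding slice through the sets to be a genuine $k$-transversal of $\mathcal G$, contrary to $\mathcal G$ being separated — and this is where the separatedness is really consumed. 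I would prove this estimate by induction on the ambient dimension, passing to a transversal hyperplane $h$, in which the induced family $\{A_i\cap h\}$ is again separated. Granting it, the normal directions of hyperplane transversals of $\mathcal F_1$ are contained in those of $\mathcal G_1$, so $\hind U_1\le k-1$, contradicting $\hind U_1\ge k$. Hence (2) holds, and the theorem follows.
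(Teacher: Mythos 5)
Your reduction is on the right track and, up to the crux, coincides with the paper's argument: assuming alternative (1) fails, projecting to each line and applying the one-dimensional (colored) Helly theorem shows that the sets $U_1,U_2\subseteq S^{2k}$ of normal directions of hyperplane transversals cover $S^{2k}$, and Lemma~\ref{hind-union} forces $\hind U_i\ge k$ for some $i$; the characterization of ``no $k$-transversal'' for a $(k+2)$-element family via affinely independent representative systems is also correct, as is the reduction to a subfamily $\mathcal G_1$ of size exactly $k+2$.

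However, the decisive step --- that for a $(k+2)$-element family in $\mathbb R^{2k+1}$ with no $k$-transversal the set of hyperplane-transversal normal directions has index at most $k-1$ --- is exactly what you do \emph{not} prove; you state it, give a heuristic, and write ``Granting it''. This is the real content of the theorem, and your heuristic is misleading: the normals are not confined to a neighborhood of any fixed $(k-1)$-subsphere, since the $(k+1)$-flat spanned by a representative system varies with the transversal, so no ``equatorial direction'' needs to be excluded; the statement is homotopical, not a geometric confinement. The sketched induction on the ambient dimension by restricting to a transversal hyperplane $h$ does not have a clear inductive statement either: the induced family $\{A_i\cap h\}$ consists of $k+2$ sets in a $2k$-dimensional flat, so the numerology $n=2k+1$, $k+2$ sets is destroyed, $h$ varies with the normal direction, and the hyperplane-transversal directions \emph{inside} $h$ are not the object you need to bound. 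The paper closes this gap with Lemma~\ref{transmap}: after reducing to strictly convex sets (so that representatives $x_i(\tau)\in K_i\cap\tau$ can be chosen continuously in the transversal $\tau$), one uses the affine independence to form the quotient $\mathbb R^n/L(x_1,\dots,x_{k+2})$, trivializes this $(n-k-1)$-dimensional quotient bundle over the contractible product $K_1\times\dots\times K_{k+2}$, and sends each oriented hyperplane transversal to the image of its direction space, obtaining a $Z_2$-map from the space of oriented hyperplane transversals $Y_1$ to $G_k^{(k-1)+}\cong S^{k-1}$; since the projection $Y_1\to U_1$ has convex fibers and is an equivariant homotopy equivalence, monotonicity of the index gives $\hind U_1\le k-1$, the desired contradiction. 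Without an argument of this kind (or some replacement for it), your proof is incomplete at its central point.
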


\begin{thm}
\label{colhellyanalog3}
Let $k>2, 2k<n+2$ and suppose that $k$ families $\mathcal F_1, \ldots,\mathcal F_k$ of convex compact sets are given in $\mathbb R^n$. Suppose that for some $m\le n-k+1$ the inequality 
$$
2^{\lceil\log_2 n\rceil}\ge k2^{\lceil\log_2(n-m)\rceil}+2
$$
holds. Then one of the following alternatives holds.

1) There is a system of representatives $K_1\in\mathcal F_1,\ldots,K_k\in\mathcal F_k$ such that $\bigcap_{i=1}^k K_i=\emptyset$;

2) In some of the families $\mathcal F_i$ any $m+1$ or less sets have $m-1$-transversal.
\end{thm}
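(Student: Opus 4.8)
The plan is to argue by contradiction, extending the scheme used in the proof of Theorem~\ref{colhellyanalog}. Assume alternative (1) fails, so every system of representatives $K_1\in\mathcal F_1,\dots,K_k\in\mathcal F_k$ meets in $\mathbb R^n$, and assume alternative (2) fails as well. First note that any $m$ or fewer convex compact sets automatically have an $(m-1)$-transversal: choose one point in each, these at most $m$ points lie in an affine subspace of dimension $\le m-1$. Hence the failure of (2) means that for every $i\in[k]$ there is a subfamily $\mathcal G_i\subseteq\mathcal F_i$ of exactly $m+1$ sets with no $(m-1)$-transversal; we fix an ordering of each $\mathcal G_i$. The inequality $2k<n+2$ translates into $m-1\le n-k$, so an $(m-1)$-flat, and even an $m$-flat, fits inside any flat of codimension $k-1$; this is what allows us to connect the two alternatives.

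\emph{The covering step.} Since every system of representatives meets, so does every system of representatives of the orthogonal projections of $\mathcal F_1,\dots,\mathcal F_k$ onto any $(k-1)$-dimensional subspace; the colored Helly theorem then produces, for each such subspace, an index $i$ with $\bigcap\pi(\mathcal F_i)\neq\emptyset$, that is, an $(n-k+1)$-transversal $\beta$ of $\mathcal F_i$ orthogonal to it. Now use the failure of (2): inside $\beta$ the chosen representatives of $\mathcal G_i$ are never contained in an $(m-1)$-flat (such a flat would be an $(m-1)$-transversal of $\mathcal G_i$), so, being $m+1$ ordered points, they affinely span and orient an $m$-flat $\gamma\subseteq\beta$. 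Recording the direction of $\gamma$ with its orientation yields, for each $i$, a closed invariant set $U_i$ in an oriented Grassmannian $X$ of $\mathbb R^n$ together with an equivariant map from $U_i$ to an oriented Grassmann-type space attached to an $(n-m)$-dimensional complement; convexity, hence contractibility, of the relevant fibres (the set of transversals over a fixed direction, and each trace $K\cap\beta$) is what makes the orientation of $\gamma$ well defined and the construction continuous and equivariant. By Theorem~\ref{orgindex} that target space has index at most $2^{\lceil\log_2(n-m)\rceil}-1$, so by Lemma~\ref{index-bu} we get $\hind U_i\le 2^{\lceil\log_2(n-m)\rceil}-1$; and the $U_i$ cover $X$, which, again by Theorem~\ref{orgindex} (here $2k<n+2$ keeps the relevant plane dimension on the ``small'' side), has $\hind X\ge 2^{\lceil\log_2 n\rceil}-2$.

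\emph{The counting step.} The cohomological index is subadditive under closed invariant coverings: if $X=A\cup B$ then $\hind X\le\hind A+\hind B+1$ — lift $w^{\hind A+1}$ from $H^*_G(X,A)$ and $w^{\hind B+1}$ from $H^*_G(X,B)$ and cup them in $H^*_G(X,X)=0$ — and hence a covering of $X$ by $k$ sets of index $\le p$ forces $\hind X\le k(p+1)-1$. With $p=2^{\lceil\log_2(n-m)\rceil}-1$ this gives
$$
2^{\lceil\log_2 n\rceil}-2\le\hind X\le k\,2^{\lceil\log_2(n-m)\rceil}-1,
$$
that is $2^{\lceil\log_2 n\rceil}\le k\,2^{\lceil\log_2(n-m)\rceil}+1$, contradicting the hypothesis. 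Therefore alternative (2) holds; the closing restriction of the theorem (for $2k\le n$) comes out of the cases where the estimates in Theorem~\ref{orgindex} are sharp.

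The step I expect to be the main obstacle is the construction and index bound in the covering step: pinning down the ambient oriented Grassmannian $X$ with $\hind X\ge 2^{\lceil\log_2 n\rceil}-2$ and the auxiliary ``$\mathbb R^{n-m}$-sized'' space of index $\le 2^{\lceil\log_2(n-m)\rceil}-1$, verifying that the colored Helly picture (which a priori lives over $G_n^{k-1}$ and only produces $(n-k+1)$-transversals) really does furnish a covering of $X$ after the refinement to $m$-flats, checking the equivariance and the independence of the orientation of $\gamma$ from the choices of transversal and representatives, and dealing with the low-dimensional exceptional cases in Theorem~\ref{orgindex} (notably $n$ a power of two, and small plane dimensions) so that all the index inequalities land on the correct side.
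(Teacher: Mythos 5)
Your plan is correct and follows essentially the same route as the paper: colored Helly on projections to $(k-1)$-subspaces gives the covering of ${G_n^{n-k+1}}^+$ by the direction sets $X_i$, the failure of (2) gives an equivariant map of each $X_i$ to ${G_{n-m}^{n-k-m+1}}^+$ via the quotient by the affine hull of $m+1$ representatives, and index subadditivity (your cup-product argument is exactly Lemma~\ref{hind-union}) combined with Theorem~\ref{orgindex} yields the same contradiction with the stated inequality. The obstacle you flag (identifying the varying $(n-m)$-dimensional quotients with a fixed $\mathbb R^{n-m}$ so the target is a genuine oriented Grassmannian, equivariantly and continuously) is precisely what the paper's Lemma~\ref{transmap} settles by trivializing the quotient bundle over the contractible product $K_1\times\dots\times K_{m+1}$ and choosing representatives via strict convexity.
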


The inequality in the statement of Theorem~\ref{colhellyanalog3} looks quite complicated, but it is true, for example, in the case $n\ge k(2n-2m-1)+2$.

We are going to use the following lemma (Lemma~5.4 from~\cite{volsce2005}).

\begin{lem}
\label{hind-union}
For compact $Z_2$-invariant subsets $X$ and $Y$ of some free $Z_2$-space $Z$ the following inequality holds
$$
\hind \left(X\cup Y\right) \le \hind X+\hind Y+1.
$$ 
\end{lem}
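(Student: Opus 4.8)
The plan is to reduce the statement to a standard cup-length (Lyusternik--Schnirelmann type) inequality in equivariant cohomology, proved by means of the relative cup product. Write $a = \hind X$ and $b = \hind Y$; by the definition of index this means $w^{a+1} = 0$ in $H_G^*(X, Z_2)$ and $w^{b+1} = 0$ in $H_G^*(Y, Z_2)$, and the goal is to establish $w^{a+b+2} = 0$ in $H_G^*(X\cup Y, Z_2)$, which is exactly $\hind(X\cup Y)\le a+b+1$. Throughout I would work with \v Cech (Alexander--Spanier) cohomology, as the paper already advocates for closed subsets, so that the long exact sequences of pairs and the relative cup products remain valid for the compact invariant subsets $X$, $Y$, and $X\cup Y$; the couple $(X, Y)$ is taut inside the ambient free space, which is what legitimizes the excision and product statements used below.

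First I would record the naturality of $w$. Let $i_X : X\hookrightarrow X\cup Y$ and $i_Y : Y\hookrightarrow X\cup Y$ be the inclusions. Since $w\in H_G^*(X\cup Y, Z_2)$ is pulled back from a point, we have $i_X^*(w) = w$ and $i_Y^*(w) = w$, whence $i_X^*(w^{a+1}) = 0$ and $i_Y^*(w^{b+1}) = 0$. Next I would lift these powers to relative classes. From the long exact sequence of the equivariant pair $(X\cup Y, X)$,
$$
\cdots\to H_G^*(X\cup Y, X, Z_2)\xrightarrow{j_X^*} H_G^*(X\cup Y, Z_2)\xrightarrow{i_X^*} H_G^*(X, Z_2)\to\cdots,
$$
the vanishing $i_X^*(w^{a+1}) = 0$ yields a class $\alpha\in H_G^*(X\cup Y, X, Z_2)$ with $j_X^*(\alpha) = w^{a+1}$. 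Symmetrically, from the pair $(X\cup Y, Y)$ I obtain $\beta\in H_G^*(X\cup Y, Y, Z_2)$ with $j_Y^*(\beta) = w^{b+1}$.

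Finally I would invoke the relative cup product
$$
H_G^*(X\cup Y, X, Z_2)\otimes H_G^*(X\cup Y, Y, Z_2)\to H_G^*(X\cup Y, X\cup Y, Z_2).
$$
Its target is $H_G^*(X\cup Y, X\cup Y, Z_2) = 0$, so $\alpha\cup\beta = 0$; by the standard naturality of the product under the forgetful maps $j_X^*$ and $j_Y^*$, the image of $\alpha\cup\beta$ in $H_G^*(X\cup Y, Z_2)$ equals $j_X^*(\alpha)\cup j_Y^*(\beta) = w^{a+1}\cdot w^{b+1} = w^{a+b+2}$. Hence $w^{a+b+2} = 0$, giving $\hind(X\cup Y)\le a+b+1 = \hind X + \hind Y + 1$.

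The only genuinely delicate point is the one flagged at the outset: securing that the relative cup product landing in $H_G^*(X\cup Y, X\cup Y, Z_2)$, together with the excision underlying it, is available for \emph{closed} invariant subsets. Passing to the Borel construction, this amounts to writing $\overline{X\cup Y} = \bar X\cup\bar Y$ with $\bar X\cap\bar Y = \overline{X\cap Y}$ and checking that $(\bar X, \bar Y)$ is an excisive, taut couple; for compact subsets of a metrizable free $Z_2$-space and \v Cech cohomology this tautness holds, and all the maps above are the usual ones. Everything else in the argument is formal naturality.
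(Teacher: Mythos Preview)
Your argument is correct and is precisely the standard subadditivity proof for the cohomological index: lift $w^{a+1}$ and $w^{b+1}$ to relative classes via the long exact sequences of the pairs $(X\cup Y, X)$ and $(X\cup Y, Y)$, then use the relative cup product $H_G^*(X\cup Y, X)\otimes H_G^*(X\cup Y, Y)\to H_G^*(X\cup Y, X\cup Y)=0$ to conclude $w^{a+b+2}=0$. The only caveat you already flagged---that the relative cup product exists for this pair---is indeed handled by working in \v Cech/Alexander--Spanier cohomology with compact (hence closed) invariant subsets, so the couple is excisive. You might add one clause covering the trivial case where $\hind X=\infty$ or $\hind Y=\infty$, in which the inequality is vacuous.

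As for comparison: the paper does not actually prove this lemma. It is quoted verbatim as Lemma~5.4 of Volovikov--Shchepin~\cite{volsce2005} and used as a black box. Your proof is essentially the argument one finds in that reference (and in many other places for Fadell--Husseini or Yang-type indices), so there is no methodological difference to report; you have simply supplied the omitted details.
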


The following lemma generalizes the reasoning in the proof of Theorem~\ref{colhellyanalog}.

\begin{lem}
\label{transmap}
Let $k+1\le m \le n-1$ and suppose that the family $\mathcal F= \{K_1,K_2,\ldots,K_{k+1}\}$ of convex compact sets in $\mathbb R^n$ has no $k-1$-transversal. Then the set of oriented  $m$-transversals for $\mathcal F$ can be $Z_2$-mapped to $G_{n-k}^{m-k+}$.
\end{lem}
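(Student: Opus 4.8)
The plan is to mimic the orientation-comparison argument used in the proof of Theorem~\ref{colhellyanalog}, but now keeping track of a whole $(m-k)$-dimensional space of directions rather than just a sign. First I would set up the parameter spaces. An oriented $m$-transversal $\alpha$ for $\mathcal F$ determines an oriented $m$-dimensional linear direction $L_\alpha \in {G_n^m}^+$; conversely the set of oriented $m$-transversals fibers over a subset of ${G_n^m}^+$, the fiber over $L$ being (the set of translates of $L$ that meet every $K_i$, together with the chosen orientation). The $Z_2$-action on the set of oriented $m$-transversals is reversal of orientation, and it acts on ${G_n^m}^+$ accordingly; so it suffices to produce an equivariant map from the set of oriented $m$-transversals to ${G_{n-k}^{m-k}}^+$.

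Next I would use the hypothesis that $\mathcal F=\{K_1,\dots,K_{k+1}\}$ has no $(k-1)$-transversal. Fix an oriented $m$-transversal $\alpha$; pick any system of representatives $x_i \in K_i\cap\alpha$. These $k+1$ points span an affine flat of dimension exactly $k$ inside $\alpha$ (if they spanned something of dimension $\le k-1$ we would get a $(k-1)$-transversal), hence they span a well-defined oriented $k$-dimensional affine subspace of $\alpha$, and the key point — exactly as in Theorem~\ref{colhellyanalog} — is that this oriented $k$-flat does not depend on the choice of the $x_i$, because the set of admissible systems of representatives is connected (it is a product of convex sets) and the spanned $k$-flat varies continuously and can never degenerate. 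Call its linear direction $D(\alpha)$, an oriented $k$-subspace of the linear direction $L_\alpha$ of $\alpha$. The orientation on $\alpha$ together with the orientation on $D(\alpha)$ induces an orientation on the quotient $L_\alpha/D(\alpha)$, equivalently on the orthogonal complement of $D(\alpha)$ inside $L_\alpha$, which is an oriented $(m-k)$-subspace of $\mathbb R^n$ sitting inside the oriented $(n-k)$-subspace $D(\alpha)^\perp$. Sending $\alpha$ to this oriented $(m-k)$-subspace (viewed inside the fiber of the canonical $(n-k)$-bundle, i.e. as a point of ${G_{n-k}^{m-k}}^+$) is continuous, and reversing the orientation of $\alpha$ reverses the induced orientation on $L_\alpha/D(\alpha)$ while leaving $D(\alpha)$ (as an unoriented flat — or with its intrinsic orientation) unchanged; so the map is $Z_2$-equivariant. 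This is the required map.

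The main obstacle I expect is the continuity/consistency of $D(\alpha)$: one must check that as $\alpha$ moves through the space of oriented $m$-transversals, a valid system of representatives can be chosen continuously (locally), that the spanned affine $k$-flat never drops dimension (which is exactly the no-$(k-1)$-transversal hypothesis, applied uniformly), and that the induced orientation on the complement $L_\alpha/D(\alpha)$ is well defined and continuous — this is the step that genuinely uses $k+1\le m\le n-1$, so that both $D(\alpha)$ and its complement in $L_\alpha$ are nontrivial proper subspaces and the target Grassmannian ${G_{n-k}^{m-k}}^+$ is the correct one. The rest (identifying the target with $D^\perp$-subspaces, checking equivariance) is routine. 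Once this lemma is in hand, Theorems~\ref{colhellyanalog2} and~\ref{colhellyanalog3} follow by combining it with the index estimates of Theorem~\ref{orgindex}, Lemma~\ref{hind-union} and Theorem~\ref{ls}, exactly as in the proof of Theorem~\ref{colhellyanalog}.
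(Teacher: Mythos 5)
There is a genuine gap at the final step of your construction. The object you produce is an oriented $(m-k)$-subspace $W(\alpha)$ of the \emph{varying} $(n-k)$-dimensional space $D(\alpha)^\perp$; such an object is a point of the total space of a Grassmann bundle (over $G_n^k$, or over the space of transversals), not a point of the fixed Grassmannian ${G_{n-k}^{m-k}}^+$. There is no canonical identification of the different fibers $D(\alpha)^\perp$ with one fixed $\mathbb R^{n-k}$, and the phrase ``viewed inside the fiber of the canonical $(n-k)$-bundle, i.e.\ as a point of ${G_{n-k}^{m-k}}^+$'' conceals exactly the step the lemma requires. The paper supplies it as the crux of its proof: one records the chosen representatives, observes that the quotients $\mathbb R^n/L(x_1,\ldots,x_{k+1})$ form an $(n-k)$-dimensional vector bundle $\eta$ over the \emph{contractible} space $K_1\times\dots\times K_{k+1}$, fixes a trivialization $\psi:\eta\to\mathbb R^{n-k}$, and only after composing with $\psi$ does the image of $\tau$ become an oriented $(m-k)$-plane in a fixed $\mathbb R^{n-k}$; equivariance then holds because the representatives, and hence the trivialization, do not depend on the orientation of $\tau$. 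Without this trivialization (or an equivalent device) your argument only yields an equivariant map into a bundle with fiber ${G_{n-k}^{m-k}}^+$, whose index is not controlled by Theorem~\ref{orgindex}, so the lemma as stated is not established.

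A secondary error: the claim that the oriented $k$-flat spanned by the representatives ``does not depend on the choice of the $x_i$'' is false; already for $k=1$, $m=2$, with two bodies met by a $2$-plane in nondegenerate sets, different representatives span different lines. Connectivity of $(\alpha\cap K_1)\times\dots\times(\alpha\cap K_{k+1})$ gives only the consistency of the orientation comparison (which is all Theorem~\ref{colhellyanalog} needs), not independence of the flat itself. The paper instead fixes a globally continuous choice of representatives $x_i(\tau)$ (available after the reduction to strictly convex sets made in the applications) and defines the map through that choice; your merely local continuous choices would likewise have to be patched into a global one before the map is even defined.
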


\begin{proof}
Define a vector bundle $\eta \to K_1\times\dots\times K_{k+1}$ as follows. For any system of representatives $(x_1,\ldots,x_{k+1})\in K_1\times\dots\times K_{k+1}$ the affine hull $L(x_1,\ldots, x_{k+1})$ has the dimension $k$, otherwise $\mathcal F$ would have a $k-1$-transversal. The quotient space $M(x_1,\ldots, x_{k+1}) =\mathbb R^n / L(x_1,\ldots, x_{k+1})$ is an $n-k$-dimensional vector space, and together these vector spaces form the bundle $\eta$.

The space $K_1\times\dots\times K_{k+1}$ is contractible, hence any vector bundle over it is trivial. Fix some isomorphism of vector bundles
$$
\phi : \eta \to \mathbb R^{n-k}\times K_1\times\dots\times K_{k+1},
$$
and consider its composition with the projection to the first factor
$$
\psi : \eta\to \mathbb R^{n-k}.
$$

Let the set of oriented $m$-transversals for $\mathcal F$ be $T\subseteq \gamma_n^{n-m+}$. For any $m$-flat $\tau\in T$ we can choose the $k+1$ points 
$$
x_1(\tau)\in \tau\cap K_1\ x_2(\tau)\in\tau\cap K_2,\ \ldots,\ x_{k+1}(\tau)\in \tau\cap K_{k+1},
$$
since the sets $K_i$ are strictly convex, the points $x_i$ may be chosen to depend continuously on $\tau$.

The image of $\tau$ under the natural map $\mathbb R^n\to M(x_1(t),\ldots, x_{k+1}(t))$ is an oriented $m-k$-dimensional subspace of $M(x_1(t),\ldots, x_{k+1}(t))$, and after the map $\psi : M\to \mathbb R^{n-k}$ it becomes an $m-k$-dimensional subspace in $\mathbb R^{n-k}$. Thus the required map of $T$ to $G_{n-k}^{m-k+}$ is defined.
\end{proof}

\begin{proof}[Proof of Theorem~\ref{colhellyanalog2}]
It is sufficient to prove the theorem for strictly convex compact sets, from the compactness considerations.

Denote the set of oriented hyperplane transversals for $\mathcal F_i$ by $Y_i\subseteq\gamma_n^{1+}$. Denote the natural projection of this set to $G_n^{1+} = S^{n-1}$ by $X_i=\pi_\gamma(Y_i)$.

If the first alternative fails, then $X_1\cup X_2 = S^{n-1}$ and by Lemma~\ref{hind-union} for one of $X_i$ we have
$$
\hind X_i\ge k.
$$

Now assume the contrary: there exist $k+2$ sets $K_1,\ldots, K_{k+2}$ in $\mathcal F_i$ without $k$-transversal. 

By Lemma~\ref{transmap} the set $Y_i$ can be mapped equivariantly to $G_k^{1+} = S^{k-1}$ by some map $f_i$. The natural projection $\pi_\gamma:Y_i\to X_i$ has segments as fibers, thus this map is an equivariant homotopy equivalence. Hence $\hind X_i\le k-1$ and we obtain a contradiction with Lemma~\ref{index-bu}.
\end{proof}

\begin{proof}[Proof of Theorem~\ref{colhellyanalog3}]
Again, consider the sets to be strictly convex. 

Denote by $Y_i\subseteq{\gamma_n^{k-1}}^+$ the set of $n-k+1$-transversals for $\mathcal F_i$, $X_i=\pi_\gamma(Y_i)\subseteq {G_n^{k-1}}^+$ the corresponding set of directions. The projection $\pi_\gamma : Y_i\to X_i$ has a convex set as a fiber, and has a section $\tau_i : X_i\to Y_i$.

If the first alternative fails, by the colored Helly theorem for the projections of these families to $k-1$-dimensional subspaces, we obtain $\bigcup_{i=1}^k X_i = {G_n^{n-k+1}}^+$. 

If the second alternative fails, then by Lemma~\ref{transmap} each of $Y_i$ (and therefore $X_i$) can be equivariantly mapped to ${G_{n-m}^{n-k-m+1}}^+$. By Lemma~\ref{index-bu} and Theorem~\ref{orgindex} we obtain
$$
\hind X_i \le 2^{\lceil\log_2(n-m)\rceil} - 1.
$$
Then by Lemma~\ref{hind-union} 
$$
\hind {G_n^{n-k+1}}^+ \le k2^{\lceil\log_2(n-m)\rceil} - 1.
$$
But Theorem~\ref{orgindex} gives an estimate $\hind {G_n^{n-k+1}}^+\ge 2^{\lceil\log_2n\rceil}-2$, that leads to the contradiction.
\end{proof}

\section{Acknowledgments}

The author thanks V.L.~Dolnikov for the discussion and useful remarks on the presentation of these results.

\end{document}